\theoremstyle{plain}
\newtheorem{theorem}{Theorem}
\numberwithin{theorem}{section}
\newtheorem{lemma}[theorem]{Lemma}
\newtheorem{proposition}[theorem]{Proposition}
\theoremstyle{definition}
\newtheorem{example}[theorem]{Example}
\newtheorem{question}[theorem]{Question}
\newtheorem{remark}[theorem]{Remark}
\newcommand{\C}{{\mathbb C}}
\newcommand{\R}{{\mathbb R}}
\newcommand{\Z}{{\mathbb Z}}
\newcommand{\Q}{{\mathbb Q}}
\renewcommand{\P}{{\mathbb P}}
\newcommand{\s}{{\mathbb S}}
              \newcommand{\M}{{\mathcal M}}
\begin{document}
\title{$G$-minimality and invariant negative spheres in $G$-Hirzebruch surfaces}
\author{Weimin Chen}
\subjclass[2000]{Primary 57R57, Secondary 57S17, 57R17}
\keywords{Four-manifolds, finite group actions, minimality, invariant  two-spheres, 
equivariant Gromov-Taubes invariant, rational $G$-surfaces.}
\thanks{The author is partially supported by NSF grant DMS-1065784}
\date{\today}
\maketitle

\begin{abstract}
In this paper a study of $G$-minimality, i.e., minimality of four-manifolds equipped with an
action of a finite group $G$, is initiated. We focus on cyclic actions on $\C\P^2\# \overline{\C\P^2}$, 
and our work shows that even in this simple setting, the comparison of $G$-minimality in the 
various categories, i.e., locally linear, smooth, and symplectic, is already delicate and interesting. 
For example, we show that if a symplectic $\Z_n$-action on $\C\P^2\# \overline{\C\P^2}$ has an invariant locally linear topological $(-1)$-sphere, then it must admit an invariant symplectic $(-1)$-sphere, provided that $n=2$ or $n$ is odd. For the case where $n>2$ and even, the same conclusion holds under a stronger assumption, i.e., the invariant $(-1)$-sphere is smoothly embedded. Along the way of these proofs we develop certain techniques for producing embedded invariant $J$-holomorphic two-spheres of self-intersection $-r$ under a weaker assumption of an invariant smooth $(-r)$-sphere for $r$ relatively small compared with the group order $n$. We then
apply the techniques to give a classification of $G$-Hirzebruch surfaces (i.e., Hirzebruch
surfaces equipped with a homologically trivial, holomorphic $G=\Z_n$-action) up to orientation-preserving equivariant diffeomorphisms. The main issue of the classification is to distinguish 
non-diffeomorphic $G$-Hirzebruch surfaces which have the same fixed-point set structure.
An interesting discovery is that these non-diffeomorphic $G$-Hirzebruch surfaces
have distinct equivariant Gromov-Taubes invariant, giving the first examples of such kind. 
Going back to the original question of  $G$-minimality, we show that for $G=\Z_n$, a minimal rational $G$-surface is minimal as a symplectic $G$-manifold if and only if it is minimal as a smooth $G$-manifold. The paper also contains some discussions about existence of an invariant piecewise 
linear $(-1)$-sphere and equivariant decompositions along a homology three-sphere for cyclic actions on $\C\P^2\# \overline{\C\P^2}$.
\end{abstract}

\section{Introduction}

Minimality is a basic concept in four-manifold topology. An oriented smooth four-manifold is called
minimal if it does not contain any smoothly embedded two-spheres of self-intersection $-1$; such a
two-sphere is called a $(-1)$-sphere. If a four-manifold $X$ contains a $(-1)$-sphere, then $X$ is 
naturally diffeomorphic to a connected sum $X^\prime \# \overline{\C\P^2}$, where $X^\prime$ is 
called the blowdown of $X$ along the $(-1)$-sphere. One can simplify a non-minimal four-manifold 
through a sequence of blowdowns until one gets a minimal four-manifold, and it is a fundamental 
question to understand how a four-manifold and its blowdowns are related, e.g., how their gauge 
theoretic invariants are related (cf. \cite{FS1, FS2}). When the four-manifolds and the $(-1)$-spheres 
in question are complex analytic or symplectic, the blowdown operation can be carried out in the corresponding category, giving naturally the notion of minimality in the complex analytic or the 
symplectic category (cf. \cite{McS1}).
It is well-known that there are complex surfaces which are minimal in the complex analytic 
category but not minimal as smooth four-manifolds, i.e., the Hirzebruch surfaces $F_r$ where 
$r$ is odd and $r>1$ ($F_r$ is diffeomorphic to $\C\P^2 \# \overline{\C\P^2}$ when $r$ is odd). 
On the other hand, thanks to the deep work of Taubes on the equivalence of Seiberg-Witten and 
Gromov invariants of symplectic four-manifolds, the notions of minimality in the symplectic and smooth categories are equivalent (cf. \cite{Taubes, LL0, Li}), and this has many important 
consequences in four-manifold topology.

The notion of minimality can be extended naturally to the equivariant setting. An algebraic 
surface together with a finite automorphism group $G$ is called a minimal $G$-surface if it can not
be blown down equivariantly. (Minimal rational $G$-surfaces 
played a fundamental role in the modern approach to the classical problem of classifying 
finite subgroups of the plane Cremona group, i.e., the group of birational transformations 
of the projective plane, cf. \cite{DI}.) One can similarly define the notion 
of symplectic minimality or smooth minimality in the equivariant setting. To be more concrete, a 
symplectic four-manifold with a symplectic $G$-action (resp. an oriented smooth four-manifold 
with an orientation-preserving smooth $G$-action) is called minimal if there does not exist any 
$G$-invariant set of disjoint union of symplectic (resp. smooth) $(-1)$-spheres; clearly one can 
blow down the $G$-manifold equivariantly if such a $G$-invariant set of $(-1)$-spheres exists. 
It is known that there are minimal $G$-Hirzebruch surfaces which 
are not minimal as smooth $G$-manifolds (cf. \cite{Wil2}), so whether the notion of $G$-minimality 
is the same in the symplectic and smooth categories becomes a natural question. (We shall restrict ourselves to the situation where the $G$-invariant set of
$(-1)$-spheres can be consistently oriented such that the corresponding homology classes are preserved under the $G$-action; this more restrictive assumption is automatically satisfied in the symplectic or complex analytic category.)

\vspace{1.5mm}

First of all, for a smooth four-manifold with $b_2^{+}\geq 1$ which is not diffeomorphic 
to a rational surface, the notions of minimality in all three categories (i.e., complex analytic, symplectic, or smooth) are equivalent. This continues to be true in the equivariant setting provided that the underlying manifold is also not ruled. More precisely,

\vspace{1.5mm}

\noindent{\bf Theorem 1.0.} \hspace{1mm} 
{\it Let $X$ be a complex surface with $b_2^{+}\geq 1$ or a symplectic four-manifold, which is equipped with a holomorphic or symplectic $G$-action for a finite group $G$. Suppose $X$ is 
neither rational nor ruled. Then $X$ is minimal as a complex $G$-surface or a symplectic $G$-manifold if and only if the underlying manifold $X$ is minimal as a smooth four-manifold. 
}
\vspace{1.5mm}

We remark that Theorem 1.0 is not true without the assumption that $X$ is not rational nor ruled.
In fact, there are minimal rational $G$-surfaces whose underlying smooth four-manifolds 
are not minimal (cf. \cite{DI}). For counter-examples which are non-rational ruled, consider surfaces 
which are a double branched cover of $\C\P^1\times \Sigma$ whose branch locus is a connected, nonsingular bisection of the ruled surface (here $\Sigma$ has genus $\geq 1$). With respect to the natural automorphism of order $2$, such surfaces are minimal $G$-surfaces (with $G=\Z_2$). However, the underlying surfaces are not minimal. A proof of Theorem 1.0 is given in Section 6.

With the preceding understood, as far as the comparison of $G$-minimality is concerned, the only interesting case is when the underlying four-manifold is rational or ruled. While finite automorphism groups of rational surfaces have been studied extensively in connection with the plane Cremona group (cf. \cite{DI}), general symplectic finite group actions on a rational or ruled four-manifold 
remain largely unexplored except for the case of $\C\P^2$ (cf. \cite{C3,C,C2,C1}); see also \cite{CLW}. 

In this paper, we shall take an initial step by focusing on the case of symplectic $G$-actions on 
$\C\P^2\# \overline{\C\P^2}$ where $G=\Z_n$ is a cyclic group of order $n$. Our work shows 
that even in the simple setting of $\C\P^2\# \overline{\C\P^2}$, the comparison of $G$-minimality in the various categories is already quite delicate and interesting, which, technically speaking, 
amounts to showing the existence of embedded, $G$-invariant $J$-holomorphic two-spheres under a weaker assumption of a $G$-invariant two-sphere. Moreover, we are able to apply the techniques we developed, in the guise of equivariant Gromov-Taubes invariants, to complete the smooth classification of $G$-Hirzebruch surfaces, where one encounters the problem of distinguishing non-diffeomorphic $G$-manifolds which have isomorphic fixed-point set data (cf. \cite{Wil2}).
We should point out that these examples are the first ones of non-free smooth finite group 
actions on a four-manifold whose equivariant diffeomorphism types are distinguished by
an equivariant gauge theory type invariant. The traditional technique has been through the 
knotting of the $2$-dimensional fixed-point set (cf. \cite{Gif, Go, HLM, FSS, KR}); however, 
the $G$-Hirzebruch surfaces distinguished by our method are not distinguishable by the 
traditional method (see Remark 5.4 for more details). In this sense our work has also broken 
new ground for this type of problems. We plan to explore further in this direction on a future occasion. 
(The technique for distinguishing free actions, which amounts to distinguishing the corresponding quotient manifolds, depends on whether the actions are orientation reversing or preserving.
In the former case, it is non-gauge theoretic, see \cite{CS, FS0, Gompf1, Ak, Gompf2}, and in the
latter case, it is gauge theoretic, see \cite{Ue}.)

Before stating our theorems, we remark that due to Freedman's topological surgery theory
(cf. \cite{Fr}), the question of minimality is trivial in the topological category (at least for simply 
connected four-manifolds). However, this is no longer the case for $G$-minimality. This said, 
we shall also consider $G$-minimality in the topological category. More concretely, let $X$ be an
oriented topological $4$-manifold equipped with an orientation-preserving, locally linear 
$G$-action. A $G$-invariant, topologically embedded surface $\Sigma\subset X$ is called locally linear if for any $z\in\Sigma$, there is a $G_z$-invariant neighborhood $U_z$ of $z$ in $X$ such 
that $(U_z,U_z\cap\Sigma)$ is equivariantly homeomorphic to $(\R^4,\R^2)$ with a linear 
$G_z$-action (cf. Lashof-Rothenberg \cite{LR}, p. 227); in particular, $\Sigma$ is locally flat.
It follows easily from Freedman-Quinn (cf. \cite{FQ}, Theorem 9.3A, p. 137) that a $G$-invariant 
locally linear surface has a $G$-equivariant normal bundle and hence a $G$-invariant regular
neighborhood given by the corresponding disc bundle. With this understood, if $X$ contains 
a $G$-invariant set of disjoint union of locally linear $(-1)$-spheres, then one can blow down 
$X$ equivariantly in the category of locally linear topological $G$-manifolds. We say $X$ is 
minimal as a topological $G$-manifold if no such a $G$-invariant set of disjoint union of 
locally linear $(-1)$-spheres exists in $X$. 

\begin{theorem}
Let $G=\Z_n$ be a finite cyclic group of order $n$ where either $n=2$ or $n$ is odd.
Suppose a smooth $G$-action on $X=\C\P^2\# \overline{\C\P^2}$ admits a $G$-invariant, 
locally linear $(-1)$-sphere. Let $\omega$ be any given $G$-invariant symplectic form. Then for any 
generic $G$-invariant $\omega$-compatible almost complex structure $J$, there is a $G$-invariant, 
$J$-holomorphic $(-1)$-sphere in $X$. In particular, $X$ contains a $G$-invariant, $\omega$-symplectic $(-1)$-sphere. 
\end{theorem}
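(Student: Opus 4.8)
The plan is to split the argument into a soft homological reduction, an automatic-invariance observation coming from positivity of intersections, and a single genuinely analytic existence statement, which is where all the work lies.

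First I would identify the relevant class. Writing $H_2(X;\Z)=\Z H\oplus\Z E$ with $H^2=1$, $E^2=-1$, $H\cdot E=0$, so that $-K_X=3H-E$, one checks directly that $E$ is the \emph{unique} class $\alpha$ with $\alpha^2=-1$ and $-K_X\cdot\alpha=1$. Hence any $(-1)$-sphere, in particular the given invariant locally linear one, represents $\pm E$, and after normalizing orientations I take the class to be $E$ with $\omega\cdot E>0$. Since an orientation-preserving (in particular symplectic) $G$-action preserves the intersection form and $c_1$, we have $g_*E=E$ for every $g\in G$, so $E$ is a $G$-invariant class.

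Next I would record the observation that makes invariance automatic. For any $\omega$-compatible $J$, positivity of intersections forbids two distinct irreducible $J$-holomorphic curves in class $E$, as they would have intersection number $E\cdot E=-1<0$; thus the moduli space $\M_E(J)$ of embedded $J$-holomorphic spheres in class $E$ contains at most one element. If $J$ is $G$-invariant, then for each $g\in G$ the image $g(C)$ of a representative $C$ is again an embedded $J$-holomorphic sphere in class $g_*E=E$, so uniqueness forces $g(C)=C$ and $C$ is $G$-invariant. This reduces the theorem to the single assertion that $\M_E(J)\neq\emptyset$ for generic $G$-invariant $J$; being $J$-holomorphic, such a sphere is automatically $\omega$-symplectic.

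The existence assertion is the heart of the matter. Non-equivariantly one has $\mathrm{Gr}(E)=1$, and the expected dimension of $\M_E(J)$ is $2(-K_X\cdot E-1)=0$, so for generic (not necessarily invariant) $J$ the class $E$ is cut out as a single regular point. The task is to transport this to $G$-invariant $J$. The delicate feature is that a $G$-invariant representative must meet the fixed-point set of the action — the induced $\Z_n$-action on the invariant sphere is either a rotation with two fixed points or trivial on the sphere — so the curve carries nontrivial isotropy and the usual somewhere-injective transversality scheme does not apply verbatim. I would therefore set up the equivariant Cauchy--Riemann problem, compute the expected dimension of the invariant moduli space by localizing the $G$-index at the fixed points, and use local linearity to pin down the isotropy representations (the rotation numbers in the tangent and normal directions), which are further constrained by $E^2=-1$. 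The plan is to show that for these rotation data the invariant part of the cokernel vanishes, so a generic $G$-invariant $J$ achieves transversality and the invariant count agrees with the ordinary count; a $G$-equivariant Gromov-compactness argument applied to the connected, genus-zero limiting configuration, combined with the homological constraints above, then excludes degeneration of $E$ into a reducible invariant configuration, leaving an embedded invariant sphere.

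The main obstacle is precisely this equivariant transversality-and-compactness step, and it is here that the hypothesis $n=2$ or $n$ odd enters: only for these orders are the admissible rotation numbers at the fixed points, as determined by $E^2=-1$ and mere local linearity, compatible with vanishing of the invariant obstruction (relatedly, for such $n$ the locally linear invariant sphere can be promoted to a smoothly embedded invariant one, a promotion that fails for $n>2$ even, which is exactly why that case must assume a smooth invariant $(-1)$-sphere outright). Verifying the vanishing of the invariant cokernel for the allowed rotation numbers and ruling out equivariant bubbling are the two points I expect to demand the most care.
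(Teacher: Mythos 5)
Your soft reductions are exactly the paper's (Lemma 2.1 pins down the class $e_1$, and the positivity-of-intersections argument for automatic $G$-invariance is the remark closing Section 2), but the existence step as you have set it up contains two genuine gaps. First, an equivariant transversality scheme cannot by itself produce a curve: you need an existence input valid for \emph{every} invariant $J$, and you never supply one. The paper's input is Taubes' SW$\Rightarrow$Gr together with a wall-crossing computation (Lemma 2.2) giving $SW_X(e_1)=\pm 1$ and $SW_X(F)=\pm 1$, so that $e_1$ is represented by a possibly reducible $J$-holomorphic curve for an arbitrary $J$. Second, and more fatally, your plan to exclude invariant reducible degenerations of $E$ via ``homological constraints'' and equivariant Gromov compactness cannot succeed: the decomposition $e_1=C_0+kF$, with $C_0$ an invariant section of self-intersection $-(2k+1)$, is perfectly consistent homologically and genuinely occurs for invariant $J$ --- e.g.\ the integrable structures realizing the minimal $G$-Hirzebruch surfaces $F_r$, $r>1$ odd, on $\C\P^2\#\overline{\C\P^2}$, which is exactly why the paper stresses that the conclusion fails for non-generic invariant $J$. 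The paper's route is structural: Lemma 2.3 shows that for \emph{any} $J$ either $e_1$ has an embedded representative or there is a $J$-holomorphic fibration in class $F$ with an invariant section $C_0$ of odd negative self-intersection; genericity of invariant $J$ is then invoked not for the class $e_1$ but at $C_0$, through the orbifold index inequality $d\geq 0$ for $C_0/G\subset X/G$, which forces $C_0^2\in\{-1,\,-n+2a+1,\,-2a-1+n\}$. The hypothesis $n=2$ or $n$ odd enters as a \emph{parity} argument killing the last two options (since $C_0^2$ must be odd), not as vanishing of an invariant cokernel.

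There is also a gap in how you use local linearity. The invariant sphere gives you the rotation numbers only at the two fixed points lying \emph{on} it; but the section $C_0$ may pass through the other two fixed points, and determining the rotation numbers there is the nontrivial step where the locally linear hypothesis actually does its work: in the pseudo-free case via integrality of the equivariant Seiberg--Witten index $d(E)$ of an equivariant line bundle built from a regular neighborhood of $C$ (Lemmas 3.2 and 3.3; the regular neighborhood exists by Freedman--Quinn), and in the non-pseudo-free case via intersection theory of locally flat surfaces against the two-dimensional fixed components (Lemma 4.1). Your sketch does not anticipate this, and without it the cases $C_0^2=-n+2a+1$ and $C_0^2=-2a-1+n$ cannot even be formulated, let alone excluded. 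Finally, your parenthetical claim that for $n=2$ or $n$ odd the locally linear invariant sphere ``can be promoted'' to a smoothly embedded invariant one is unsupported and is not how the paper proceeds: no such promotion is proved anywhere (were it available, Theorem 1.1 would be an immediate corollary of Theorem 1.2, yet the paper gives independent arguments and explicitly leaves the relationship between the locally linear and smooth hypotheses open).
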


Since a symplectic $\Z_n$-action on $\C\P^2$ is equivariantly diffeomorphic to a linear action 
(cf. \cite{C, C3}), Theorem 1.1 has the following corollary, where the case of pseudo-free 
holomorphic actions can be also deduced from Theorem 4.14 in \cite{Wil2}.

\vspace{1.5mm}

{\it A symplectic $\Z_n$-action on $\C\P^2\# \overline{\C\P^2}$, for $n=2$ or odd, is 
equivariantly diffeomorphic to an equivariant connected sum of a pair of linear actions on $\C\P^2$ 
and $\overline{\C\P^2}$ if and only if it admits an invariant, locally linear $(-1)$-sphere.
}
\vspace{1.5mm}

We remark that the existence of a $G$-invariant $J$-holomorphic $(-1)$-sphere asserted 
in Theorem 1.1 is quite a subtle issue. It is not true for a non-generic $J$, as there are 
examples of minimal $G$-Hirzebruch surfaces which are not minimal as smooth $G$-manifolds. 
Moreover, since the existence of a $G$-invariant locally linear $(-1)$-sphere imposes certain constraints on the representations of $G$ on the tangent spaces of the fixed points, which are not satisfied by a general symplectic (even holomorphic) $\Z_n$-action on $\C\P^2\# \overline{\C\P^2}$, Theorem 1.1 is not expected to be true for an arbitrary symplectic $\Z_n$-action. Finally, there are 
pairs of pseudo-free $G$-Hirzebruch surfaces with isomorphic local representations at the fixed 
points, such that exactly one of them contains a $G$-invariant locally linear $(-1)$-sphere (cf. \cite{Wil2}). This shows that in Theorem 1.1, one can not replace the assumption of existence of 
a $G$-invariant locally linear $(-1)$-sphere by any condition merely on the local representations at the fixed points (see Example 3.1 for more details). We remark that it is a consequence of the topological classification theorems of pseudo-free locally linear cyclic actions in \cite{Wil1, Wil2} that only one of the pseudo-free $G$-Hirzebruch surfaces in each such pair contains a $G$-invariant locally linear $(-1)$-sphere. Our approach in this paper offered an alternative proof of this fact, see Lemma 3.3 and Remark 3.4. 

For $\Z_n$-actions where $n>2$ and even, we need to impose a stronger 
assumption, i.e., the $G$-invariant $(-1)$-sphere is smoothly embedded. 
It would be interesting to know whether the smoothness condition is also necessary.

\begin{theorem}
Let $G=\Z_n$ be a cyclic group of order $n$. Suppose a smooth $G$-action on 
$X=\C\P^2\# \overline{\C\P^2}$ admits a $G$-invariant smooth
$(-1)$-sphere. Then for any $G$-invariant symplectic form $\omega$, there is a $G$-invariant,
$\omega$-symplectic $(-1)$-sphere in $X$. 
\end{theorem}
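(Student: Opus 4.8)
The plan is to separate cases according to the parity of $n$. For $n=2$ or $n$ odd the statement is immediate from Theorem 1.1, since a smoothly embedded sphere is in particular locally linear, so Theorem 1.1 already produces a $G$-invariant $\omega$-symplectic $(-1)$-sphere. I therefore concentrate on the genuinely new case $n>2$ even. Write $H_2(X;\Z)=\Z H\oplus\Z E$ with $H^2=1$, $E^2=-1$, $H\cdot E=0$. The only classes of square $+1$ (resp.\ $-1$) are $\pm H$ (resp.\ $\pm E$), so every isometry of the intersection lattice sends $H\mapsto\pm H$ and $E\mapsto\pm E$. Since $\omega$ is $G$-invariant we have $g_\ast[\omega]=[\omega]$ for all $g\in G$, and since the symplectic cone of $X$ pairs nontrivially with the exceptional class, I would orient the given invariant $(-1)$-sphere so that its class is $E$ with $[\omega]\cdot E>0$. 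The identity $[\omega]\cdot g_\ast E=[\omega]\cdot E$ (as $g_\ast$ is an isometry fixing $[\omega]$) then forces $g_\ast E=E$, so the action is homologically trivial.

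First I would isolate a uniqueness principle that carries most of the argument. Fix any $G$-invariant $\omega$-compatible almost complex structure $J$. If $C$ is an irreducible $J$-holomorphic sphere with $[C]=E$, then for each $g\in G$ the translate $g(C)$ is again $J$-holomorphic (as $J$ is $G$-invariant) and represents $g_\ast E=E$. Were $g(C)\neq C$, positivity of intersections would give $C\cdot g(C)\geq 0$, contradicting $C\cdot g(C)=E^2=-1$; hence $g(C)=C$ for every $g$, and $C$ is automatically $G$-invariant. Since $E$ is primitive, $C$ is somewhere injective, and the adjunction equality forces it to be embedded; being $J$-holomorphic it is then $\omega$-symplectic of square $-1$. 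Thus the theorem reduces to a single existence statement: for some generic $G$-invariant $J$ there is an irreducible $J$-holomorphic sphere in class $E$.

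This existence is the crux, and it is where I expect both the main difficulty and the necessity of the smoothness hypothesis. Non-equivariantly the Gromov invariant of $E$ equals $1$, so a generic non-invariant $J$ carries an embedded sphere in class $E$; but within the smaller family of $G$-invariant $J$ this can fail, as modelled by the minimal $G$-Hirzebruch surfaces $F_r$ ($r>1$ odd), where $E=[B]+[f]$ splits off the $G$-invariant configuration of the negative section $B$ (of square $-r$) together with a fiber $f$, and no invariant embedded $(-1)$-sphere exists. The plan is to set up an equivariant Gromov-Taubes invariant counting, with sign, the $G$-invariant $J$-holomorphic representatives of $E$ for generic invariant $J$, to prove its invariance under deformations of the invariant symplectic structure, and to show it is nonzero under our hypothesis. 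I would extract the nonvanishing from the given invariant smooth $(-1)$-sphere $\Sigma$: smoothness supplies a $G$-equivariant normal bundle, namely the degree $-1$ disc bundle, and hence controls the local $G$-representations at the fixed points lying on $\Sigma$, which is exactly what is needed both to evaluate the equivariant count against a standard linear blow-up model and to exclude the reducible degeneration of $E$.

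The hardest step, and the reason the case $n>2$ even requires the smooth rather than merely locally linear hypothesis, is precisely this nonvanishing and non-degeneration. For even $n>2$ the local representation data of a locally linear invariant $(-1)$-sphere can coincide with that of a minimal $G$-Hirzebruch surface, so the reducible splitting of $E$ cannot be ruled out from the fixed-point data alone; smoothness rigidifies the normal structure enough to run the equivariant gluing and index bookkeeping that forces an irreducible invariant representative of $E$ to persist for generic invariant $J$. Once such a representative exists, the uniqueness principle above upgrades it at no extra cost to the desired $G$-invariant $\omega$-symplectic $(-1)$-sphere.
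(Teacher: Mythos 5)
Your reductions are all sound and agree with the paper: smooth invariant spheres are locally linear, so $n=2$ and $n$ odd follow from Theorem 1.1; the action is homologically trivial; and an irreducible $J$-holomorphic representative of $E$ is automatically $G$-invariant and embedded by negativity of self-intersection and adjunction. But the entire substance of the theorem is your remaining ``existence'' step, and there your proposal is a program rather than a proof: you posit an equivariant Gromov--Taubes invariant, its deformation invariance, and a nonvanishing evaluation ``against a standard linear blow-up model,'' none of which you construct or compute --- and the paper pointedly does not take this route either, remarking in the introduction that the general theory of equivariant Gromov--Taubes invariants is still under construction. What the paper actually does avoids any global deformation-invariant count: for an \emph{arbitrary} invariant $J$, Taubes' theorem together with the structural Lemma 2.3 yields either an embedded invariant $J$-holomorphic $(-1)$-sphere outright, or an invariant $\s^2$-fibration with an invariant $J$-holomorphic section $C_0$ of odd negative square and $e_1=C_0+\sum_i m_iC_i$; then for a generic invariant $J$ the orbifold index formula and equivariant adjunction pin $C_0^2$ down to one of the three values $-1$, $-n+2a+1$, $n-2a-1$ according to which pair of fixed points lies on $C_0$, and the last two must be excluded. (Incidentally, your model degeneration on $F_r$ should read $E=[B]+\tfrac{r-1}{2}[f]$, not $[B]+[f]$, for general odd $r$.)

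Moreover, the one concrete mechanism you offer for why smoothness is needed is incorrect: you say smoothness ``supplies a $G$-equivariant normal bundle,'' but by Freedman--Quinn (as the paper notes in its introduction) a $G$-invariant \emph{locally linear} sphere already has an equivariant normal bundle, so this cannot distinguish the two hypotheses; and the paper's Example 3.1 (the pair $F_1(1,3)$, $F_{11}(3,1)$) shows that local representation data at the fixed points can never by itself decide the question. Where smoothness genuinely enters, in Section 4, is quite different: Lemma 4.2 shows that for $n>2$ even, every invariant $2$-plane of a linear $\Z_n$-action preserving a complex structure is complex, so the tangent planes of your smooth sphere $C$ at its two fixed points are $J$-invariant for \emph{every} invariant $J$; one then fixes an auxiliary $\hat{J}_0$ making $C$ pseudoholomorphic near those points and uses the Micallef--White local structure theorem to compute the local intersection multiplicities of $C$ with $C_0$, obtaining $C\cdot C_0\equiv a$ or $-a-1\pmod{n}$, hence $C_0^2\equiv 2a+1$ or $-2a-1\pmod{2n}$ --- congruences incompatible with the generic values $-n+2a+1$ and $n-2a-1$, forcing $C_0^2=-1$ (Lemmas 4.3 and 4.4, with a transversality argument within the restricted class of invariant $J$ equal to $\hat{J}_0$ near the fixed points). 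This pointwise tangency-and-congruence mechanism is the missing idea in your proposal; without it, your plan has no way to exclude the reducible degeneration of $E$ when $n>2$ is even.
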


As in the case of Theorem 1.1, one similarly has the following corollary.

\vspace{1.5mm}

{\it A symplectic $\Z_n$-action on $\C\P^2\# \overline{\C\P^2}$ is equivariantly 
diffeomorphic to an equivariant connected sum of a pair of linear actions on $\C\P^2$ and 
$\overline{\C\P^2}$ if and only if it admits an invariant, smooth $(-1)$-sphere.
}
\vspace{1.5mm}

A natural question is whether a symplectic $\Z_n$-action on $\C\P^2\# \overline{\C\P^2}$ 
always possesses an invariant, topologically embedded $(-1)$-sphere which is not necessarily 
locally linear; note that without the locally-linear condition there are no additional constraints 
which must be satisfied by the local representations of the $\Z_n$-action. A particular interesting case is that of a piecewise linear $(-1)$-sphere, as such a $(-1)$-sphere has a regular 
neighborhood whose boundary is a smoothly embedded integral homology three-sphere. 

\begin{question}
Consider an arbitrary symplectic $\Z_n$-action on $X=\C\P^2\# \overline{\C\P^2}$.
\begin{itemize}
\item [{(i)}] Is there always an invariant piecewise linear $(-1)$-sphere in $X$?
\item [{(ii)}] Is there an equivariant decomposition of $X$ into $X_{+}$, $X_{-}$ along a smoothly
embedded integral homology three-sphere $\Sigma^3$ such that $X_{+}$, $X_{-}$ are an integral homology $\C\P^2\setminus B^4$, 
$\overline{\C\P^2}\setminus B^4$ respectively?
\item [{(iii)}] Is there an equivariant decomposition of $X$ as described in (ii) such that 
$\Sigma^3$ is of contact type with respect to the $\Z_n$-invariant symplectic form on $X$?
\end{itemize}
\end{question}

Note that in (ii), (iii) of Question 1.3,  if we replace the word ``integral" by ``rational", the answers 
to both questions are affirmative, cf. Lemma 2.3(2). Moreover, regarding (ii) of Question 1.3 in 
the non-equivariant setting, we should mention the work of Freedman-Taylor \cite{FT} on splitting 
smooth, simply connected four-manifolds along integral homology three-spheres.

\vspace{1.5mm}

Back to Theorems 1.1 and 1.2. What we have obtained therein can be paraphrased in terms of
the non-vanishing of certain ``equivariant Gromov-Taubes invariant", i.e., the invariant defined by 
counting $G$-invariant $J$-holomorphic $(-1)$-spheres for a $G$-invariant $J$ which may be required to satisfy certain genericity conditions. In particular, what we proved in Theorems 1.1 and 1.2 asserts that the corresponding equivariant Gromov-Taubes invariant is non-zero as long as there is a $G$-invariant locally linear or smooth $(-1)$-sphere, which is a property of the $G$-action that depends only on the equivariant homeomorphism or diffeomorphism type of the $G$-action. With this 
understood, there are potentially non-diffeomorphic $G$-Hirzebruch surfaces which have the same 
fixed-point set structure and are non-distinguishable by traditional methods.
Thus this type of results may be used to tell them apart if we could also prove a corresponding 
vanishing theorem for such equivariant Gromov-Taubes invariants.

Let $G=\Z_n$ for a fixed integer $n$, we shall classify Hirzebruch surfaces $F_r$, which is 
equipped with a homologically
trivial, holomorphic $G$-action, up to orientation-preserving equivariant diffeomorphisms. 
We will denote such a $G$-Hirzebruch surface by $F_r(a,b)$, where $(a,b)$ is an ordered 
pair of integers mod $n$ which completely determines the holomorphic $G$-action
(cf. \cite{Wil2}, \S 4, for the precise definition of $F_r(a,b)$).
Given any two $G$-Hirzebruch surfaces $F_r(a,b)$ and $F_{r^\prime}(a^\prime,b^\prime)$,
there are six types of canonical equivariant diffeomorphisms between them 
(all orientation-preserving) if certain numerical conditions are satisfied by the triples $(a,b,r)$ 
and $(a^\prime,b^\prime,r^\prime)$. (A detailed description of these canonical equivariant 
diffeomorphisms can be found at the beginning of Section 5.) Call the composition of a finite
sequence of canonical equivariant diffeomorphisms a {\it standard} equivariant
diffeomorphism. Then there is a complete set of numerical conditions for the triples 
$(a,b,r)$ and $(a^\prime,b^\prime,r^\prime)$, such that there is a standard equivariant
diffeomorphism between $F_r(a,b)$ and $F_{r^\prime}(a^\prime,b^\prime)$ if and only if
one of the numerical conditions is satisfied (cf. \cite{Wil2}, \S 4). 

On the other hand, if $F_r(a,b)$ and $F_{r^\prime}(a^\prime,b^\prime)$ are 
orientation-preservingly equivariantly diffeomorphic, then they must have isomorphic 
fixed-point set structures, which can be stated equivalently as one of a set of numerical
conditions is satisfied by the triples $(a,b,r)$ and $(a^\prime,b^\prime,r^\prime)$. However, 
this set of numerical conditions is strictly weaker than the set of conditions which guarantees 
a standard equivariant diffeomorphism between $F_r(a,b)$ and $F_{r^\prime}(a^\prime,b^\prime)$.
With this understood, the main task of our classification is to show, using the technique of
equivariant Gromov-Taubes invariants, that one of the stronger set of conditions must be 
satisfied.


With the preceding understood, we shall formulate our classification as follows. 

\begin{theorem}
Two $G$-Hirzebruch surfaces are orientation-preservingly equivariantly diffeomorphic
iff there is a standard equivariant diffeomorphism between them. 
\end{theorem}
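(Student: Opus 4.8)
The forward implication is immediate: by definition a standard equivariant diffeomorphism is a composition of canonical equivariant diffeomorphisms, each of which is orientation-preserving and equivariant, so their composition is again an orientation-preserving equivariant diffeomorphism. The content of the theorem therefore lies entirely in the converse, and here the strategy is to show that the mere existence of an orientation-preserving equivariant diffeomorphism forces one of the \emph{stronger} numerical conditions on the triples, namely those that guarantee a standard diffeomorphism, rather than only one of the weaker conditions coming from the fixed-point data.

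The plan is to proceed in two stages. First I would record the necessary conditions: if $\phi\colon F_r(a,b)\to F_{r'}(a',b')$ is an orientation-preserving equivariant diffeomorphism, then it carries fixed points to fixed points and induces isomorphisms of the tangential $G$-representations, so the fixed-point set structures must be isomorphic. As recalled from \cite{Wil2}, this is equivalent to $(a,b,r)$ and $(a',b',r')$ satisfying one of the weaker numerical conditions. The gap between these and the stronger conditions consists of finitely many families of pairs whose fixed-point data coincide but for which no canonical diffeomorphism is available; these are precisely the cases that must be excluded.

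The second and essential stage is to excise the gap cases using the equivariant Gromov-Taubes invariant developed earlier. The key geometric input is that every $G$-Hirzebruch surface $F_r(a,b)$ carries its negative section as a distinguished $G$-invariant holomorphic $(-r)$-sphere; by the techniques for producing embedded invariant $J$-holomorphic $(-r)$-spheres, this homology class has a non-vanishing equivariant Gromov-Taubes invariant, and moreover the invariant is refined by the local $G$-representations at the two fixed points lying on the invariant $(-r)$-sphere. Since $\phi$ must carry the (decorated) invariant of $F_r(a,b)$ to that of $F_{r'}(a',b')$, matching these decorated counts pins down not only the unordered fixed-point data but also which fixed points lie on the negative section together with their weights there. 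I would then verify, case by case over the finitely many gap families, that this additional constraint forces the triples to satisfy one of the stronger conditions, whence a standard equivariant diffeomorphism exists by \cite{Wil2}.

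The main obstacle I expect is this second stage: establishing that the equivariant Gromov-Taubes invariant is genuinely an equivariant diffeomorphism invariant here, so that the decorated count of invariant $J$-holomorphic $(-r)$-spheres is independent of the choice of invariant $J$ within the allowed genericity class, and then computing it explicitly on $F_r(a,b)$ to read off the decoration. Non-vanishing is supplied by the negative section, but the delicate point is that the invariant must separate exactly those gap cases left undecided by the fixed-point data; this is where the restriction that $r$ be relatively small compared with $n$ enters, ensuring that the relevant moduli spaces of invariant $(-r)$-spheres are well-behaved and that the decorated count is sharp enough to distinguish the non-diffeomorphic surfaces.
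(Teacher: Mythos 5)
Your first stage matches the paper's reduction in spirit, but the paper makes it concrete in Lemmas 5.1 and 5.2: after applying the canonical moves $c_1,\dots,c_6$, the only surviving ambiguities are $F_r(a,b)$ versus $F_{r-n}(a,b)$ with $\gcd(a,n)=1$, and the pseudo-free pair $F_n(a,b)$ versus $F_n(b,a)$ with $a\neq\pm b$. Two points of your framing are off here. The pseudo-free family is disposed of by citing Wilczynski's topological classification (Lemma 4.9(4),(5) of \cite{Wil2}), not by the Gromov--Taubes technique at all. And not every ``gap case'' is to be excluded: for $n=2$ with pseudo-free actions or $r,r'$ odd, the residual pairs $F_r(1,1)$, $F_{r-2}(1,1)$ turn out to be \emph{standardly} equivariantly diffeomorphic, via $c_6$ or $c_3\circ c_6$; a plan that sets out to distinguish all leftover pairs would stall exactly there.

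The genuine gap is in your second stage. You presuppose a well-defined, deformation-invariant ``decorated'' equivariant Gromov--Taubes count that can be computed on both sides and matched through the diffeomorphism. The paper has no such theory --- it says explicitly that the general theory of equivariant Gromov--Taubes invariants is still under construction, precisely because invariant $J$ are never generic in the usual sense and negative curves persist. What Proposition 5.3 actually runs is an asymmetric contradiction in which the only invariance used is trivial: a hypothetical equivariant diffeomorphism $f\colon F_r(a,b)\to F_{r'}(a,b)$, $r'=r-n$, transports the negative section to a smooth $G$-invariant $(-r')$-sphere $C\subset F_r(a,b)$. Lemmas 5.5 and 5.6 then force, for generic invariant $J$ agreeing with a fixed local model near two fixed points, an embedded invariant $J$-holomorphic sphere homologous to $C$ --- this is the non-vanishing half, \emph{derived from the hypothesis} via the orbifold index inequality $d\geq 0$ together with mod-$2n$ constraints on $C_0^2$ coming from local intersection numbers (the Lemma 4.3 mechanism), not computed as an a priori invariant. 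The half entirely missing from your proposal is the vanishing side: one lets $J'_k\to J_0$ (the integrable structure on $F_r(a,b)$), applies equivariant Gromov compactness, and shows the limiting cusp-curve cannot exist. The only invariant $J_0$-holomorphic spheres are $E_0,E_1,F_0,F_1$; the identity $2C\cdot E_1=C^2+E_1^2=-r'+r=n$ caps the total pairing with $E_1$; non-invariant components come in free $G$-orbits contributing at least $n$; $E_1$ itself is excluded since $E_1^2=r\geq n$; an invariant fiber must carry multiplicities $\equiv b$ and $\equiv -b\pmod n$ at its two branches, hence total at least $n$; and $E_0$ alone fails since $E_0\cdot E_1=0$. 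This degeneration analysis is where the smallness of $r'$ relative to $n$ actually enters (through the arithmetic $-r'+r=n$ and the weight bookkeeping), rather than through abstract well-behavedness of moduli spaces; without it, your ``matching of decorated counts'' has no defined object on the $F_r(a,b)$ side to match against.
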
 

We shall point out that an analogous classification for pseudo-free $G$-Hirzebruch 
surfaces was obtained by Wilczynski (cf. \cite{Wil2}, Theorem 4.2) as an application 
of the topological classification of pseudo-free locally linear cyclic actions on simply 
connected four-manifolds (cf. \cite{Wil1,Wil2}). In fact it was actually shown that two 
pseudo-free $G$-Hirzebruch surfaces are equivarianly diffeomorphic if and only if they are equivariantly homeomorphic (cf. \cite{Wil2}, Theorem 4.14). It is not known whether this 
coincidence of topological and smooth classifications of $G$-Hirzebruch surfaces continues to 
hold in the non-pseudo-free case. However, we shall point out that a negative answer would 
imply that the smoothness condition in Theorem 1.2 is necessary.

\vspace{1.5mm}

Now we discuss some of the technical aspects of this paper. It is well-known that in the $J$-holomorphic curve
theory in dimension four, the presence of $J$-holomorphic curves of negative self-intersection causes
considerable complications in the analysis of singularity or intersection patterns of $J$-holomorphic 
curves. One basic approach to get around this issue is to work with generic almost complex structures.
The basic fact is that for a generic $J$, the only $J$-holomorphic curves of negative self-intersection 
are $(-1)$-spheres. Therefore, by working with the corresponding minimal symplectic four-manifolds
and by working with generic almost complex structures, one can avoid the issue of $J$-holomorphic 
curves of negative self-intersection. With this said, however, in various applications of $J$-holomorphic
curves one is often forced to work with non-generic almost complex structures. See Li-Zhang 
\cite{LZ} and McDuff-Opshtein \cite{McDO} for the recent articles on this topic. 

For $J$-holomorphic curves in the equivariant setting (or more generally the orbifold setting), one has to work with $G$-invariant almost complex structures. Even though one can choose generic 
$G$-invariant $J$, these almost complex structures are not generic in the usual sense; in particular,
one has to face the presence of $J$-holomorphic curves of negative self-intersection. Moreover, in the case of rational or ruled, additional complication is caused by the fact that even though the group 
action may be minimal, the underlying manifold may not be (see the remarks following Theorem 1.0.). For these reasons, the general theory of equivariant Gromov-Taubes invariants is still under construction. Finally, we remark that in the equivariant setting, information about local representations at the fixed points of the $G$-action is extremely important in analyzing singularity or intersection patterns of $G$-invariant $J$-holomorphic curves.

\vspace{1.5mm}

With the preceding understood, two technical results of this paper concerning $J$-holomorphic curves in $X=\C\P^2\#\overline{\C\P^2}$ or $\s^2\times \s^2$ for an {\it arbitrary} $J$ are worth mentioning. To describe the result for $X=\C\P^2\#\overline{\C\P^2}$, let $\omega$ be any given symplectic form
on $X$ and write the canonical class $c_1(K_\omega)=-3e_0+e_1$ for some basis $e_0,e_1\in
H^2(X)$ such that $e_0^2=-e^2_1=1$ and $e_0\cdot e_1=0$. Then one has the following alternative:

\vspace{1.5mm}

{\it For any $\omega$-compatible $J$, either $e_1$ is represented by an embedded 
$J$-holomorphic two-sphere, or $X$ admits a fibration by embedded $J$-holomorphic two-spheres 
in the class $e_0-e_1$, together with a $J$-holomorphic section $C_0$ of self-intersection $C_0^2<-1$.
}

\vspace{1.5mm} 

This result is the content of Lemma 2.3; there is a corresponding result for $X=\s^2\times \s^2$
given in Lemma 2.4. Section 2 is devoted to the proofs of these lemmas. 

With Lemma 2.3 in hand and assuming $J$ is $G$-invariant,
the main task in the proof of Theorems 1.1 and 1.2 is to show that, with 
the existence of a $G$-invariant topological or smooth $(-1)$-sphere, one can force the 
$J$-holomorphic section $C_0$ from Lemma 2.3 to have self-intersection $-1$ when $J$ is chosen
to be a certain generic $G$-invariant almost complex structure. Sections 3 and 4 are occupied by these
discussions, with Section 3 devoted to the case of pseudo-free actions and Section 4 to 
non-pseudo-free actions which requires a different approach. 

In Section 5 we extend the techniques developed in Sections 3 and 4, and show that with the
existence of a $G$-invariant smooth $(-r)$-sphere where $r\geq 0$ and relatively small, one can 
force the $J$-holomorphic section $C_0$ from Lemma 2.3 or 2.4 to have self-intersection $-r$ 
when $J$ is chosen to be a certain generic $G$-invariant almost complex structure. With this 
understood, the main task in proving Theorem 1.4 is to use the corresponding equivariant 
Gromov-Taubes invariant to distinguish $G$-Hirzebruch surfaces $F_r(a,b)$ and $F_{r+n}(a,b)$
(which have the same fixed-point set structure), showing that for exactly one of the 
$G$-Hirzebruch surfaces, the corresponding equivariant Gromov-Taubes invariant is non-vanishing.
This is the content of Proposition 5.3. 

Finally, in Section 6 we return to the original question about symplectic and smooth 
minimality of symplectic $G$-manifolds. We consider the question of minimality of minimal rational $G$-surfaces in the category of symplectic (resp. smooth or even locally linear topological)
$G$-manifolds. Using some general facts about minimal rational $G$-surfaces, we show that 
such a $G$-surface must be minimal as a topological $G$-manifold, unless it is a conic bundle with singular fibers or a Hirzebruch surface. Furthermore, specializing to the case of $G=\Z_n$, we show that a minimal $G$-conic bundle 
with singular fibers must be minimal as a topological $G$-manifold (cf. Proposition 6.2). Combining this result with Theorem 1.2, we obtain the following theorem.

\begin{theorem}
Let $G=\Z_n$ be a cyclic group of order $n$. Then a minimal rational $G$-surface is minimal as a symplectic $G$-manifold if and only if it is minimal as a smooth $G$-manifold. 
\end{theorem}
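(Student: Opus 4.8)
The plan is to reduce Theorem 1.6 to the classification of minimal rational $G$-surfaces combined with the symplectic-to-smooth comparison results already established in the paper. Since the ``if'' direction is automatic (a symplectic $(-1)$-sphere is in particular a smooth $(-1)$-sphere, so smooth non-minimality implies symplectic non-minimality), the content lies entirely in the ``only if'' direction: I must show that if a minimal rational $G=\Z_n$-surface fails to be symplectically minimal, then it fails to be smoothly minimal as well. Equivalently, I want to rule out the existence of a $G$-manifold that is smoothly non-minimal yet symplectically minimal.

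\medskip

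First I would invoke the structural analysis promised in Section 6: by the general theory of minimal rational $G$-surfaces (cf. \cite{DI}), a minimal rational $G$-surface is either already minimal as a topological $G$-manifold, or else it is a $G$-conic bundle with singular fibers or a Hirzebruch surface. The paper asserts (via the work leading to Proposition 6.2) that a minimal rational $G$-surface is automatically minimal as a topological (hence smooth and symplectic) $G$-manifold \emph{unless} it falls into one of these two exceptional families. This disposes of the generic case immediately, since in that situation all three notions of minimality coincide by Theorem 1.0-type reasoning applied equivariantly. So the entire problem collapses onto the two exceptional families.

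\medskip

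For the $G$-conic bundle case with singular fibers, I would cite Proposition 6.2 directly: it states that such a surface must be minimal as a topological $G$-manifold, which forces it to be minimal in the smooth and symplectic categories as well, so symplectic and smooth minimality trivially agree. The remaining and genuinely substantive case is that of the Hirzebruch surfaces. Here I would appeal to Theorem 1.2: if the underlying smooth $G$-manifold $\C\P^2\#\overline{\C\P^2}$ (the case of odd Hirzebruch surfaces) carries a $G$-invariant smooth $(-1)$-sphere, then it carries a $G$-invariant $\omega$-symplectic $(-1)$-sphere for \emph{any} $G$-invariant symplectic form $\omega$. Thus for Hirzebruch surfaces, smooth non-minimality implies symplectic non-minimality, which is exactly the contrapositive of what I need: symplectic minimality implies smooth minimality. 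Combined with the trivial reverse implication, this yields equivalence on the Hirzebruch family.

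\medskip

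I expect the main obstacle to be purely organizational rather than analytic at this stage, since the heavy lifting has been front-loaded into Proposition 6.2 and Theorem 1.2. The delicate point is ensuring that Theorem 1.2 applies to \emph{all} the relevant Hirzebruch surfaces arising as minimal rational $G$-surfaces, including the even Hirzebruch surfaces diffeomorphic to $\s^2\times\s^2$; there I would need the analogue of Lemma 2.3 for $\s^2\times\s^2$ (namely Lemma 2.4) and the corresponding extension of Theorem 1.2 to the $\s^2\times\s^2$ setting, together with a verification that a minimal $G$-Hirzebruch surface that is smoothly non-minimal indeed possesses a $G$-invariant smooth $(-1)$-sphere that can be consistently oriented with $G$-invariant homology class (so that the hypotheses of Theorem 1.2 are met). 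Once these compatibility checks are in place, assembling the three cases—generic minimal surface, conic bundle, Hirzebruch surface—gives the full equivalence for $G=\Z_n$.
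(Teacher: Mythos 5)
Your proposal is correct and follows essentially the same route as the paper, whose entire proof of this theorem is the one-line observation that it follows from Proposition 6.2 and Theorem 1.2, with Lemma 6.1 supplying the reduction to conic bundles with singular fibers and Hirzebruch surfaces. Your one flagged worry is unnecessary: Lemma 6.1 already restricts the Hirzebruch case to $F_r$ with $r>1$ odd, because the even Hirzebruch surfaces are diffeomorphic to $\s^2\times\s^2$, whose even intersection form admits no class of square $-1$ and hence no $(-1)$-spheres in any category, so no extension of Theorem 1.2 (or use of Lemma 2.4) is needed there; and your remaining compatibility check—that smooth non-minimality yields a single $G$-invariant smooth $(-1)$-sphere rather than a permuted collection—follows from Lemma 2.1, since two disjoint $(-1)$-spheres in $\C\P^2\#\overline{\C\P^2}$ would represent the classes $\pm e_1$, which have nonzero pairwise intersection.
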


Throughout this paper, we shall not distinguish the notations for a $J$-holomorphic curve, its homology
class, or the cohomology class Poincar\'{e} dual to it, as long as there will be no confusion caused by
the ambiguity. 

\vspace{2mm}

\noindent{\bf Acknowledgments:}
Part of this paper was written during a visit to Capital Normal University, Beijing. I wish to thank 
Professor Fuquan Fang for the invitation and very warm hospitality. I am also in debt to Slawomir
Kwasik for enlightening conversations and various helps, and to Tian-Jun Li for bringing his joint 
work with Weiyi Zhang to my attention. Finally, I wish to thank the referee for several suggestions which improved the exposition of the paper. 

\section{Preliminary lemmas}

We first consider the case where $X=\C\P^2\# \overline{\C\P^2}$. Fix a basis $e_0,e_1$ 
of $H^2(X)$ such that 
$$
e_0^2=1, e_1^2=-1, \mbox{ and } e_0\cdot e_1=0.
$$
The following lemma shows that such a basis is unique up to a sign change.

\begin{lemma}
Suppose $f_0,f_1\in H^2(X)$ such that $f_0^2=-f_1^2=1$. Then 
$$
f_0=\pm e_0, f_1=\pm e_1.
$$
\end{lemma}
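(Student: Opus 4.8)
The plan is to reduce the statement to an elementary Diophantine computation. Since $X=\C\P^2\#\overline{\C\P^2}$ is simply connected, $H^2(X;\Z)$ is free abelian of rank $2$, and by the setup of Section 2 the pair $\{e_0,e_1\}$ is a basis in which the intersection form is diagonal, equal to $\mathrm{diag}(1,-1)$. Accordingly, I would begin by writing
\[
f_0=a\,e_0+b\,e_1,\qquad f_1=c\,e_0+d\,e_1
\]
for uniquely determined integers $a,b,c,d$.

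Next I would translate the two hypotheses on self-intersection into equations in these integers. Using $e_0^2=1$, $e_1^2=-1$, and $e_0\cdot e_1=0$, the condition $f_0^2=1$ becomes $a^2-b^2=1$, and the condition $f_1^2=-1$ becomes $c^2-d^2=-1$, i.e.\ $d^2-c^2=1$.

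The key step is to solve these over $\Z$. Factoring, $a^2-b^2=(a-b)(a+b)=1$; since the only expressions of $1$ as a product of two integers are $1\cdot 1$ and $(-1)\cdot(-1)$, one is forced to have $a-b=a+b\in\{1,-1\}$, whence $b=0$ and $a=\pm 1$, so that $f_0=\pm e_0$. Applying the identical argument to $d^2-c^2=(d-c)(d+c)=1$ gives $c=0$ and $d=\pm 1$, i.e.\ $f_1=\pm e_1$, which is exactly the desired conclusion.

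There is no real obstacle here, and the only point worth flagging is a small pleasant surprise: orthogonality $f_0\cdot f_1=0$ is never invoked as a hypothesis — it emerges for free once the coefficients are pinned down — and the whole argument rests on the elementary observation that $x^2-y^2=\pm 1$ admits no integer solution other than $(\pm 1,0)$ and $(0,\pm 1)$. One could equally phrase this by noting that the automorphism group of the indefinite unimodular form $\langle 1\rangle\oplus\langle -1\rangle$ over $\Z$ is generated by the sign changes on the two basis vectors, but the direct factorization is quicker and self-contained.
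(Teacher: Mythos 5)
Your proof is correct and is essentially the paper's own argument: write $f_0=ae_0+be_1$, factor $1=f_0^2=(a-b)(a+b)$ over $\Z$ to force $b=0$, $a=\pm1$, and treat $f_1$ symmetrically via $d^2-c^2=1$. Your closing observations (orthogonality being unused, the automorphism group of $\langle 1\rangle\oplus\langle -1\rangle$) are accurate but add nothing beyond the paper's two-line proof.
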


\begin{proof}
Write $f_0=ae_0+be_1$ for $a,b\in\Z$. Then $1=f_0^2=a^2-b^2=(a-b)(a+b)$. It follows easily
that $a=\pm 1$ and $b=0$. The claim for $f_1$ follows similarly.

\end{proof}

Let $\omega$ be any given symplectic form on $X$, and let $K_\omega$ be the canonical 
bundle. Then $c_1^2(K_\omega)=8$ implies easily that $c_1(K_\omega)=\pm 3e_0\pm e_1$. 
Without loss of generality, we assume that $c_1(K_\omega)=-3e_0 + e_1$. Note that with this 
choice, $e_1$ is represented by a $\omega$-symplectic $(-1)$-sphere (cf. \cite{LL0}, Theorem A) 
which, by Lemma 2.1, is the only such class. In particular, $\omega(e_1)>0$. We shall also 
consider the ``fiber" class $F=e_0-e_1$. We claim that $\omega(F)>0$ as well; in particular, 
$\omega(e_0)>\omega(e_1)$. To see this, we blow down $(X,\omega)$ along the symplectic 
$(-1)$-sphere representing $e_1$, and denote the resulting symplectic four-manifold by 
$(X^\prime,\omega^\prime)$. Then $X^\prime$ is a symplectic $\C\P^2$, with 
$c_1(K_{\omega^\prime})=-3e_0$ where we naturally identify $e_0$ as a class in $X^\prime$.
Then $e_0$ can be represented by an embedded, $\omega^\prime$-symplectic two-sphere $S$ in 
$X^\prime$ passing through the center of the blowdown operation. The proper transform of $S$ in 
$X$ is an embedded, $\omega$-symplectic two-sphere representing $F$, hence $\omega(F)>0$ 
as claimed.

The following lemma should be well-known to the experts. However, we did not find the exact statements
of the lemma in the literature, hence for the sake of completeness, we include it here.

\begin{lemma}
Let $SW_X$ denote the Seiberg-Witten invariant of $(X,\omega)$ defined in the Taubes chamber,
where $X=\C\P^2\# \overline{\C\P^2}$. Then 
$$
SW_X(e_1)=\pm 1, \mbox{ and } SW_X(F)=\pm 1. 
$$
\end{lemma}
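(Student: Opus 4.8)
The plan is to compute the two Seiberg-Witten invariants by relating $X=\C\P^2\#\overline{\C\P^2}$ to a symplectic $\C\P^2$ via blowdown and to invoke Taubes' theory, specifically his result $SW_X(K_\omega)=\pm 1$ for any symplectic four-manifold together with the charge conjugation symmetry $SW_X(\alpha)=\pm SW_X(K_\omega-\alpha)$. First I would recall that for a symplectic four-manifold the Seiberg-Witten invariant in the Taubes chamber satisfies $SW_X(0)=\pm 1$ and $SW_X(c_1(K_\omega))=\pm 1$ (the canonical and anti-canonical $\text{Spin}^c$ structures). The expected dimension of the moduli space for a class $\alpha$ is $d(\alpha)=\tfrac{1}{4}(\alpha^2-(2\chi+3\sigma))$, where for $X$ we have $2\chi+3\sigma=2\cdot 4+3\cdot 0=8$. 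For $\alpha=e_1$ one gets $e_1^2=-1$, and for $\alpha=F=e_0-e_1$ one gets $F^2=0$; I would check that in both cases the relevant dimension count is consistent with a nonzero invariant, so that the classes are of the right type to carry a count.

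Next I would exploit the structure already laid out in the excerpt: $e_1$ is represented by a symplectic $(-1)$-sphere and $F=e_0-e_1$ is represented by an embedded symplectic two-sphere of square $0$. The cleanest route is the wall-crossing/blowup formula for Seiberg-Witten invariants. Blowing down the symplectic $(-1)$-sphere in class $e_1$ produces the symplectic $\C\P^2$ with $c_1(K_{\omega'})=-3e_0$. Taubes' nonvanishing theorem applied on the blowup side, combined with the blowup formula $SW_{X'\#\overline{\C\P^2}}$ expressed in terms of $SW_{X'}$ and the exceptional class, yields $SW_X(e_1)=\pm 1$ directly, since the exceptional-class Seiberg-Witten invariant of a blowup is $\pm 1$. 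For the class $F$, I would use charge conjugation: $SW_X(F)=\pm SW_X(K_\omega-F)$, and compute $K_\omega-F=(-3e_0+e_1)-(e_0-e_1)=-4e_0+2e_1$; alternatively, and more transparently, I would note that $K_\omega-e_1=-3e_0+e_1-e_1$ reduces things to the canonical/fiber comparison. The most reliable handle is that both $e_1$ and $F$ are represented by embedded symplectic spheres realizing the adjunction equality with $K_\omega\cdot e_1=-1$ and $K_\omega\cdot F=-2$, which identifies them as Taubes-type classes whose Gromov invariant (hence $SW$) equals $\pm 1$ by the genus-zero count of a single embedded curve.

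The cleanest uniform argument, which I would actually write, is via Taubes' equivalence $SW_X=Gr_X$ between the Seiberg-Witten invariant and the Gromov invariant in the Taubes chamber. For a class represented by a single embedded symplectic sphere $C$ of negative or zero self-intersection with $C\cdot C=C^2$ and the right dimension, the Gromov invariant is a signed count of one such curve and equals $\pm 1$. Since $e_1$ is the unique symplectic $(-1)$-sphere class (by Lemma 2.1 and the discussion preceding) and $F$ is represented by an embedded symplectic sphere of square $0$ lying in a pencil, both Gromov invariants are $\pm 1$, giving $SW_X(e_1)=\pm 1$ and $SW_X(F)=\pm 1$.

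The hard part will be pinning down the sign conventions and the chamber structure precisely: for a four-manifold with $b_2^+=1$ such as $X$, the Seiberg-Witten invariant depends on a chamber, and one must verify that the Taubes chamber is the one in which the Gromov-theoretic count applies and that the dimension/reducibles do not interfere. I expect the main obstacle to be a careful bookkeeping of the $b_2^+=1$ wall-crossing so that the blowup formula and Taubes' nonvanishing are being applied in the correct chamber; once that is fixed, the identification $SW_X=\pm 1$ for both classes follows from the explicit embedded symplectic representatives already constructed above.
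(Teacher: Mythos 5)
Your main route is genuinely different from the paper's, but as written it has a gap, and the gap is exactly the step you defer at the end. The paper's proof does not use the full equivalence $SW_X=Gr_X$ at all. It uses only two cheap inputs: (1) since $b_1(X)=0$, the wall-crossing number is $\pm 1$ (Kronheimer--Mrowka), which combined with charge conjugation gives $|SW_X(e_1)\pm SW_X(K_\omega-e_1)|=1$ and $|SW_X(F)\pm SW_X(K_\omega-F)|=1$; and (2) the \emph{easy} direction of Taubes' theorem ($SW\neq 0$ in the Taubes chamber implies the class is represented by $J$-holomorphic curves, hence has positive $\omega$-area), applied to the conjugate classes: $\omega(K_\omega-e_1)=-3\omega(e_0)<0$ and $\omega(K_\omega-F)=-4\omega(e_0)+2\omega(e_1)<0$, so $SW_X(K_\omega-e_1)=SW_X(K_\omega-F)=0$ and the lemma drops out. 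Notice that your second paragraph already contains all of these ingredients---you even compute $K_\omega-F=-4e_0+2e_1$---but you never take the one decisive step of observing that these conjugate classes have negative symplectic area and hence vanishing $SW$ in the Taubes chamber. Instead you pivot to the heavier route.

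That heavier route, ``$SW=Gr$ in the Taubes chamber plus a direct count,'' has two problems in this context. First, the direction you actually need is $Gr\Rightarrow SW$ (nonvanishing of a Gromov count forcing nonvanishing of $SW$) for a $b_2^{+}=1$ manifold in a specified chamber; this is precisely the ``careful bookkeeping'' you acknowledge as the main obstacle and never supply, so the proposal is circular at its crux. Second, the claim that the Gromov invariant of $e_1$ is $\pm 1$ ``because it is represented by a single embedded symplectic sphere'' is too glib: for non-generic $J$ (e.g.\ the Hirzebruch surface $F_3$) the class $e_1$ is represented only by a reducible configuration (a negative section plus fibers), and ruling out such degenerations at the $J$ where one computes is real work---indeed it is essentially the content of the paper's Lemma 2.3, which logically comes \emph{after} this lemma and uses it as input. (The count can be salvaged by computing at the integrable structure on $F_1$, where the effective cone forbids decompositions of $e_1$ and $F$, but you would need to say this.) Finally, a bookkeeping slip: your dimension formula $d(\alpha)=\tfrac14(\alpha^2-(2\chi+3\sigma))$ applies to $c_1$ of the $\mathrm{Spin}^c$ structure, not to the Taubes twisting class; with $\alpha=e_1$ literally it gives $\tfrac14(-1-8)$, a non-integer, whereas the correct classes are $2e_1-K_\omega$ and $2F-K_\omega$, giving $d=0$ and $d=2$ respectively.
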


\begin{proof}
Since $b_1(X)=0$, the wall-crossing number is $\pm 1$ (cf. \cite{KM}). Consequently,
$$
|SW_X(e_1)\pm SW_X(K_\omega-e_1)|= 1, \mbox{ and } |SW_X(F)\pm SW_X(K_\omega -F)|=1. 
$$
The lemma follows by showing that $SW_X(K_\omega-e_1)=SW_X(K_\omega -F)=0$. The key point is
that $\omega(K_\omega-e_1)= -3\omega(e_0)<0$ and 
$$
\omega(K_\omega-F)=-4\omega(e_0)+2\omega(e_1)<-2\omega(e_1)<0,
$$
so that if any of $SW_X(K_\omega-e_1)$, $SW_X(K_\omega -F)$ is nonzero, by Taubes' theorem
\cite{Taubes} the corresponding class is represented by pseudo-holomorphic curves, contradicting the 
above negativity of the symplectic areas. The lemma is proved. 

\end{proof}

Now let $J$ be any given $\omega$-compatible almost complex structure on $X$. Since
$SW_X(e_1)=\pm 1$, by Taubes' theorem \cite{Taubes} there is a finite set of $J$-holomorphic curves $\{C_i|i\in I\}$, such that $e_1=\sum_{i\in I} m_i C_i$, where $m_i>0$.

\begin{lemma}
One has the following alternative: (1) The set $\{C_i|i\in I\}$ consists of a single element $C_0$
which is an embedded $J$-holomorphic two-sphere, and $e_1=C_0$, or (2) 
$X=\C\P^2\# \overline{\C\P^2}$ admits a 
$\s^2$-fibration over $\s^2$ such that each fiber is an embedded $J$-holomorphic two-sphere 
in the class $F$, and furthermore, the set $\{C_i|i\in I\}=\{C_0\}\sqcup \{C_i|i\in I_0\}$ where $C_0$ and each $C_i$ are a section and a fiber of the $\s^2$-fibration respectively, and 
$e_1=C_0+ \sum_{i\in I_0} m_i C_i$.
\end{lemma}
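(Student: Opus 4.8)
The plan is to analyze the $J$-holomorphic curve configuration $\{C_i \mid i \in I\}$ representing $e_1$ using intersection-theoretic positivity together with adjunction, exploiting the specific numerical data that $e_1^2 = -1$, $c_1(K_\omega) = -3e_0 + e_1$, and $c_1(K_\omega)\cdot e_1 = -1$. First I would record that since each $C_i$ is $J$-holomorphic and $J$ is $\omega$-compatible, positivity of intersections gives $C_i \cdot C_j \geq 0$ for distinct $i,j$, and $\omega(C_i) > 0$ for each $i$. From $e_1 = \sum_{i} m_i C_i$ with $m_i > 0$ and $e_1 \cdot F = (e_0-e_1)\cdot e_1 = 1 > 0$, $e_1^2 = -1 < 0$, and $\omega(e_1) > 0$, I would extract constraints on the homology classes $C_i$. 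The guiding dichotomy is whether some component $C_0$ has negative self-intersection (which, by the decomposition $e_1^2 = \sum m_i^2 C_i^2 + 2\sum_{i<j} m_i m_j C_i\cdot C_j = -1$ and nonnegativity of the cross terms, must happen), and then whether the \emph{remaining} components organize into a fibration.

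The key steps, in order, would be as follows. \textbf{Step 1.} Using $e_1^2 = -1$ and $C_i \cdot C_j \geq 0$, show that exactly one component, call it $C_0$, can carry the negative self-intersection, and estimate $C_0^2$. \textbf{Step 2.} Apply the adjunction inequality / formula for $J$-holomorphic curves: for an irreducible $J$-holomorphic curve $C$ of geometric genus $g$ one has $C^2 + c_1(K_\omega)\cdot C = 2g - 2 + 2\delta$ where $\delta \geq 0$ counts singularities, so $C^2 - c_1(K_\omega)\cdot C$ being small forces $C$ to be an embedded sphere. Pairing this with $c_1(K_\omega) = -3e_0 + e_1$ and the class constraints should identify the possible classes of the $C_i$: either $e_1$ itself (giving alternative (1), $C_0 = e_1$ embedded), or a section class together with fiber classes in $F = e_0 - e_1$. \textbf{Step 3.} In the non-trivial case, show every component other than $C_0$ lies in the fiber class $F$ (or a positive multiple), using $F^2 = 0$, $F \cdot e_1 = 1$, adjunction forcing embedded spheres, and the fact that distinct fibers are disjoint. \textbf{Step 4.} Promote the collection of fiber-class curves to an honest $\s^2$-fibration: since $F^2 = 0$, $F \cdot c_1(K_\omega) = (e_0-e_1)\cdot(-3e_0+e_1) = -4 - 1 = \ldots$ I would recompute this to confirm $F$ is represented by embedded spheres of square zero, invoke the standard result that a square-zero embedded $J$-sphere in a rational surface generates a fibration (the relevant McDuff-type structure theorem, consistent with Lemma 2.3's statement), and verify $C_0$ is a section by checking $C_0 \cdot F = 1$.

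The main obstacle I expect is \textbf{Step 4}, namely upgrading the existence of finitely many disjoint $F$-class $J$-spheres into a genuine fibration of all of $X$ by such spheres, with $C_0$ meeting each fiber once. Establishing that the fiber-class curves sweep out the whole manifold, and that there are no further degenerate fibers hiding reducible configurations, requires the positivity-of-genus/automatic-transversality or Gromov compactness machinery for the class $F$ rather than pure intersection arithmetic; this is where the real geometric input enters, as opposed to the bookkeeping in Steps 1--3. A secondary subtlety is ensuring the multiplicities $m_i$ and the partition $I = \{0\} \sqcup I_0$ come out exactly as stated, in particular that $C_0$ appears with multiplicity one and is the unique section among the components; I would handle this by pairing the relation $e_1 = C_0 + \sum_{i\in I_0} m_i C_i$ with $F$ and with $e_1$ itself to pin down the coefficients.
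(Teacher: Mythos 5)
Your outline reproduces the skeleton of the paper's proof: positivity of intersections plus the adjunction inequality to constrain the classes of the components, yielding the dichotomy between a single embedded curve in the class $e_1$ and a configuration of fiber-class spheres plus one negative section; then a compact, automatically regular moduli space $\M$ of embedded square-zero $J$-spheres in the class $F$ (the paper cites McDuff--Salamon, Lemma 3.3.3, for smoothness, and proves compactness by showing any cusp-curve limit $F=\sum_j m_j^\prime C_j^\prime$ pairs to zero against an existing fiber and hence collapses to a single embedded $F$-sphere) to produce the $\s^2$-fibration; and finally the pairing $1=e_1\cdot F$ to show the residual part is a single section of multiplicity one. Your identification of Step 4 as the locus of genuine geometric input, and your plan to pin down multiplicities by pairing against $F$ and $e_1$, are exactly where and how the paper operates. (For the computation you flagged: $c_1(K_\omega)\cdot F=(-3e_0+e_1)\cdot(e_0-e_1)=-2$, which is what makes the adjunction inequality $C_i^2+c_1(K_\omega)\cdot C_i+2\geq 0$ force multiplicity one for fiber-class components.)

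Two steps are under-justified as you state them. First, Step 1's claim that \emph{exactly one} component carries negative self-intersection cannot be extracted from $e_1^2=-1$ and $C_i\cdot C_j\geq 0$ alone: positive cross terms can offset several negative squares in the expansion of $e_1^2$. The paper never proves this up front; it aggregates all negative components into $\Theta=\sum_{i\in I\setminus I_0}m_iC_i$, notes $\Theta^2<0$, and, writing $\Theta=a_0e_0-b_0e_1$, derives the contradiction $\Theta^2=a_0^2-b_0^2\geq 0$ unless every component with $C_i^2\geq 0$ is a multiple of $F$; that $\Theta$ is a \emph{single} curve of multiplicity one emerges only at the end, from $\Theta\cdot F=1$ together with the fibration. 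Second, your Step 3 toolkit ($F^2=0$, $F\cdot e_1=1$, adjunction, disjointness of fibers) does not by itself force the nonnegative components into the class $F$: for instance the class $2e_0-e_1$ has square $3$ and satisfies adjunction as an embedded sphere, and ``distinct fibers are disjoint'' is circular before the fibration exists. The ingredient you are missing is Taubes' theorem applied to the class $F$ itself: the paper first computes $SW_X(F)=\pm 1$ via wall-crossing (Lemma 2.2), so $F$ has $J$-holomorphic representatives for the arbitrary given $J$, whence $F\cdot C_i\geq 0$, i.e.\ $a_i\geq b_i$, for all $i\in I_0$ with $C_i=a_ie_0-b_ie_1$; this is precisely the inequality that makes the $\Theta$-bookkeeping close. (One could alternatively get $a_i\geq b_i$ from $\omega(C_i)>0$, $C_i^2\geq 0$ and $\omega(e_0)>\omega(e_1)$, but some input beyond your cited tools is indispensable.) With these two repairs your plan coincides with the paper's argument.
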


\begin{proof}
We consider first the case where for each $i\in I$, $C_i^2<0$. If for all $i\in I$, $C_i^2\leq -2$,
then the adjunction formula would imply that $c_1(K_\omega)\cdot C_i\geq -2-C_i^2\geq 0$,
which would then give 
$$
c_1(K_\omega)\cdot e_1=\sum_{i\in I} m_ic_1(K_\omega)\cdot C_i\geq 0.
$$
But $c_1(K_{\omega})\cdot e_1=-1$, which is a contradiction. Hence there must be a 
$C_0\in \{C_i|i\in I\}$ such that $C_0^2=-1$. Lemma 2.1 implies that $C_0=e_1$, from which 
it follows easily that $\{C_i|i\in I\}$ consists of a single element $C_0$.  The adjunction formula
implies that $C_0$ is an embedded two-sphere. This belongs to case (1).

For case (2) let $I_0=\{i\in I| C_i^2\geq 0\}$ and assume that $I_0$ is nonempty. 
For each $i\in I_0$, we
write $C_i=a_ie_0-b_ie_1$ for $a_i,b_i\in\Z$, and note that $C_i^2\geq 0$ is equivalent to 
$a_i^2-b_i^2\geq 0$. On the other hand, since $SW_X(F)=\pm 1$, $F$ can be represented 
by $J$-holomorphic curves by Taubes' theorem \cite{Taubes}. By positivity of intersection 
of $J$-holomorphic curves, together with the fact that $C_i^2\geq 0$, $i\in I_0$, we see that 
$F\cdot C_i\geq 0$ is true for each $i\in I_0$, which can be translated into $a_i-b_i\geq 0$. 
It follows easily that $a_i+b_i\geq 0$ and $a_i>0$ for each $i\in I_0$. 

Now we set $\Theta=\sum_{i\in I\setminus I_0} m_iC_i$. Note that for each $i\in I_0$,
$\Theta\cdot C_i\geq 0$ by the positivity of intersection of $J$-holomorphic curves. It follows
easily that $\Theta^2<0$ as $e_1^2=-1<0$. Writing $\Theta=a_0e_0-b_0e_1$ for
some $a_0,b_0\in\Z$, we then have
$$
e_1=(a_0+\sum_{i\in I_0} m_i a_i) e_0- (b_0+\sum_{i\in I_0} m_ib_i)e_1,
$$
which gives $a_0+\sum_{i\in I_0} m_i a_i=0$ and $b_0+\sum_{i\in I_0} m_ib_i +1=0$. 
Consequently, unless $a_i-b_i=0$ for all $i\in I_0$, we must have 
$$
a_0+b_0=-\sum_{i\in I_0} m_i(a_i+b_i) -1<0 \mbox{ and } a_0-b_0
=-\sum_{i\in I_0} m_i(a_i-b_i) +1\leq 0,
$$
implying $\Theta^2=a_0^2-b_0^2\geq 0$ which is a contradiction. Hence $C_i=a_iF$ for
all $i\in I_0$. By the adjunction inequality, $C_i^2+c_1(K_\omega)\cdot C_i+2\geq 0$, 
which implies that for each $i\in I_0$, $a_i=1$ and $C_i$ is an embedded $J$-holomorphic 
two-sphere with self-intersection $0$.

To prove the rest of the assertions in case (2), we let $\M$ be the moduli space of embedded 
$J$-holomorphic two-spheres with self-intersection $0$ representing the class $F$. Then $\M$ 
is smooth (cf. \cite{McDS}, Lemma 3.3.3) and has dimension $2$, and we have just shown that 
$\M\neq \emptyset$ (in fact, $C_i\in \M$ for each $i\in I_0$). Furthermore, 
$\M$ must be compact. To see this, suppose $F=\sum_j m_j^\prime C_j^\prime$ for some 
$J$-holomorphic curves $C_j^\prime$ with multiplicity $m_j^\prime$. Then for any $i\in I_0$,
$0=F\cdot C_i=\sum_j m_j^\prime C_j^\prime\cdot C_i$. Note that $C_j^\prime\cdot C_i\geq 0$ 
for all $j$, from which it follows that for each $j$, $C_j^\prime\cdot C_i=0$ and 
$C_j^\prime$ is a positive multiple of $F$. It follows easily that $\{C_j^\prime\}$ consists of 
a single element $C_j^\prime$ with $m_j^\prime=1$. Furthermore, the adjunction formula 
implies that $C_j^\prime$ is an embedded two-sphere, hence $C_j^\prime\in \M$. By Gromov 
compactness, $\M$ is compact.

The $J$-holomorphic curves in $\M$ gives rise to a $\s^2$-fibration over $\s^2$ structure on $X$. Since $1=e_1\cdot F=(\Theta +\sum_{i\in I_0} m_i C_i)\cdot F=\Theta\cdot F$, we see immediately 
that $\{C_i|i\in I\setminus I_0\}$ consists of a single element $C_0$ with multiplicity $m_0=1$,
and $C_0\cdot F=1$. The latter implies easily that $C_0$ is a section of the $\s^2$-fibration on 
$X$. Finally, it is clear that $e_1=C_0+\sum_{i\in I_0} m_i C_i$. This finishes the proof of the
lemma.

\end{proof}

We remark that it is easily seen that $C_0^2$ and $e_1^2$ have the same parity. Consequently,
$C_0$ is an embedded $J$-holomorphic two-sphere with odd, negative self-intersection.

Lemma 2.3 has an analog in the case of $X=\s^2\times \s^2$ which will be used in the proof
of Theorem 1.4 in Section 5. The proof is similar, so we shall only sketch it here. 
Let $\omega$ be any given symplectic structure on $X$. Then there is a basis 
$e_1,e_2\in H^2(X)$ where $e_1^2=e^2_2=0$ and $e_1\cdot e_2=1$, such that the
canonical class $c_1(K_\omega)=-2e_1-2e_2$ (cf. \cite{LL}). Observe that $[\omega]^2>0$ 
implies that $\omega(e_1)$, $\omega(e_2)$ are non-zero and have the same sign. 
Together with the fact that $c_1(K_\omega)\cdot [\omega]<0$, it implies that both $\omega(e_1)$, 
$\omega(e_2)$ are positive. Finally, an argument involving wall-crossing as in Lemma 2.2 
shows that $SW_X(e_i)=\pm 1$ for $i=1,2$.

Let $J$ be any given $\omega$-compatible almost complex structure on $X$. By Taubes' theorem \cite{Taubes}, for any $j=1,2$, $e_j$ is represented by $J$-holomorphic curves. Without loss of generality, we only consider the case of $e_1$. Then there is a finite set of $J$-holomorphic 
curves $\{C_i|i\in I\}$ such that $e_1=\sum_{i\in I} m_i C_i$ for some $m_i>0$.

\begin{lemma}
One has the following alternative:  (1) The set $\{C_i|i\in I\}$ consists of a single element $C_0$
which is an embedded $J$-holomorphic two-sphere, and $e_1=C_0$, or (2) $X=\s^2\times\s^2$ admits a 
$\s^2$-fibration over $\s^2$ such that each fiber is an embedded $J$-holomorphic two-sphere 
in the class $e_2$, and furthermore, the set $\{C_i|i\in I\}=\{C_0\}\sqcup \{C_i|i\in I_0\}$ where 
$C_0$ and each $C_i$ are a section and a fiber of the $\s^2$-fibration respectively, and 
$e_1=C_0+ \sum_{i\in I_0} m_i C_i$.
\end{lemma}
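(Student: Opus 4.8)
The plan is to mirror the proof of Lemma 2.3, with the class $e_2$ now playing the role of the fiber class $F=e_0-e_1$. The essential numerical inputs are that $c_1(K_\omega)\cdot e_1=-2$ and that the intersection form of $\s^2\times\s^2$ is even, so that a $J$-holomorphic curve of negative self-intersection automatically satisfies $C^2\leq -2$. First I would split the index set as $I=I_0\sqcup(I\setminus I_0)$, where $I_0=\{i\in I\mid C_i^2\geq 0\}$. If $I_0=\emptyset$, then every component has $C_i^2\leq -2$, and the adjunction formula gives $c_1(K_\omega)\cdot C_i\geq -2-C_i^2\geq 0$, whence $c_1(K_\omega)\cdot e_1\geq 0$, contradicting $c_1(K_\omega)\cdot e_1=-2$. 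Hence $I_0\neq\emptyset$. Writing $C_i=a_ie_1+b_ie_2$ for $i\in I_0$, I would then extract the sign information $a_i,b_i\geq 0$: since $SW_X(e_2)=\pm 1$, the class $e_2$ is represented by $J$-holomorphic curves, and positivity of intersections gives $a_i=e_2\cdot C_i\geq 0$, while $b_i=e_1\cdot C_i=\sum_j m_j\,(C_j\cdot C_i)\geq 0$ follows from $C_i^2\geq 0$ together with positivity against the remaining components.

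Setting $\Theta=\sum_{i\in I\setminus I_0}m_iC_i=a_0e_1+b_0e_2$ and $\Xi=\sum_{i\in I_0}m_iC_i$, the identity $e_1=\Theta+\Xi$ yields $a_0+\sum_{I_0}m_ia_i=1$ and $b_0+\sum_{I_0}m_ib_i=0$, so in particular $b_0\leq 0$. If $\Theta=0$, then $b_0=0$ forces $b_i=0$ for all $i\in I_0$, so $C_i=a_ie_1$ has $C_i^2=0$; the adjunction inequality then gives $a_i=1$, and $\sum_{I_0}m_i=1$ leaves a single embedded sphere $C_0=e_1$, which is case (1). If $\Theta\neq 0$, I would show that every $C_i$, $i\in I_0$, is a multiple of $e_2$. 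The crux is that if some $a_{i}>0$, then $\sum_{I_0}m_ia_i\geq 1$ gives $a_0\leq 0$, and together with $b_0\leq 0$ one gets $\omega(\Theta)=a_0\,\omega(e_1)+b_0\,\omega(e_2)\leq 0$, since $\omega(e_1),\omega(e_2)>0$; but $\Theta$ is a nonempty positive combination of $J$-holomorphic curves, so $\omega(\Theta)>0$, a contradiction. Hence $a_i=0$ for all $i\in I_0$, and the adjunction inequality forces $b_i=1$, i.e. each such component is an embedded $J$-holomorphic sphere in the class $e_2$. Finally, as in Lemma 2.3, the moduli space $\M$ of embedded $J$-holomorphic $e_2$-spheres is smooth of dimension $2$ and compact by Gromov compactness, producing an $\s^2$-fibration; since $e_1\cdot e_2=1$ and each fiber has square $0$, one computes $\Theta\cdot e_2=1$, and because no component of $\Theta$ can be a fiber (each has square $\leq -2$) each contributes $C_j\cdot e_2\geq 1$, forcing $\Theta=C_0$ to be a single section. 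This is case (2).

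The step I expect to be the main obstacle is precisely the one forcing the nonnegative-square components into the fiber class, and the difficulty is structural: in Lemma 2.3 the decomposition $e_1^2=\Theta^2+2\,\Theta\cdot\Xi+\Xi^2$ with $\Theta\cdot\Xi\geq 0$ and $\Xi^2\geq 0$ yields at once $\Theta^2\leq e_1^2=-1<0$, and it is this strict negativity that collides with $\Theta^2\geq 0$ to pin down the fiber structure. Here $e_1^2=0$, so the identical computation only gives the non-strict bound $\Theta^2\leq 0$, and the degenerate possibility $\Theta^2=0$ is not excluded by self-intersection considerations alone. My proposed remedy is to abandon the self-intersection bookkeeping in favour of the symplectic-area argument used above, which replaces the failed strict inequality by the genuinely strict $\omega(\Theta)>0$ and thereby sidesteps the degeneracy entirely. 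The remaining work — the moduli-theoretic upgrade from ``each positive component is an embedded $e_2$-sphere'' to an honest $\s^2$-fibration with a single section — is then a verbatim repetition of the corresponding passage in Lemma 2.3.
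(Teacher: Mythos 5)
Your proof is correct, and its skeleton coincides with the paper's: split off $I_0=\{i:C_i^2\geq 0\}$, rule out $I_0=\emptyset$ by adjunction against $c_1(K_\omega)\cdot e_1=-2$, force the nonnegative-square components into the class $e_2$ via the sign constraints $a_i,b_i\geq 0$, upgrade to an $\s^2$-fibration through the moduli space of embedded $e_2$-spheres, and use $\Theta\cdot e_2=1$ to pin down a single section. The genuine divergence is exactly at the step you flagged. The paper retains the self-intersection bookkeeping of Lemma 2.3: it records $\Theta^2\leq 0$, observes that equality $\Theta^2=0$ forces $\Theta\cdot C_i=0$ and $C_i^2=0$ for all $i\in I_0$, then (assuming some $a_i>0$) derives $a_0\leq 0$, $b_0\leq 0$, hence $\Theta^2=2a_0b_0\geq 0$, lands in the degenerate case, extracts $b_0=0$ from $\Theta\cdot C_i=0$, and only at the very end invokes $0<\omega(\Theta)=a_0\,\omega(e_1)$ to contradict $a_0\leq 0$. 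You reach the same contradiction in one stroke: $a_0\leq 0$ and $b_0\leq 0$ give $\omega(\Theta)=a_0\,\omega(e_1)+b_0\,\omega(e_2)\leq 0$, against $\omega(\Theta)>0$ for a nonempty positive combination of $J$-holomorphic curves. This is a real simplification — it eliminates both the inequality $\Theta^2\leq 0$ and the equality-case analysis — and it uses only what the paper has already established, namely $\omega(e_1),\omega(e_2)>0$; in effect the paper's argument reduces to the same area positivity in its last line, and your version simply goes there directly. Two small remarks: your derivation of $a_i,b_i\geq 0$ (positivity of intersections against a Taubes representative of $e_2$ and against the remaining components of $e_1$) differs from the paper's (which uses $C_i^2\geq 0\Leftrightarrow a_ib_i\geq 0$ together with $\omega(C_i)>0$) but mirrors Lemma 2.3 and is equally valid — just note explicitly, as the paper does there, that if $C_i$ itself occurs as a component of the representative of $e_2$, its contribution is still nonnegative precisely because $C_i^2\geq 0$; and your explicit handling of the $\Theta=0$ case (deriving $b_i=0$, $a_i=1$, $\sum m_i=1$) is a cleaner account of what the paper compresses into ``$e_1$ is primitive.''
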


\begin{proof}
We set $I_0=\{i\in I| C_i^2\geq 0\}$. Then as we argued in the previous lemma, if $I_0=\emptyset$,
i.e., $C_i^2<0$ for all $i\in I$, then in the present case as $X$ is even, $C_i^2\leq -2$ for all
$i\in I$, so that $c_1(K_\omega)\cdot C_i\geq 0$ for all $i\in I$. This would contradict 
$c_1(K_\omega)\cdot e_1=-2$, hence we must have $I_0\neq \emptyset$.

Then there are two possibilities: (1) $I_0=I$, or (2) $I_0\neq I$. In the former case, it is easily
seen that $C_i^2=0$ for all $i$, as $e_1^2=0$, and furthermore, since $e_1$ is primitive, 
the set $\{C_i|i\in I\}$ must consist of a single element $C_0$ such that $e_1=C_0$. By the
adjunction formula, $C_0$ is an embedded $J$-holomorphic two-sphere. 

In the latter case where $I\setminus I_0\neq \emptyset$, we set 
$\Theta=\sum_{i\in I\setminus I_0} m_i C_i$. Then by a similar argument as in Lemma 2.3,
we have $\Theta^2\leq 0$. Moreover, if $\Theta^2=0$, we must have $\Theta\cdot C_i=0$
and $C_i^2=0$ for all $i\in I_0$.

To proceed further, for each $i\in I_0$ we write $C_i=a_i e_1+b_i e_2$ where $a_i,b_i\in \Z$. 
Then $C_i^2\geq 0$ is equivalent to $a_ib_i\geq 0$. Now $0<\omega(C_i)=a_i\omega(e_1)+
b_i\omega(e_2)$ implies immediately that $a_i,b_i\geq 0$. 

Now we write $\Theta=a_0e_1+b_0e_2$ for some $a_0,b_0\in\Z$. Then 
$$
e_1=(a_0+\sum_{i\in I_0} m_ia_i)e_1+ (b_0+\sum_{i\in I_0} m_ib_i)e_2. 
$$
If there is an $a_i>0$, then $a_0=1-\sum_{i\in I_0} m_i a_i\leq 0$. On the other hand, 
$b_0=-\sum_{i\in I_0} m_ib_i\leq 0$, which implies that $\Theta^2=2a_0b_0\geq 0$.
Since $\Theta^2\leq 0$, we must have $\Theta^2=0$, which means either $a_0=0$ or
$b_0=0$. We claim this is a contradiction. To see it, recall that there is an $a_i>0$, and 
$\Theta\cdot C_i=0$ for all $i\in I_0$. It follows easily that $b_0=0$. On the other hand,
$0<\omega (\Theta)=a_0\omega (e_1)$, so that $a_0>0$ must be true. But this contradicts
$a_0=1-\sum_{i\in I_0} m_i a_i\leq 0$, hence our claim follows. 

This shows that $a_i=0$ for all $i\in I_0$. Furthermore, as in the proof of Lemma 2.3,
the adjunction inequality implies that $b_i=1$ for all $i\in I_0$. Hence for each $i\in I_0$,
$C_i=e_2$ and is an embedded $J$-holomorphic two-sphere of self-intersection $0$. 

Similarly, $1=e_1\cdot e_2=\Theta\cdot e_2$ implies that $\{C_i|i\in I\setminus I_0\}$
consists of a single element $C_0$ with multiplicity $m_0=1$, and $C_0\cdot e_2=1$. 
Furthermore, the existence of $C_i$, $i\in I_0$,  gives rise to a $\s^2$-fibration over $\s^2$ 
structure on $X$, where each fiber is an embedded $J$-holomorphic two-sphere 
in the class $e_2$, and $C_0$ is a section of the $\s^2$-fibration. 
Finally, we note that $e_1=C_0+\sum_{i\in I_0} m_i C_i$. This finishes off the proof.

\end{proof}

We note that it follows easily from the proof that $C_0$ is an embedded $J$-holomorphic 
two-sphere with even, negative self-intersection.

We end this section with the following remarks. Suppose $X=\C\P^2\# \overline{\C\P^2}$
is given a smooth $G$-action and $\omega$ is a $G$-invariant symplectic form on $X$. 
We first note that the $G$-action must be homologically trivial in view of Lemma 2.1 and the fact 
that $\omega(e_0)>0$ and $\omega(e_1)>0$.

Suppose the almost complex structure $J$ in Lemma 2.3 is chosen to be $G$-invariant. 
Then in case (1) $C_0$ must be $G$-invariant. This is because for any $g\in G$, 
$(g\cdot C_0)\cdot C_0=C_0^2=-1$, so that $g\cdot C_0$ and $C_0$ can not be distinct
$J$-holomorphic curves. A similar argument shows that in case (2), $C_0$ is also
$G$-invariant, and the $\s^2$-fibration on $X$ is $G$-invariant. 

Note that Theorems 1.1 and 1.2 follow immediately if Lemma 2.3(1) is true. Hence without 
loss of generality, we shall assume case (2) of the lemma in the next two sections. 

\section{Invariant $(-1)$-spheres of pseudo-free actions}

In this section and the next one, we give a proof for Theorems 1.1 and 1.2; in particular, 
$X=\C\P^2\# \overline{\C\P^2}$.  In this section, we shall consider Theorem 1.1 for  
the case where the action is pseudo-free. (Recall that a finite group action is called 
{\it pseudo-free} if points of nontrivial isotropy are isolated.) 
Our goal is to show that by picking generic $G$-invariant $J$, one could manage to force 
the $J$-holomorphic two-sphere $C_0$ from Lemma 2.3(2) to have self-intersection $-1$. 
Note that so far we have not used the assumption that $X$ contains a $G$-invariant 
locally linear $(-1)$-sphere. On the other hand, note also that for a $G$-Hirzebruch surface 
$F_r$ with $r$ odd, such a $G$-invariant fibration with a $G$-invariant section of odd, 
negative self-intersection exists automatically. In order to better understand the role played 
by the $G$-invariant locally linear $(-1)$-sphere, we begin with the following example. 

\begin{example}
Recall that for an orientation-preserving, locally linear $\Z_n$-action on an oriented four-manifold, the representation on the tangent space at a fixed point is given by a pair of integers mod $n$ after fixing a generator of $\Z_n$. The pair of weights is called the rotation numbers, which is uniquely determined 
up to a change of order or a simultaneous change of sign. When the $\Z_n$-action preserves an almost complex structure (e.g. being symplectic or holomorphic), the complex structure on the tangent spaces 
picks up a canonical sign so that the rotation numbers are uniquely determined only up to a change of
order in this case. 

With the preceding understood, let $F_r(a,b)$ be the $G$-Hirzebruch surface $F_r$ 
with a pseudo-free cyclic automorphism group $G$, such that after fixing an appropriate 
generator of $G$,  the rotation numbers at the four isolated fixed points are $(a,\pm b)$, 
$(-a, \pm (b+ra))$, with the second number in each pair standing for the weight in the fiber 
direction (cf. \cite{Wil2}, \S 4, for the precise definition). We shall consider a pair of examples: 
$F_1(1,3)$ and $F_{11}(3,1)$, with $G=\Z_7$. Clearly, $F_1(1,3)$ contains a $G$-invariant 
holomorphic $(-1)$-sphere, i.e., the zero-section $E_0$. Moreover, $F_1(1,3)$ and $F_{11}(3,1)$ 
have the same set of rotation numbers. To see this, we note that the rotation numbers of $F_1(1,3)$ 
are $(1,\pm 3)$, $(-1, \pm 4)$, while the rotation numbers of $F_{11}(3,1)$ are $(3,\pm 1)$, 
$(-3, \pm 34)=(4, \mp 1)$.  After a simultaneous change of sign on $(3,-1)$ and $(4,1)$, the rotation 
numbers of $F_{11}(3,1)$ match up exactly with the rotation numbers of $F_1(1,3)$ as unordered pairs. 

We claim that $F_{11}(3,1)$ does not contain any $G$-invariant locally linear $(-1)$-sphere (while
$F_1(1,3)$  contains a $G$-invariant holomorphic $(-1)$-sphere). The reason is that, if 
$F_{11}(3,1)$ contain a $G$-invariant locally linear $(-1)$-sphere, we can equivariantly blow down 
both $F_{11}(3,1)$ and $F_1(1,3)$ to get two locally linear, pseudo-free $\Z_7$-actions on 
$\C\P^2$ (cf. \cite{Fr}), which have the same rotation numbers. By Theorem 4.1 in \cite{Wil1},
the two $\Z_7$-actions on $\C\P^2$ are equivariantly homeomorphic. 
This then implies that $F_{11}(3,1)$ and $F_1(1,3)$ are equivariantly homeomorphic. However,
by Theorem 4.2(2) in \cite{Wil2}, $F_1(1,3)$, $F_{11}(3,1)$ are equivariantly diffeomorphic to
$F_7(1,3)$ and $F_7(3,1)$ respectively, and by Theorem 4.11(2) in \cite{Wil2}, $F_7(1,3)$ and 
$F_7(3,1)$ are not equivariantly homeomorphic, which is a contradiction. 

With the preceding understood, the assumption on the existence of a $G$-invariant locally linear
$(-1)$-sphere in Theorem 1.1 enters in the proof in such a way that it gives an alternative proof
that $F_{11}(3,1)$ does not contain any $G$-invariant locally linear $(-1)$-sphere. See Lemma 3.3
and Remark 3.4 for more details. 

\end{example}

Let $C$ be the $G$-invariant locally linear $(-1)$-sphere in $X$. Since the $G$-action is pseudo-free,
the induced action on $C$ must be effective and $C$ contains exactly two fixed points 
of the $G$-action on $X$. We shall orient $C$ so that the class of $C$ equals $e_1$, and with this 
choice of orientation on $C$, the rotation numbers of the $G$-action at the two fixed points contained 
in $C$ can be written as unordered pairs $(1, a)$ and $(-1,a+1)$ for some $a\in\Z$ mod $n$ after 
fixing an appropriate generator of $G$, where the second number in each pair stands for the weight in
the normal direction. (Note that no simultaneous change of sign is allowed here as the orientation of
$C$ is fixed.) We denote by $q_1,q_2$ the fixed points whose rotation numbers are $(1, a)$, 
$(-1,a+1)$ respectively. We shall fix the above generator of $G$ for the rest of this section and
in the next one, with all the rotation numbers or weights in reference of this generator of $G$. 
Finally, we observe that since the $G$-action is pseudo-free, the order $n$ of $G$ must be odd as both $a$ and $a+1$ are co-prime to $n$ and one of them is even.

\begin{lemma}
There is a $G$-equivariant complex line bundle $E$ over $X$ such that (i) $c_1(E)=e_1$, (ii)
the weights of the $G$-action on the fibers $E_{q_1}$, $E_{q_2}$ are $a$ and $a+1$ respectively,
and are zero at the other fixed points of the $G$-action.
\end{lemma}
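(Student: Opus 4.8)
The plan is to construct $E$ as a $G$-equivariant line bundle with prescribed first Chern class and prescribed fiber weights at the fixed points, which is exactly the data classified by equivariant $K$-theory or, more concretely, by equivariant Chern classes in the Borel construction. Since the $G$-action on $X=\C\P^2\#\overline{\C\P^2}$ is pseudo-free, the relevant obstruction theory is governed by the fixed-point data, so I would reduce the statement to a cohomological existence problem. First I would recall that isomorphism classes of $G$-equivariant complex line bundles over $X$ are classified by the equivariant first Chern class in $H^2_G(X;\Z)=H^2(X\times_G EG;\Z)$; the forgetful map to $H^2(X;\Z)$ records the ordinary class $c_1(E)=e_1$, while the restriction to a fixed point $q$ contributes the weight of the $G$-representation on $E_q$, viewed as an element of $H^2_G(\mathrm{pt};\Z)=H^2(BG;\Z)\cong\Z_n$.

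Second, I would set up the two constraints simultaneously. The weight data requested, namely $a$, $a+1$, $0$, $0$ at the four fixed points $q_1,q_2$ and the remaining two, must be \emph{compatible} with the chosen ordinary class $e_1$; compatibility is precisely the localization/congruence condition that the weights restrict correctly once one fixes how $e_1$ pairs with the fixed-point data. The natural way to verify this is to start from a geometric model: the $G$-action already provides an invariant $(-1)$-sphere $C$ in class $e_1$ through $q_1,q_2$, and the weights of the $G$-action in the normal direction to $C$ at $q_1,q_2$ are $a$ and $a+1$ by our choice of orientation and generator. I would therefore take $E$ to be (a $G$-equivariant lift of) the line bundle $\O_X(C)$ associated to the invariant divisor $C$, whose ordinary first Chern class is $[C]=e_1$ by construction.

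Third, I would pin down the weights. With $E=\O_X(C)$ for the invariant locally linear sphere $C$, the fiber $E_q$ at a fixed point $q$ carries a canonical $G$-action: at $q_1,q_2\in C$, the adjunction/normal-bundle identification shows that $E_q$ is identified with the normal line to $C$, whose weights are exactly $a$ and $a+1$; at the two fixed points \emph{not} lying on $C$, the divisor does not meet $q$, so $E_q$ is canonically trivial as a $G$-module and the weight is $0$. If $\O_X(C)$ does not literally admit the weights as stated on the nose, I would correct it by tensoring with a $G$-equivariant line bundle that is topologically trivial (ordinary $c_1=0$) but carries prescribed fixed-point weights; the existence of such correcting bundles is again an equivariant-$K$-theory statement, reduced to solving a system of congruences mod $n$ for the weights subject to the single relation imposed by the ordinary triviality.

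The main obstacle I expect is the compatibility bookkeeping in this last step: one must check that the four requested weights $(a,a+1,0,0)$ are jointly realizable alongside the fixed ordinary class $e_1$, i.e.\ that they satisfy the localization constraints coming from $H^2_G(X)\to\bigoplus_q H^2_G(q)$. This is where the hypothesis $n$ odd (forced above, since one of $a$, $a+1$ is even and both are coprime to $n$) and the explicit structure of the fixed-point data of the pseudo-free action must be used to guarantee that no obstruction survives. Concretely, I would verify that the difference between the weights induced by $\O_X(C)$ and the target weights lies in the image of the restriction map from topologically trivial equivariant bundles, which amounts to a solvable linear system over $\Z_n$; the oddness of $n$ and the coprimality of $a$, $a+1$ to $n$ are exactly what make the relevant residues invertible and the system solvable.
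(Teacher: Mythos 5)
Your core construction is essentially the paper's, stripped of its explicit coordinates. The paper builds the same bundle by hand: it takes the line bundle $\s^5\times_{\s^1}\C$ over $\C\P^2$ with an explicit lift $\mu\cdot(z_0,z_1,z_2,t)=(\mu^a z_0,\mu^{a+1}z_1,z_2,t)$ of the linear action $[z_0:\mu z_1:\mu^{-a}z_2]$, checks the fiber weights at the three fixed points are $a$, $a+1$, $0$, reverses orientation, identifies a $G$-invariant regular neighborhood of $C$ with one of the line $\{z_2=0\}$ in $\overline{\C\P^2}$, and extends trivially and equivariantly over the rest of $X$ using the equivariant section $(z_0,z_1,z_2)\mapsto(z_0,z_1,z_2,z_2^{-1})$, whose pole along $\{z_2=0\}$ also yields $c_1(E)=e_1$. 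Your version --- take the topological $\O_X(C)$ built from the Freedman--Quinn equivariant normal bundle of $C$, glued to the trivial bundle off $C$ via the tautological (equivariant) section --- is the same gluing, and your weight computation in the third step (normal weights $a$, $a+1$ on $C$, canonical trivialization hence weight $0$ off $C$) matches the paper's. The only inputs you should make explicit are the $G$-equivariant normal bundle (Freedman--Quinn, which the paper cites since $C$ is merely locally linear, not a divisor in any holomorphic sense) and a $G$-invariant complex structure on that $SO(2)$-bundle compatible with the orientation conventions fixing the signs of $a$ and $a+1$.

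Your fallback in the fourth step, however, is wrong as stated, and it is fortunate that your main line never needs it. Since $X$ is simply connected with $H^1(X)=0$ and the action is homologically trivial, the Borel spectral sequence gives that the kernel of the forgetful map $H^2_G(X;\Z)\to H^2(X;\Z)$ is exactly the image of $H^2(BG;\Z)\cong\Z_n$, i.e., topologically trivial $G$-equivariant line bundles are precisely pullbacks of characters of $G$. Tensoring with such a bundle shifts \emph{all} fixed-point weights by the same constant mod $n$; there is no ``solvable linear system over $\Z_n$'' letting you adjust the weights at the four fixed points independently. Relatedly, your closing appeal to $n$ odd and the coprimality of $a$, $a+1$ to $n$ is a red herring for this lemma: those facts reflect pseudo-freeness of the action and play no role in the bundle construction, which (as the paper's proof shows) requires no hypothesis on $n$ at all.
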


\begin{proof}
We shall define $E$ in a neighborhood of $C$ first. To this end, we consider the following 
$\Z_n$-action on $\C\P^2$:

$$
\mu\cdot [z_0:z_1:z_2]= [z_0: \mu z_1:\mu^{-a} z_2], \mbox{ where } \mu=\exp(2\pi /n).
$$
Note that the local representations at the fixed points $[1:0:0], [0:1:0]$ are $(1,-a)$ and
$(-1, -a-1)$ respectively. 

Consider the complex line bundle $E$ on $\C\P^2$, where $E=\s^5\times_{\s^1} \C$,
with the $\s^1$-action on $\s^5\times \C$ given by 
$$
\lambda\cdot (z_0,z_1,z_2,t )=(\lambda z_0,\lambda z_1,\lambda z_2, \lambda^{-1} t),\;\;
\lambda\in \s^1,\;\; t\in\C.
$$
There is a specific lifting of the $\Z_n$-action on $\C\P^2$ to $E$, given on $\s^5\times \C$ by
$$
\mu\cdot (z_0,z_1,z_2,t)= (\mu^a z_0, \mu^{a+1} z_1, z_2,t). 
$$
It is easy to see that the weight of the action on the fiber at $[1:0:0]$ is $a$,
on the fiber at $[0:1:0]$ is $a+1$, and on the fiber at $[0:0:1]$ is $0$. 

Now we give $\C\P^2$ the opposite orientation and consider $E$ as a $G$-equivariant complex
line bundle over $\overline{\C\P^2}$. The $G$-equivariant section of $E$ given by 
$(z_0,z_1,z_2)\mapsto (z_0,z_1,z_2,z_2^{-1})$ has a pole at $z_2=0$ and defines a 
trivialization of $E$ in the complement of it.  If we fix an orientation-preserving identification between
a $G$-invariant regular neighborhood of $C$ in $X$ and a $G$-invariant regular
neighborhood of $z_2=0$ in 
$\overline{\C\P^2}$, $E$ defines a $G$-equivariant complex line bundle in a $G$-invariant 
neighborhood of $C$ which can be extended trivially and $G$-equivariantly over the rest of $X$.

It remains to verify that $E$ has the properties (i) and (ii). The latter is clear from the construction
of $E$. As for (i), we note that $E$ as a complex line bundle over $\C\P^2$ admits a non-vanishing section with a pole at the complex line $z_2=0$, so that $c_1(E)$ is Poincar\'{e} dual to the
negative of the class of complex lines in $\C\P^2$. After reversing the orientation of the manifold,
$c_1(E)$ is Poincar\'{e} dual to the class of complex lines, from which it follows easily that
$c_1(E)=C=e_1$. 

\end{proof}

The assumption on the existence of a $G$-invariant locally linear $(-1)$-sphere comes into play 
through the following lemma.

\begin{lemma}
For any $G$-invariant $\omega$-compatible almost complex structure $J$ on $X$, the rotation numbers
determined by the corresponding complex structure on the tangent spaces are $(1,a)$ and $(-1,a+1)$
at $q_1,q_2$ respectively, and $(1,-a)$ and $(-1,-a-1)$ at the other two fixed points. 
\end{lemma}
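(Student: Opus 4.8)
The plan is to determine the $J$-rotation numbers at all four fixed points by comparing two instances of the Atiyah--Bott holomorphic Lefschetz (equivalently $\mathrm{Spin}^c$-Dirac) fixed-point formula, exploiting that the line bundle $E$ of Lemma 3.2 has been rigged to carry the weights $a,a+1$ at $q_1,q_2$ and weight $0$ at the remaining fixed points. First I would isolate the purely local input. At each fixed point the linearized $G$-action splits $T_qX$ canonically into its two rotation $2$-planes, and a $G$-invariant, orientation-compatible $J$ restricts to each plane as $\pm$ the standard rotation, with the two signs forced to agree by compatibility with the orientation of $X$. Hence the locally linear data $(1,a)$ at $q_1$ and $(-1,a+1)$ at $q_2$ leaves precisely the ambiguity that the $J$-rotation numbers $(r_1,s_1)$ at $q_1$ lie in $\{(1,a),(-1,-a)\}$ and those $(r_2,s_2)$ at $q_2$ lie in $\{(-1,a+1),(1,-a-1)\}$. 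The whole content of the lemma at $q_1,q_2$ is to exclude the two ``minus'' branches; the rotation numbers at the other two fixed points $q_3,q_4$ must then be pinned down separately.

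The key step is as follows. Write $\zeta=\exp(2\pi i/n)$, let $(r_i,s_i)$ be the $J$-rotation numbers at $q_i$ and $w_i$ the weight of $E$ at $q_i$, so $(w_1,w_2,w_3,w_4)=(a,a+1,0,0)$. Applying the fixed-point formula to the structure sheaf $\mathcal{O}$ and to $E$ and subtracting gives, for every nontrivial $g\in G$,
\[
\sum_{i=1}^{4}\frac{\zeta^{w_i}-1}{(1-\zeta^{-r_i})(1-\zeta^{-s_i})}
=\mathrm{ch}\,\mathrm{Ind}_G(E)-\mathrm{ch}\,\mathrm{Ind}_G(\mathcal{O}).
\]
Because $w_3=w_4=0$, the summands at $q_3,q_4$ vanish, so the left-hand side depends only on the sign choices at $q_1,q_2$. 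The right-hand side is a virtual character whose virtual dimension is $\chi(E)-\chi(\mathcal{O})=1-1=0$, both Euler characteristics being $1$ by Hirzebruch--Riemann--Roch. I would then evaluate the two surviving terms in each of the four branches: the branch $(1,a),(-1,a+1)$ makes them cancel identically, giving the admissible value $0$, whereas the branches $(1,a),(1,-a-1)$, $(-1,-a),(-1,a+1)$ and $(-1,-a),(1,-a-1)$ reduce (after clearing the $\zeta^{\pm1}$ factors) to the Laurent polynomials $\zeta+\cdots+\zeta^{a}$, $-(1+\zeta+\cdots+\zeta^{a})$ and $-1$, with coefficient sums $a$, $-(a+1)$ and $-1$. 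Since two index characters agreeing at all nontrivial group elements differ only by a multiple of the regular representation, the true virtual dimension $0$ forces the coefficient sum of the reduced polynomial to be $\equiv 0 \pmod n$; as $\gcd(a,n)=\gcd(a+1,n)=1$ and $n\ge 3$ is odd, none of $a,-(a+1),-1$ satisfies this, so the three minus-branches are excluded. This proves the claim at $q_1,q_2$.

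For $q_3,q_4$ I would feed the now-known data at $q_1,q_2$ back into the untwisted formula for $\mathcal{O}$ together with the $G$-signature theorem; here $\mathrm{sign}(g,X)=0$ for all $g$, since the action is homologically trivial and $\mathrm{sign}(X)=0$. The signature identity, being invariant under simultaneous sign reversal of a rotation pair, determines the unordered-up-to-sign pairs at $q_3,q_4$, and the holomorphic Lefschetz identity for $\mathcal{O}$, which is sensitive to that reversal, then fixes the signs and yields $(1,-a)$ and $(-1,-a-1)$.

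The main obstacle, I expect, is not the algebra above but justifying its inputs in the merely almost-complex (non-integrable) setting: one must use the $G$-index theorem for the $\mathrm{Spin}^c$-Dirac operator of the canonical $\mathrm{Spin}^c$ structure rather than a genuine Dolbeault complex, and verify the asserted values of the equivariant indices (dimension $1$ in each case, and, for the $q_3,q_4$ step, that $\mathrm{Ind}_G(\mathcal{O})$ is the trivial representation, coming from the constants in degree $0$ and the vanishing of the higher terms for the rational surface $X$). A secondary technical point is the degenerate case in which the two rotation weights at a fixed point coincide, where the canonical splitting into rotation $2$-planes requires care; the standing hypotheses that $n$ is odd and $a,a+1$ are coprime to $n$ should be enough to control it.
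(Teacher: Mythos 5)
Your analysis at $q_1,q_2$ is correct, and despite the index-theoretic packaging it is in substance the paper's own argument. The paper works with the same bundle $E$ of Lemma 3.2 and rules out the wrong sign branches by showing the virtual dimension $d(E)$ of the moduli space of $G$-invariant Seiberg--Witten solutions is non-integral; your four branch values $0$, $\zeta+\cdots+\zeta^{a}$, $-(1+\zeta+\cdots+\zeta^{a})$, $-1$ are exactly the local sums whose averages $\frac{2}{n}\sum_{x\neq 0}(\cdot)$ reproduce the paper's $d(E)=0$ and $d(E)=\frac{2(a+1-n)}{n}$, etc., and your criterion ``coefficient sum $\equiv$ virtual dimension $\pmod n$'' is the same integrality statement (in fact marginally cleaner, since it avoids the factor of $2$ and hence the appeal to $n$ being odd). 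I checked your branch computations and they are right. So the first half stands, modulo the $G$-$\mathrm{Spin}^c$ index-theorem bookkeeping you yourself flag, which is standard.

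The genuine gap is in your second step, where you diverge from the paper. First, you need $\mathrm{Ind}_G(\mathcal{O})$ to be the trivial character, and in the merely almost-complex setting there is no justification: there is no Dolbeault complex, and the equivariant $\mathrm{Spin}^c$ index is computed precisely by the fixed-point data you are trying to determine, so asserting its character value is circular. Nor can you retreat to the weaker integrality that sufficed at $q_1,q_2$: the difference of the untwisted local terms for the two sign choices at the third fixed point is $\frac{1}{(1-\zeta)(1-\zeta^{-a})}-\frac{1}{(1-\zeta^{-1})(1-\zeta^{a})}$, and the substitution $x\mapsto -x$ shows its average over $G$ vanishes identically, so the invariant-part dimension of the untwisted index is blind to the sign. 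This is exactly why the paper does not use $\mathcal{O}$ here: it manufactures a second equivariant bundle $L$ with $c_1(L)=F$ and weight $+1$ at both $q_1$ and the unknown point $q_1^\prime$, and to build $L$ with that weight structure it first shows $X$ is equivariantly homeomorphic to $F_1(1,a)$, by blowing down $C$ topologically (Freedman) and invoking Wilczynski's theorem that pseudo-free locally linear actions on $\C\P^2$ are standard --- an input your plan has no substitute for. Second, your claim that the $G$-signature identity ($\mathrm{sign}(g,X)=0$ for all $g$) ``determines'' the unordered-up-to-sign pairs at $q_3,q_4$ is asserted, not proved; recovering fixed-point data from cotangent sums is a delicate Dedekind-sum type problem and uniqueness requires an argument. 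Finally, note a structural limitation: identities of your kind are sums over fixed points and cannot see which pair sits at which point. The paper's own pair $F_1(1,3)$ and $F_{11}(3,1)$ with $n=7$ are homologically trivial actions realizing the identical multiset of rotation pairs $\{1,3\},\{1,4\},\{6,4\},\{6,3\}$, so the signature and untwisted-index characters agree on both; at best your method recovers the multiset at the remaining two points, whereas the paper's proof additionally pins down which pair lies on which invariant fiber --- the finer information actually exploited in Remark 3.4 and in the subsequent weight computations for $C_0$.
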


\begin{proof}
We first verify the assertion of the lemma for the fixed points $q_1,q_2$. We shall accomplish this by
computing the virtual dimension of the moduli space of Seiberg-Witten equations associated to $E$, 
which is denoted by $d(E)$, and show that it is integral only when the rotation numbers at $q_1,q_2$ 
are as claimed.

The formula for $d(E)$ is given in \cite{C2}, Appendix A (see also \cite{C1}, Lemma 3.3), according to
which
$$
d(E)=\frac{1}{n} (c_1(E)^2-c_1(E)\cdot c_1(K_\omega))+ I_{q_1}+I_{q_2}=I_{q_1}+I_{q_2},
$$
where $ I_{q_1},I_{q_2}$ are contributions from the fixed points $q_1,q_2$. There are no contributions 
from the other fixed points because the weights of the $G$-action on the corresponding fibers of $E$ are zero by Lemma 3.2. 

Suppose the rotation numbers at $q_1$ are $(1,a)$, then $I_{q_1}$ is given by 
$$
I_{q_1}=\frac{1}{n} \sum_{x=1}^{n-1} \frac{2(\mu^{ax}-1)}{(1-\mu^{-x})(1-\mu^{-ax})},
\mbox{ where  } \mu=\exp(2\pi/n).
$$
Similarly, if the rotation numbers at $q_2$ are $(-1,a+1)$, then
$$
I_{q_2}=\frac{1}{n} \sum_{x=1}^{n-1} \frac{2(\mu^{(a+1)x}-1)}{(1-\mu^{x})(1-\mu^{-(a+1)x})}.
$$
One can easily check that $I_{q_1}=-I_{q_2}$, and it follows that $d(E)=0$ in this case. 

Suppose the rotation numbers at $q_1$ are $(-1,-a)$ instead, then
$$
I_{q_1}=\frac{1}{n} \sum_{x=1}^{n-1} \frac{2(\mu^{ax}-1)}{(1-\mu^{x})(1-\mu^{ax})}=
-\frac{2}{n} \sum_{x=1}^{n-1} \frac{1}{(1-\mu^{x})}=-\frac{n-1}{n}.
$$
If the rotation numbers at $q_2$ are still $(-1,a+1)$, then (cf. \cite{C1}, Example 3.4)
$$
I_{q_2}=\frac{1}{n} \sum_{x=1}^{n-1} \frac{2(\mu^{(a+1)x}-1)}{(1-\mu^{x})(1-\mu^{-(a+1)x})}
=\frac{1}{n} \sum_{x=1}^{n-1} \frac{2\mu^{(a+1)x}}{1-\mu^{x}}=\frac{-(n-1)+2a}{n}, 
$$
where $a$ is the unique integer satisfying $0\leq a<n$ for the given congruence mod $n$ class.
With this, 
$$
d(E)=I_{q_1}+I_{q_2}=-\frac{n-1}{n} +\frac{-(n-1)+2a}{n}=\frac{2(a+1-n)}{n},
$$
which is non-integral because $n$ is odd and 
$a+1\neq 0 \pmod{n}$. One can similarly verify that in all other cases, i.e., when the rotation 
numbers are $(-1,-a), (1, -a-1)$, or $(1,a), (1,-a-1)$, $d(E)$ is non-integral. This proves the 
assertion for $q_1,q_2$. 

For the rest of the fixed points, we use the same strategy but with consideration of some different
$G$-equivariant complex line bundles. Recall that, as we argued in Example 3.1, it is easily seen that
the existence of the $G$-invariant locally linear $(-1)$-sphere $C$ implies that $X$ is equivariantly homeomorphic to the $G$-Hirzebruch surface $F_1(1,a)$ such that the class $e_1\in H^2(X)$ is 
sent to the class of the $(-1)$-section in $F_1(1,a)$ under the equivariant homeomorphism. 
Furthermore, since the complex conjugation on $\C\P^2$ defines an orientation-preserving involution 
$\tau$ which acts as $-1$ on the second cohomology, it follows easily that, with a further application 
of $\tau$ if necessary, one can arrange to have the class $e_0\in H^2(X)$ sent to the class of the 
$(+1)$-section in $F_1(1,a)$. Consequently, the fiber class $F=e_0-e_1\in H^2(X)$ is sent to the fiber 
class of $F_1(1,a)$ under the equivariant homeomorphism.

With the preceding understood, we consider the $G$-equivariant complex line bundle $L$ on
$X$ defined as follows. Let $\pi: F_1(1,a) \rightarrow B=\s^2$ be the holomorphic $\s^2$-fibration, and
let $F_1,F_2$ be the two invariant fibers containing the fixed points with rotation numbers $(1,\pm a)$,
$(-1,\pm (a+1))$ respectively, and let $b_i=\pi(F_i)\in B$, $i=1,2$. Note that the fixed point $q_i$,
for $i=1,2$, is contained in the preimage of $F_i$ in $X$; denote the other fixed point contained in
the preimage of $F_i$ by $q_i^\prime$, $i=1,2$. 

There is a $G$-equivariant complex line bundle $L^\prime$ on $B$, such that the weight of the 
$G$-action on the fiber $L^\prime_{b_1}$ equals $+1$ and the weight on the fiber $L^\prime_{b_2}$ equals $0$, and $L^\prime$ has degree $1$. With this understood, the $G$-bundle $L$ on $X$ is the pull-back of $\pi^\ast(L^\prime)$ via the equivariant homeomorphism from $X$ to $F_1(1,a)$; it has weight $+1$ on the fibers at $q_1,q^\prime_1$, and weight $0$ on the fibers at $q_2,q^\prime_2$. Moreover, $c_1(L)=F$. 

The virtual dimension of the moduli space of Seiberg-Witten equations associated to $L$ is
$$
d(L)=\frac{1}{n} (c_1(L)^2-c_1(L)\cdot c_1(K_\omega))+ I_{q_1}+I_{q_1^\prime}
=\frac{2}{n}+I_{q_1}+I_{q^\prime_1},
$$
where $ I_{q_1},I_{q_1^\prime}$ are contributions from the fixed points $q_1,q_1^\prime$. There 
are no contributions from $q_2,q^\prime_2$ because the weights of the $G$-action on the 
corresponding fibers of $L$ are zero. 
Since the rotation numbers at $q_1$ are $(1,a)$ and the weight of the $G$-action is $+1$ on $L_{q_1}$,
$$
I_{q_1}=\frac{1}{n} \sum_{x=1}^{n-1} \frac{2(\mu^{x}-1)}{(1-\mu^{-x})(1-\mu^{-ax})}
=\frac{2}{n} \sum_{x=1}^{n-1} \frac{\mu^{x}}{1-\mu^{-ax}}=\frac{n-1-2b}{n},
$$
where $ab=1\pmod{n}$ and $0<b<n$ (cf. \cite{C1}, Example 3.4). Since $(1,-a)$ and $(-1,a)$ are
the same as unordered pairs when $a=1\pmod{n}$, we shall assume without loss of generality that 
$a\neq 1\pmod{n}$ in the calculations below. 

Suppose the rotation numbers at $q_1^\prime$ are $(1,-a)$. Then with the weight of the $G$-action 
being $+1$ on $L_{q^\prime_1}$, we have 
$$
I_{q_1^\prime}=\frac{1}{n} \sum_{x=1}^{n-1} \frac{2(\mu^{x}-1)}{(1-\mu^{-x})(1-\mu^{ax})}
=\frac{2}{n} \sum_{x=1}^{n-1} \frac{\mu^x}{1-\mu^{ax}}=\frac{n-1-2b^\prime}{n},
$$
where $ab^\prime=-1\pmod{n}$ and $0<b^\prime<n$ (cf. \cite{C1}, Example 3.4).
Observing that $b+b^\prime=0\pmod{n}$, we have 
$$
d(L)=\frac{2}{n}+\frac{n-1-2b}{n}+\frac{n-1-2b^\prime}{n}=\frac{2(n-b-b^\prime)}{n}\in\Z.
$$
However, if the rotation numbers at $q_1^\prime$ are $(-1,a)$ instead, we have 
$$
I_{q_1^\prime}=\frac{1}{n} \sum_{x=1}^{n-1} \frac{2(\mu^{x}-1)}{(1-\mu^{x})(1-\mu^{-ax})}
=-\frac{2}{n} \sum_{x=1}^{n-1} \frac{1}{1-\mu^{-ax}}=-\frac{n-1}{n}.
$$
In this case, 
$$
d(L)=\frac{2}{n}+\frac{n-1-2b}{n}-\frac{n-1}{n}=\frac{2(1-b)}{n},
$$
which is non-integral
because $b\neq 1$. Hence the rotation numbers at $q_1^\prime$ must be $(1,-a)$.  A similar argument proves that the rotation numbers at $q_2^\prime$ must be $(-1, -a-1)$, which finishes 
off the proof. 

\end{proof}

\begin{remark}
If we apply Lemma 3.3 to $F_{11}(3,1)$, we see immediately that $F_{11}(3,1)$ contains no
$G$-invariant locally linear $(-1)$-spheres, because the rotation numbers are
$$
(3,1), (3,-1), (4, -1), (4,1).
$$
With $a=3$, we find that the second and the last pairs have the wrong sign.

\end{remark}

With the above preparation on the rotation numbers, we now prove that, by assuming $J$ is generic,
the $J$-holomorphic section $C_0$ of the $\s^2$-fibration in Lemma 2.3(2) must be a $(-1)$-sphere. 

To this end, we recall the formula for the virtual dimension of the moduli space of $J$-holomorphic 
curves in a $4$-orbifold, applied here to the curve $C_0/G$ in the $4$-orbifold $X/G$. Let $f:
\Sigma\rightarrow X/G$ be a $J$-holomorphic parametrization of $C_0/G$, where $\Sigma$ is the 
orbifold Riemann two-sphere with two orbifold points $z_i$, $i=1,2$, of order $n$, and let 
$(\hat{f}_{z_i},\rho_{z_i}): (D_i,\Z_n)\rightarrow (V_i,G)$ be a local representative of $f$ at $z_i$, 
where the action of $\rho_{z_i}(\mu)$ on $V_i$ is given by 
$$
\rho_{z_i}(\mu)\cdot (w_1,w_2) = (\mu^{m_{i,1}}w_1,\mu^{m_{i,2}} w_2),
\mbox{ with } \mu=\exp(2\pi/n), \; 0\leq m_{i,1},m_{i,2}<n.
$$
Then the virtual dimension of the moduli space of $J$-holomorphic curves at 
$C_0/G$ is given by $2d$, where 
$$
d=-\frac{c_1(K_\omega)\cdot C_0}{n}+ 1-\sum_{i=1}^2\frac{m_{i,1}+m_{i,2}}{n}.
$$
See \cite{CR}, Lemma 3.2.4. With this understood, we recall that the transversality theorem 
(cf. \cite{C1}, Lemma 1.10) for the moduli space of $J$-holomorphic curves implies that $d\geq 0$
for a generic $J$. 

For our purpose, we would like to express $d$ in terms of the self-intersection number of $C_0$.
To this end, we apply the adjunction formula (\cite{C}, Theorem 3.1) to $C_0/G$, and as 
$C_0$ is embedded (equivalently, $C_0/G$ is embedded), we obtain
$$
\frac{1}{2n}(C_0^2+c_1(K_\omega)\cdot C_0)+1=2(\frac{1}{2}-\frac{1}{2n})=1-\frac{1}{n},
$$
which gives the desired expression for $d$:
$$
d=\frac{1}{n} C_0^2+\frac{2}{n}+1-\sum_{i=1}^2\frac{m_{i,1}+m_{i,2}}{n}
$$

In order to compute $d$, we need to locate the two fixed points on $C_0$. By looking at the rotation numbers, it is clear that $q_1,q_1^\prime$ and $q_2,q_2^\prime$ are contained in two distinct invariant fibers of the $\s^2$-fibration in Lemma 2.3(2). It follows easily that the following are the only possibilities 
for the fixed points on $C_0$:
$$
\mbox{(a) $q_1,q_2\in C_0$; \;\; (b) $q_1,q_2^\prime\in C_0$; \;\; (c) $q_1^\prime,q_2\in C_0$; \;\;
(d) $q_1^\prime,q_2^\prime\in C_0$.}
$$
With this understood, the weights $(m_{i,1},m_{i,2})$, $i=1,2$, in the formula for $d$ can be read off 
from the rotation numbers (since $C_0$ is embedded and is a section) and are given correspondingly 
as follows:
\begin{itemize}
\item [{(a)}] $(m_{1,1},m_{1,2})=(1,a)$, $(m_{2,1},m_{2,2})=(1, n-a-1)$;
\item [{(b)}] $(m_{1,1},m_{1,2})=(1,a)$, $(m_{2,1},m_{2,2})=(1, a+1)$;
\item [{(c)}] $(m_{1,1},m_{1,2})=(1,n-a)$, $(m_{2,1},m_{2,2})=(1, n-a-1)$;
\item [{(d)}] $(m_{1,1},m_{1,2})=(1,n-a)$, $(m_{2,1},m_{2,2})=(1, a+1)$
\end{itemize}
where $a$ satisfies the inequality $0<a<n-1$. Correspondingly, we have
$$
(a) \;d=\frac{C_0^2+1}{n}, \; (b) \; d=\frac{C_0^2+n-2a-1}{n}, \; (c)\;  
d=\frac{C_0^2+2a+1-n}{n},  \;(d)\;  d=\frac{C_0^2-1}{n}.
$$
If $J$ is generic, we have $d\geq 0$, so that with the fact that $C_0^2<0$, we obtain
$$
(a) \; C_0^2=-1, \; (b) \; C_0^2= -n+2a+1, \; (c)\;  C_0^2=-2a-1+n,
$$ 
and case (d) is a contradiction. Furthermore, since $n$ is odd, (b) and (c) can be ruled out by the 
fact that $C_0^2$ is odd (cf. Lemma 2.3(2)). This shows that $C_0$ is a $G$-invariant 
$J$-holomorphic $(-1)$-sphere when $J$ is generic. Thus Theorem 1.1 is proved for the case of
pseudo-free actions. 

\section{Invariant $(-1)$-spheres of non-pseudo-free actions}

For non-pseudo-free actions, the argument in the previous section broke down in a couple of places. 
One of them is Lemma 3.3, where a theorem of Wilczynski \cite{Wil1} asserting that a pseudo-free 
locally linear action on $\C\P^2$ is equivalent to a linear action was used. Perhaps the most serious 
obstacle in the non-pseudo-free case is the failure of the argument for ruling out the cases (b) and (c) 
at the end of the proof. The argument relies on the fact that the order $n$ of $G$ is odd, which is no 
longer true for a non-pseudo-free action in general. 

In this section, we shall give a different proof for Lemma 3.3 for non-pseudo-free actions, 
and rescue the argument of ruling out (b) and (c) for the case of $n>2$ and even by 
exploiting the smoothness assumption of the invariant $(-1)$-sphere $C$. 

For the first part, we continue to assume that $C$ is a $G$-invariant locally linear 
$(-1)$-sphere in $X$. 
If there exists a $g\in G$ which acts trivially on $C$, then $C$, as a $2$-dimensional fixed component of 
$g$, is naturally a smooth, $\omega$-symplectic $(-1)$-sphere, and Theorems 1.1 and 1.2 are trivially 
true. So without loss of generality, we assume the induced action of $G$ on
$C$ is effective. Then as in the previous section, $C$ contains exactly two fixed points of the $G$-action 
on $X$. We shall orient $C$ so that the class of $C$ equals $e_1$, and with this choice of orientation 
on $C$, the rotation numbers of the $G$-action at the two fixed points contained in $C$, continued to
be denoted by $q_1,q_2$, can be written as unordered pairs $(1, a)$ and $(-1,a+1)$ for some 
$a\in\Z$ mod $n$ after fixing an appropriate generator $\mu\in G$, with the second number in each pair 
standing for the weight in the normal direction. The only difference from the pseudo-free case is that
the weights $a$, $a+1$ are no longer required to be co-prime to $n$, and consequently, $n$ may be 
an even integer in this case. Finally, since the pseudo-free case has been dealt with in the previous
section, we shall consider exclusively the non-pseudo-free case, i.e., there is a $2$-dimensional fixed component $\Sigma$ of some element $\kappa\in G$.

Our first observation is that the induced $G$-action on the base $\s^2$ of the $\s^2$-fibration from
Lemma 2.3(2) must be effective. To see this, suppose to the contrary that there is an element $h\in G$ 
which acts trivially on the base $\s^2$. Then $h$ must fix two $J$-holomorphic sections, denoted by $E_0,E_\infty$, of the $\s^2$-fibration. Furthermore, it is easy to see that every fixed point of $G$ is contained in $E_0$ or $E_\infty$; in particular, $q_1,q_2\in E_0\cup E_\infty$. Now note that the 
weights of the action of $h$ at $q_1,q_2$ can be written as $(k, ka)$ and $(-k, k(a+1))$ for some 
$k\neq 0 \pmod{n}$. However, as $q_1,q_2\in E_0\cup E_\infty$, we must have $ka=k(a+1)=0 \pmod{n}$, which implies $k=0\pmod{n}$, a contradiction. Hence the claim follows. As a consequence, the
$\s^2$-fibration has exactly two $G$-invariant fibers, with $\Sigma$ being one of them; in particular, 
$\Sigma=F=e_0-e_1$. We denote the other $G$-invariant fiber by $\Sigma^\prime$. 

Before we proceed further, recall that the intersection theory works for locally flat, topologically 
embedded surfaces in a topological $4$-manifold (cf. Freedman-Quinn \cite{FQ}). With this 
understood, observe that $C\cdot \Sigma=e_1\cdot F=1$, so that $C\cap \Sigma\neq \emptyset$. 
This implies that either $q_1$ or $q_2$ must be contained in $\Sigma$. Without loss of generality, we assume $q_1\in \Sigma$. Writing $\kappa=\mu^k$ where $\mu$ is the fixed generator of $G$, we see 
that the weights of the action of $\kappa$ at $q_1$ are $(k,ka)$, implying $ka=0\pmod{n}$. Consequently, the weights of the action of $\kappa$ at $q_2$ are $(-k, k(a+1))=(-k,k)$, which implies that $q_2$ is not contained in $\Sigma$. Hence $q_1$ is the only intersection point of $\Sigma$ and $C$. We further 
notice that the action of $\mu$ on the complex vector space $(T_{q_1} X, J)$ has two distinct 
eigenspaces, which are $T_{q_1} C$ and $T_{q_1}\Sigma$, so that $C$ and $\Sigma$ must intersect transversely and positively at $q_1$. (Here we used the fact that $C$ is locally linear.)
It follows easily that with respect to the complex structure 
determined by $J$, the rotation numbers at $q_1$ are $(1,a)$, with the second number $a$ being the 
weight in the fiber direction. Finally, note that $q_2$ must be contained in $\Sigma^\prime$. 

It turns out that for the rest of the arguments, it is more convenient to divide the discussions 
according to the following two scenarios :
\begin{itemize}
\item [{(i)}] Neither $\Sigma$ nor $\Sigma^\prime$ is fixed by $G$, i.e., $0<a< n-1$; in particular, 
$n\neq 2$.
\item [{(ii)}] Either $\Sigma$ or $\Sigma^\prime$ is fixed by $G$, i.e., either $a=0$ or $a=n-1$.
\end{itemize}


{\bf Case (i): Neither $\Sigma$ nor $\Sigma^\prime$ is fixed by $G$}. In this case, each of
$\Sigma$, $\Sigma^\prime$ contains another fixed point of $G$, which we continue to denote 
by $q_1^\prime$, $q_2^\prime$ respectively. Since $\Sigma$ has a trivial normal bundle
in $X$, it follows easily that the rotation numbers at $q_1^\prime$ are $(1,-a)$ with respect to the 
complex structure determined by $J$. It remains to determine the rotation numbers at 
$q_2,q_2^\prime$ with respect to the complex structure determined by $J$. 

Let $\pi: X\rightarrow B=\s^2$ be the $\s^2$-fibration, and set $b=\pi(\Sigma)$, 
$b^\prime=\pi(\Sigma^\prime)\in B$. Note that $B$ has a natural orientation determined by
the orientation of $X$ and the orientation of the fibers given by $J$. With respect to this orientation,
the induced $G$-action on $B$ has weight $+1$ at $b$, so that the weight at $b^\prime$ 
must be $-1$. Consequently, with respect to the complex structure determined by $J$, 
the weight of the $G$-action on $X$ must be $-1$ in the normal direction of the $G$-invariant 
fiber $\Sigma^\prime$. With this understood, we claim that the weight of the $G$-action at $q_2$ 
must be $a+1$ in the fiber direction. To see this, note that the rotation numbers at $q_2$ determined 
by $J$ are either $(-1,a+1)$ or $(1,-a-1)$. Our claim is clear in the former case. Assume the latter is 
true. If $n>2$, then we must have $-a-1=-1\pmod{n}$, which gives $a=0\pmod{n}$, and furthermore, 
the weight in the fiber direction must be $1$, which equals $a+1 \pmod{n}$. If $n=2$, then 
$a=0\pmod{2}$ must be true. Moreover, the weight in the fiber direction is $1$, which can be also 
written as $a+1\pmod{2}$. This shows that the rotation numbers at $q_2$ are $(-1,a+1)$ with the 
second entry being the weight in the fiber direction. Finally, it follows easily that the rotation numbers 
at $q_2^\prime$ are $(-1,-a-1)$. 

\vspace{2mm}

The following lemma summarizes the discussion, which corresponds to Lemma 3.3.

\begin{lemma}
For any $G$-invariant $\omega$-compatible almost complex structure 
$J$ on $X$, the rotation numbers determined by the corresponding complex structure on the 
tangent spaces are $(1,a)$ and $(-1,a+1)$ at $q_1,q_2$, and $(1,-a)$ and $(-1,-a-1)$ at
the other two fixed points $q_1^\prime,q_2^\prime$ respectively. 
\end{lemma}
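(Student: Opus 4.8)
The plan is to read off all four fixed-point rotation numbers from the fibration structure already in place, using throughout the convention that a pair $(s,t)$ records the $G$-weight in the base (section) direction first and in the fiber direction second. The base-direction weights are controlled by the induced effective $\Z_n$-action on $B=\s^2$: the $q_1$-analysis fixes its weight at $b=\pi(\Sigma)$ to be $+1$, and since an effective action on $\s^2$ has two fixed points carrying opposite weights, its weight at $b^\prime=\pi(\Sigma^\prime)$ must be $-1$. These supply the first entries, namely $+1$ at the pair $q_1,q_1^\prime$ lying over $b$ and $-1$ at the pair $q_2,q_2^\prime$ lying over $b^\prime$. For the fiber-direction entries it suffices to compute one weight on each invariant fiber, since the induced effective $\Z_n$-action on each fiber $\cong\s^2$ forces the two fiber weights to be negatives of one another.

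First I would treat $\Sigma$. The fiber weight at $q_1$ is already pinned to $a$: as $C$ meets $\Sigma$ transversely and positively at $q_1$ with $T_{q_1}C$ and $T_{q_1}\Sigma$ the two $J$-eigenlines, the $J$-rotation numbers at $q_1$ coincide with the locally linear signed pair $(1,a)$, and the opposite-weight relation on $\Sigma$ then gives fiber weight $-a$ at $q_1^\prime$, hence $(1,a)$ and $(1,-a)$. The crux is the fiber weight on $\Sigma^\prime$, i.e. at $q_2$. Here the orientation of $C$ fixes the locally linear rotation numbers as $(-1,a+1)$, but $J$ need not orient $T_{q_2}C$ compatibly with $C$, so a priori the $J$-rotation numbers are $(-1,a+1)$ or its simultaneous negation $(1,-a-1)$. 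To break this ambiguity I would use that the $J$-weight in the direction normal to $\Sigma^\prime$ --- which at $q_2$ is exactly the base direction $T_{q_2}C$ --- equals the base weight $-1$ at $b^\prime$. For $n>2$ this excludes $(1,-a-1)$, whose base entry is $+1\neq -1$; for $n=2$, where $+1\equiv -1\pmod 2$, the two candidate pairs agree modulo $2$ in both entries, so the conclusion holds regardless. Thus the fiber weight at $q_2$ is $a+1$, giving $(-1,a+1)$, and the opposite-weight relation on $\Sigma^\prime$ yields fiber weight $-a-1$ at $q_2^\prime$, hence $(-1,-a-1)$.

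The main obstacle is precisely this sign determination at $q_2$. At $q_1$ the transverse, positive intersection of $C$ and $\Sigma$ directly fixes how $J$ orients the relevant eigenline, but at $q_2$ no such curve is available, and one must instead import the orientation of the base $\s^2$ to select the correct simultaneous sign, while separately checking the degenerate case $n=2$ where $\pm 1$ become indistinguishable. Everything else reduces to bookkeeping via effectiveness of the $\Z_n$-actions on the two invariant fibers and on the base.
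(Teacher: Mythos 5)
Your proposal is, in substance, the paper's own proof: the pair $(1,a)$ at $q_1$ comes from the transverse positive intersection of $C$ with $\Sigma$; the normal weights of the two invariant fibers are read off the induced effective action on the base ($+1$ over $b$, hence $-1$ over $b'$); opposite tangent weights at the two fixed points of each invariant fiber give $q_1'$ and $q_2'$; and the sign ambiguity at $q_2$ is broken by matching the two candidate pairs against the normal weight $-1$ of $\Sigma'$, with the same $n=2$ caveat the paper records.

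Two asides need repair, though neither changes the outcome. First, the induced $\Z_n$-action on an invariant \emph{fiber} is not effective in general: $\kappa$ fixes $\Sigma$ pointwise, and $\gcd(a,n)>1$ is allowed. The fact you actually use---that the tangent weights at the two fixed points of an orientation-preserving cyclic action on $\s^2$ are negatives of one another---holds without effectiveness, so just drop that word (effectiveness is correct, and needed, only for the action on the base $B$). Second, and more substantively, your identification of the normal direction of $\Sigma'$ at $q_2$ with $T_{q_2}C$ is unjustified: $C$ may be tangent to $\Sigma'$ at $q_2$ (the paper must contend with exactly this possibility in Lemma 4.3), and for $n$ odd nothing like Lemma 4.2 guarantees that $T_{q_2}C$ is a $J$-line. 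So you cannot read the candidate pairs at $q_2$ as \emph{ordered} (base, fiber) pairs, and the exclusion ``$(1,-a-1)$ has base entry $+1\neq -1$'' is not licensed as written. The repair is cheap and is what the paper does: $d\pi$ identifies $T_{q_2}X/T_{q_2}\Sigma'$ with $T_{b'}B$ equivariantly, so $-1$ is one member of the \emph{unordered} pair of $J$-rotation numbers at $q_2$, the other member being the fiber weight. If that pair were $\{1,-a-1\}$, then $-1\in\{1,-a-1\}$ forces $n=2$ or $a=0\pmod{n}$, both excluded here since $0<a<n-1$; hence the pair is $\{-1,a+1\}$ and the fiber weight is $a+1$ (also when $a+1=-1\pmod{n}$, where both entries equal $-1$).
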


With this in hand, one can argue similarly as in the pseudo-free case that for a generic 
$G$-invariant $J$, the self-intersection number of $C_0$ falls into three possibilities:
$$
(a) \; C_0^2=-1, \; (b) \; C_0^2= -n+2a+1, \;   \mbox{ or } (c)\; C_0^2=-2a-1+n
$$ 
according to (a) $q_1,q_2\in C_0$, (b) $q_1,q_2^\prime\in C_0$, or (c) $q_1^\prime,q_2\in C_0$. 
We finish off the proof of Theorem 1.1 in case (i) (i.e., neither $\Sigma$ nor $\Sigma^\prime$ is 
fixed by $G$) by observing that $n\neq 2$ so that $n$ is odd.

For Theorem 1.2 (continuing with the assumption that neither $\Sigma$ nor $\Sigma^\prime$ is 
fixed by $G$), we shall only consider the case where $n>2$ and even, and assume that 
the $G$-invariant $(-1)$-sphere $C$ is smoothly embedded. The smoothness assumption on $C$ 
will play an essential role in our argument. First, we observe

\begin{lemma}
Let $n>2$ be an even integer. Suppose a linear $\Z_n$-action on $\R^4$ preserves a complex 
structure $J$ on $\R^4$. Then every $\Z_n$-invariant $2$-dimensional subspace of $\R^4$ is 
$J$-invariant. 
\end{lemma}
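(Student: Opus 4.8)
The plan is to diagonalize the action and reduce the statement to a condition on a pair of rotation numbers. Fix a generator $g$ of $\Z_n$. Since the action preserves $J$, the map $g$ commutes with $J$ and is therefore complex linear for the identification $(\R^4,J)\cong\C^2$; as $g^n=\mathrm{id}$ it is diagonalizable over $\C$, with eigenvalues $\zeta_1=e^{2\pi i m_1/n}$ and $\zeta_2=e^{2\pi i m_2/n}$ that are $n$-th roots of unity. Under this identification a real $2$-plane is $J$-invariant exactly when it is a complex line, and the only $g$-invariant complex lines are the two eigenlines (when $\zeta_1\neq\zeta_2$) or all complex lines (when $\zeta_1=\zeta_2$). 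So the entire content is that a $\Z_n$-invariant real $2$-plane $V$ must be one of these eigenlines.

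First I would record an elementary dichotomy. For a $\Z_n$-invariant $V$, both $V\cap JV$ and $V+JV$ are $\Z_n$-invariant and $J$-invariant. If $V\cap JV=V$ then $V=JV$ is $J$-invariant and we are done; otherwise $V\cap JV=0$, so $\R^4=V\oplus JV$ with both summands $g$-invariant. In this remaining case $J$ restricts to an isomorphism $V\to JV$ intertwining $g|_V$ with $g|_{JV}$, so the two real $2\times2$ blocks are conjugate and share a characteristic polynomial; consequently each eigenvalue of $g|_V$ occurs with multiplicity two among the complexified eigenvalues $\{\zeta_1,\bar\zeta_1,\zeta_2,\bar\zeta_2\}$ of $g$ acting on $\R^4$.

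Next I would read off the forced eigenvalue pattern, which is where I expect the hypothesis $n>2$ to be spent. A finite-order element of $GL(2,\R)$ is semisimple with eigenvalues either a non-real conjugate pair or real numbers in $\{\pm1\}$. In the real case $g|_V^2=\mathrm{id}$, and by the doubling above $g^2=\mathrm{id}$ on all of $\R^4$, so $g$ has order at most $2$, contradicting faithfulness once $n>2$. Hence $g|_V$ has a non-real conjugate eigenvalue pair $\{\eta,\bar\eta\}$, and matching $\{\eta,\eta,\bar\eta,\bar\eta\}$ against $\{\zeta_1,\bar\zeta_1,\zeta_2,\bar\zeta_2\}$ forces either $\zeta_1=\zeta_2$ or $\zeta_2=\bar\zeta_1$. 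The first alternative makes $g$ the scalar $\eta\,\mathrm{id}$ with $\eta\neq\pm1$, for which every invariant real plane is already a complex line, contradicting that $V$ is not one. Thus the only surviving possibility is $\zeta_2=\bar\zeta_1$ with $\zeta_1\neq\pm1$, i.e. the two rotation numbers satisfy $m_1+m_2\equiv0\pmod n$.

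I expect this conjugate-pair configuration to be the main obstacle: for a bare cyclic group it genuinely produces non-$J$-invariant invariant planes (for instance $g=\mathrm{diag}(\zeta,\zeta^{-1})$), so the remaining structure must be brought to bear precisely here. The plan is to exclude $\zeta_2=\bar\zeta_1$ using the ambient non-pseudo-free data fixed before the lemma: a conjugate pair $\zeta_1,\bar\zeta_1$ has equal order, so no nontrivial power of $g$ can fix a single complex line, whereas the element $\kappa$ with two-dimensional fixed component $\Sigma$ does exactly that at the fixed points lying on $\Sigma$. Equivalently, I would feed in the rotation numbers already determined in the preceding lemma, together with the inequality $0<a<n-1$ of case (i), and check directly that the two weights at each relevant fixed point are never negatives of one another modulo $n$. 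Either route rules out $m_1+m_2\equiv0\pmod n$, leaving only the two eigenlines, which are $J$-invariant, and completes the argument.
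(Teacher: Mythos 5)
Your linear-algebra skeleton is sound, and your diagnosis of the conjugate-pair case is not merely ``the main obstacle'' --- it exposes a genuine error in the paper: the lemma as literally stated is false, and your example is an actual counterexample. For any $n>2$, $g=\mathrm{diag}(\zeta,\zeta^{-1})$ with $\zeta=e^{2\pi i/n}$ generates a faithful $\Z_n$-action commuting with the standard $J$ on $\C^2=\R^4$, yet the plane $V=\{(z,\bar z):z\in\C\}$ (more generally any graph $\{(z,c\bar z)\}$, $c\neq 0$) satisfies $gV=V$ while $V\cap JV=0$. The paper's own proof goes wrong exactly here: its first case asserts that when the two weights are distinct, the two eigen-$2$-planes ``are the only ones which are $\Z_n$-invariant,'' which fails when $\zeta_2=\bar\zeta_1$, because rotations by $\theta$ and $-\theta$ are isomorphic as real representations of $\Z_n$ and graphs of intertwiners are then invariant. (The paper's second case, the scalar action, is correct --- handled via $J=g^m$ when $n=4m$ and $gv+g^{2m}v=2\sin(2\pi/n)Jv$ when $n=4m+2$ --- and it is precisely the one sub-case you asserted without proof, ``every invariant real plane is already a complex line''; that follows at once from $gv=\cos\theta\,v+\sin\theta\,Jv$ with $\sin\theta\neq 0$.) Your route is genuinely different from the paper's: the dichotomy $V\cap JV\in\{0,V\}$, the splitting $\R^4=V\oplus JV$ with $g|_{JV}=Jg|_VJ^{-1}$, and the characteristic-polynomial doubling are all correct, and they prove the correct statement: for an effective action, every invariant $2$-plane is $J$-invariant if --- and, by your graph examples, only if --- the weights satisfy $m_1+m_2\not\equiv 0\pmod{n}$. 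Note that both you and the paper tacitly assume effectiveness (otherwise the generator could act by $-\mathrm{id}$, for which every plane is invariant); in context this is fine, since the action on $C$ is arranged to be effective at the start of Section~4.

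On your proposed repair, one correction and one warning. Your first route --- no nontrivial power of $g$ fixes a plane pointwise in the conjugate case, while $\kappa$ fixes $\Sigma$ --- only certifies the fixed point lying on $\Sigma$, i.e.\ $q_1$; it says nothing at $q_2$, through which $\Sigma$ does not pass, and indeed in case~(ii) with $a=0$ the weights $(-1,1)$ at $q_2$ genuinely form a conjugate pair, so the lemma's conclusion can fail there. What actually saves the paper is your second route: at the points where Section~4 invokes the lemma, the weights are $(1,a)$ at $q_1$ and $(-1,a+1)$ at $q_2$ with $0<a<n-1$ in case~(i), so $1+a\not\equiv 0$ and $a\not\equiv 0\pmod{n}$; in case~(ii) the argument only needs the point with weights $(1,0)$ (resp.\ $(-1,0)$), which is safe. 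A warning, though: the lemma is invoked again in Lemma~5.5 at $x_{00},x_{10}$ with weights $(1,b)$ and $(-1,b+r^\prime)$; the normalization $0\le 2b\le n$ rules out $b\equiv -1$, but $b+r^\prime\equiv 1\pmod{n}$ (e.g.\ $b=1$, $r^\prime=0$) does not appear to be excluded by the constraints $0\le r^\prime<n$, $b+r^\prime\le n$, so the corrected lemma forces an additional check at that corner of Section~5 as well. Finally, your last paragraph is a plan rather than a finished argument, but since the literal statement admits your counterexample, no completion exists; the honest deliverable is the amended lemma with the hypothesis $m_1+m_2\not\equiv 0\pmod{n}$ together with the pointwise weight verifications, which your outline essentially supplies and which the paper's proof does not.
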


\begin{proof}
If the $\Z_n$-action on the complex vector space $(\R^4,J)$ has two distinct eigenspaces (i.e., with
distinct weights), then the corresponding $2$-dimensional real subspaces are the only ones which
are $\Z_n$-invariant. The lemma follows easily in this case.

Suppose the $\Z_n$-action on $(\R^4,J)$ is given by a complex scalar multiplication (i.e., with
the same weight). Then one can choose appropriate coordinates so that a generator $g$ of $\Z_n$ 
and the complex structure $J$ may be represented by matrices 
$$
g=\left (\begin{array}{cccc}
\cos \frac{2\pi}{n} & -\sin\frac{2\pi}{n} & 0 & 0\\
\sin\frac{2\pi}{n} & \cos\frac{2\pi}{n} & 0 & 0\\
0 & 0 &\cos \frac{2\pi}{n} & -\sin\frac{2\pi}{n}\\
0 & 0 & \sin\frac{2\pi}{n} & \cos\frac{2\pi}{n}\\
\end{array}
\right )
\mbox{ and }
J=\left (\begin{array}{cccc}
0 & -1 & 0 & 0\\
1 & 0 & 0 & 0\\
0 & 0 & 0 & -1\\
0 & 0 & 1 & 0\\
\end{array}
\right ).
$$
Let $L$ be any $\Z_n$-invariant $2$-dimensional subspace of $\R^4$. If $n=4m$, then $J=g^m$
so that for any vector $v\in L$, $Jv=g^m \cdot v\in L$. If $n=4m+2$, then for any vector $v\in L$,
$g\cdot v+g^{2m} \cdot v= 2\sin \frac{2\pi}{n}\cdot Jv$. Since $\sin \frac{2\pi}{n}\neq 0$ as $n>2$,
one has $Jv\in L$ as well. This proves that $L$ is $J$-invariant, and the proof of the lemma is
complete.

\end{proof}

As a corollary of Lemma 4.2, we see that in the case of $n>2$ and even, the tangent spaces of 
$C$ at $q_1,q_2$ are $J$-invariant for any $G$-invariant $\omega$-compatible almost complex 
structure $J$ on $X$. Furthermore, since $C$ intersects $\Sigma$ positively, the symplectic
form $\omega$ is positive on $C$ near $q_1$. We claim that the same is true at $q_2$ if the
intersection of $C$ and $\Sigma^\prime$ is transverse. To see this, suppose the intersection of 
$C$ and $\Sigma^\prime$ is transverse and negative at $q_2$. Then note that the $G$-action
on $C\cap \Sigma^\prime\setminus \{q_2\}$ is free so that by a small, equivariant perturbation
of $C$ supported in the complement of $\{q_2\}$, one may assume $C$ and $\Sigma^\prime$ 
intersect transversely. It follows then that
$$
1=e_1\cdot F =C\cdot \Sigma^\prime=-1 \pmod{n},
$$
contradicting the assumption $n>2$. Hence the claim follows.

The above discussion has the following consequence: for any fixed $G$-invariant 
$\omega$-compatible almost complex structure $J_0$ on $X$, we can fix a $G$-invariant, 
$\omega$-compatible almost complex structure $\hat{J}_0$ in a neighborhood of $q_1,q_2$ 
with the following significance:
\begin{itemize}
\item $\hat{J}_0$ agrees with $J_0$ at $q_1,q_2$;
\item $C$ is $\hat{J}_0$-holomorphic near $q_1$, and when $C$ intersects $\Sigma^\prime$ 
transversely, it is also $\hat{J}_0$-holomorphic near $q_2$. 
\end{itemize}

\begin{lemma}
Let $J_0$ be any given $G$-invariant $\omega$-compatible almost complex structure. Then for any 
$G$-invariant $J$ which equals $\hat{J}_0$ in a neighborhood of $q_1,q_2$, the intersection number
of $C$ with the $J$-holomorphic section $C_0$ satisfies the following congruence equation
\begin{itemize}
\item $C\cdot C_0=a \pmod{n}$ if $q_1,q_2^\prime\in C_0$;
\item $C\cdot C_0=-a-1 \pmod{n}$ if $q_1^\prime,q_2\in C_0$.
\end{itemize}
\end{lemma}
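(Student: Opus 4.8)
The plan is to compute $C\cdot C_0$ by localizing the intersection at the fixed points and showing that every other contribution is a multiple of $n$. The crucial geometric observation is that $G=\Z_n$ acts on $C\cong\s^2$ effectively (as already assumed in this case), hence as a rotation group whose only non-free points are the two poles $q_1,q_2$, each carrying the full isotropy $G$. Consequently \emph{every} point of $C\cap C_0$ is either a free point or one of $q_1,q_2$, and no intermediate isotropy subgroups occur. After a small $G$-equivariant perturbation of $C$ supported away from the poles (which alters neither the homology class nor the local picture near $q_1,q_2$), the remaining intersections lie in free $G$-orbits, and since $G$ preserves the orientations of $C$, of the $J$-holomorphic curve $C_0$, and of $X$, the local signs are constant along each orbit. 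Thus each free orbit contributes $\pm n\equiv 0\pmod n$, and
$$
C\cdot C_0\equiv \sum_{p\in\{q_1,q_2\}\cap C_0}(C\cdot C_0)_p\pmod n,
$$
where $(C\cdot C_0)_p$ is the local intersection number at $p$.

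Next I would pin down the unique shared fixed point in each case. Since $C$ meets the two invariant fibers $\Sigma,\Sigma^\prime$ only at $q_1$ and $q_2$ respectively, while the section $C_0$ meets $\Sigma$ at whichever of $q_1,q_1^\prime$ it contains and $\Sigma^\prime$ at whichever of $q_2,q_2^\prime$ it contains, one reads off that in case $q_1,q_2^\prime\in C_0$ the only common fixed point is $q_1$, and in case $q_1^\prime,q_2\in C_0$ it is $q_2$. Hence the congruence above collapses to the single local term $(C\cdot C_0)_{q_1}$, respectively $(C\cdot C_0)_{q_2}$.

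It remains to compute this local number mod $n$. By Lemma 4.2 the tangent plane $T_pC$ is $J$-invariant, hence a weight eigenspace of the $G$-representation on $T_pX$; as $C_0$ is a section, $T_pC_0$ is the eigenspace complementary to the fiber direction, namely the weight $+1$ eigenspace at $q_1$ and the weight $-1$ eigenspace at $q_2$, and this is exactly $T_pC$. So $C$ and $C_0$ are tangent at $p$, and in a $G$-equivariant chart $(w_1,w_2)$ (with $w_1$ the common tangent direction and $w_2$ the fiber direction) both curves are graphs $w_2=\psi(w_1)$ and $w_2=\chi(w_1)$. $G$-invariance forces the admissible monomials in these graph functions to carry a fixed weight: at $q_1$ the weights $(1,a)$ force exponents $\equiv a\pmod n$, and at $q_2$ the weights $(-1,a+1)$ force exponents $\equiv -(a+1)\pmod n$. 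The local intersection number is the winding number (local degree) of $\psi-\chi$ about the origin, which, being assembled from such monomials, is $\equiv a\pmod n$ at $q_1$ and $\equiv -a-1\pmod n$ at $q_2$; since $0<a<n-1$ in case (i), both $a$ and $-a-1$ are nonzero mod $n$, so the zero is isolated and the degree computation is valid.

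The hard part will be this last local computation in the case $q_1^\prime,q_2\in C_0$, where $C$ need not be $J$-holomorphic near $q_2$ (the hypothesis $J=\hat{J}_0$ renders $C$ holomorphic there only \emph{when} $C$ meets $\Sigma^\prime$ transversely). I would stress that the winding-number argument uses only that $C$ is smooth and $G$-invariant with the prescribed invariant tangent plane, so it delivers the congruence uniformly and bypasses the transversality issue entirely; the genuine complex multiplicity at $q_1$ in the other case is merely the holomorphic specialization of the same computation. What remains is the careful bookkeeping of orientation signs together with the degenerate subcase $a=n-2$, where the two weights at $q_2$ coincide and the scalar case of Lemma 4.2 applies—there one checks directly that the weighted-monomial count still yields $-a-1\pmod n$.
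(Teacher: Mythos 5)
Your global skeleton is the same as the paper's: reduce $C\cdot C_0 \pmod n$ to a single local index at the unique shared fixed point (after an equivariant perturbation making the free-orbit intersections transverse, each orbit contributing $0 \pmod n$), identify that shared point as $q_1$ or $q_2$ in the two cases, and read the local index off the weights. That part is sound and matches the paper. The divergence is in the local computation, and this is where you have a genuine gap. Your claim that ``since $a$ and $-a-1$ are nonzero mod $n$, the zero is isolated'' is false for a merely smooth equivariant graph difference: a smooth germ $f$ with $f(\mu z)=\mu^a f(z)$ can have zeros accumulating at the origin (e.g.\ $f(z)=z^{a}\,\epsilon(|z|)$ with $\epsilon$ vanishing on a sequence of circles shrinking to $0$), so the winding number of $\psi-\chi$ on small circles need not be defined, let alone compute a local index. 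This is precisely why the lemma's hypothesis $J=\hat{J}_0$ near $q_1,q_2$ is there: the paper makes \emph{both} curves $J$-holomorphic near the relevant point and invokes Micallef--White to put them in common $C^1$ graph coordinates $z\mapsto (z,f(z))$, $z\mapsto(z,f_0(z))$, where either the intersection is isolated of finite order $\equiv a$ (resp.\ $\equiv -a-1$) $\pmod n$, or the curves coincide locally and one perturbs $C$ equivariantly to a distinct graph. Your announced ``bypass'' of holomorphicity discards exactly the mechanism that makes the local count well-defined; the smooth route can be repaired, but only by adding a further equivariant perturbation of $C$ near the pole (keeping the pole, which is pinned by equivariance) to kill the accumulating intersections --- a step your write-up neither states nor performs.

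Second, the subcase you defer --- $a+1\equiv-1\pmod n$, where the two weights at $q_2$ coincide --- is not settled by ``the weighted-monomial count,'' because in the scalar case $T_{q_2}C$ need not be complementary to the fiber direction at all: $C$ may be tangent to the invariant fiber $\Sigma^\prime$, hence \emph{transverse} to the section $C_0$, and the common-graph setup on which your whole local argument rests breaks down. This is where the paper's proof does its real work: it shows tangency of $C$ with $\Sigma^\prime$ forces $a+1=-1\pmod n$, then uses $n>2$ to match the orientation of $T_{q_2}C$ (weight $-1$ with respect to $C$'s orientation) with the complex orientation of $T_{q_2}\Sigma^\prime$, so that the transverse intersection with the $J$-holomorphic section $C_0$ has local sign $+1$, and finally observes $+1\equiv -a-1 \pmod n$ precisely because $a+1\equiv -1$. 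Your sketch gestures at this orientation bookkeeping but does not supply it, and as described it would not produce the needed sign. (One small point in your favor: at $q_1$ the degenerate weight coincidence cannot occur, since $ka\equiv 0 \pmod n$ with $k\not\equiv 0$ rules out $a\equiv 1$, so your tangency claim there is automatic; the entire difficulty is concentrated at $q_2$, exactly as the paper's proof reflects.)
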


\begin{proof}
We first consider the case where $q_1,q_2^\prime\in C_0$. The local intersection number
of $C$ and $C_0$ at $q_1$, both being $\hat{J}_0$-holomorphic near $q_1$, can be determined 
as follows. By work of Micallef and White (cf. \cite{MW}, Theorems 6.1 and 6.2), for any $C$ or 
$C_0$, there is a $C^1$-coordinate chart $\Psi: V\rightarrow \C^2$ near $q_1$, such that the $J$-holomorphic curve may be parametrized by a holomorphic map of the form $z\mapsto (z, f(z))$ 
(here we also use the fact that $C$ and $C_0$ are embedded near $q_1$). Furthermore, since
$C$ and $C_0$ are tangent at $q_1$, there are $f(z), f_0(z)$ such that $C$, $C_0$ are 
parametrized by $z\mapsto (z, f(z))$, $z\mapsto (z, f_0(z))$ with respect to the same chart. 
There are two possibilities: (1) $C$, $C_0$ are distinct near $q_1$, (2) $C\equiv C_0$ near $q_1$. 
In case (1), the local intersection number of $C,C_0$ equals the order of vanishing 
of $f(z)-f_0(z)$ at $z=0$ (cf. \cite{MW}), which equals $a\pmod{n}$ because $C,C_0$ are
$G$-invariant and the weight of the $G$-action in the normal direction equals $a$. In case (2),
the local intersection number is not well-defined. However, we may remedy 
this by performing a small, equivariant perturbation to $C$ so that it is represented by the graph 
of $z\mapsto g(z)$ near $q_1$ for some $g(z)\neq f(z)$. Then the local intersection number 
becomes well-defined and it equals $a\pmod{n}$.

On the other hand, the $G$-action on $C\cap C_0\setminus \{q_1\}$ is free, so that by a small,
equivariant perturbation of $C$ away from $q_1$, one can arrange $C$ and $C_0$ 
intersect transversely, so that the contribution to $C\cdot C_0$ that are from the intersection
points other than $q_1$ equals $0\pmod{n}$. The lemma follows easily for this case.

When $q_1^\prime,q_2\in C_0$, the lemma follows by the same argument if $C$ and 
$\Sigma^\prime$ intersect transversely. Suppose $C$ and $\Sigma^\prime$ are tangent 
at $q_2$. Then the weights of the $G$-action on $T_{q_2}X$ must be the same in all 
complex directions;
in particular, $a+1=-1 \pmod{n}$. On the other hand, since $n>2$ and the weight of the 
$G$-action on $T_{q_2} C$ and $T_{q_2}\Sigma^\prime$ is the same (which is $-1$), the
orientations on $T_{q_2} C$ and $T_{q_2}\Sigma^\prime$ must coincide. Consequently, 
the local intersection number of $C$ and $C_0$ at $q_2$ must be $+1$ because $C_0$ is
a $J$-holomorphic section. Then it follows that $C\cdot C_0=1 \pmod{n}$ as we argued in
the previous case. But this is the same as $C\cdot C_0=-a-1 \pmod{n}$ because 
$a+1=-1 \pmod{n}$. Hence the lemma follows. 

\end{proof}

The following lemma finishes off the proof of Theorem 1.2 in case (i), i.e., 
neither $\Sigma$ nor $\Sigma^\prime$ is fixed by $G$.

\begin{lemma}
For any fixed $G$-invariant $J_0$, the $J$-holomorphic section $C_0$ is a $(-1)$-sphere for any
generic $G$-invariant $J$ which equals $\hat{J}_0$ in a neighborhood of $q_1,q_2$. 
\end{lemma}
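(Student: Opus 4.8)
The plan is to rule out the two remaining possibilities $(b)$ and $(c)$ for $C_0^2$ by comparing the homological intersection number $C\cdot C_0$, computed from the classes, with the mod-$n$ value of $C\cdot C_0$ supplied by Lemma 4.3. The smoothness hypothesis on $C$ has already been absorbed into Lemma 4.3, so what remains is essentially a divisibility argument, and the proof of the present lemma is short.

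First I would determine the homology class of $C_0$. As $C_0$ is a section of the $\s^2$-fibration, $C_0\cdot F=1$; writing $C_0=\alpha e_0+\beta e_1$ and using $F=e_0-e_1$ gives $\alpha+\beta=1$, and then $C_0^2=\alpha^2-\beta^2=(\alpha+\beta)(\alpha-\beta)=\alpha-\beta$. Hence $\beta=(1-C_0^2)/2$, an integer since $C_0^2$ is odd (cf. Lemma 2.3(2)), and the homological intersection of $C=e_1$ with $C_0$ is
$$
C\cdot C_0=e_1\cdot(\alpha e_0+\beta e_1)=-\beta=\frac{C_0^2-1}{2}.
$$

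Next, fixing $J$ generic and equal to $\hat{J}_0$ near $q_1,q_2$ so that both the list of values of $C_0^2$ and Lemma 4.3 apply, I would treat cases $(b)$ and $(c)$ in turn. In case $(b)$, where $q_1,q_2^\prime\in C_0$ and $C_0^2=-n+2a+1$, the formula above gives $C\cdot C_0=a-n/2$, while Lemma 4.3 gives $C\cdot C_0\equiv a\pmod n$; subtracting yields $n/2\equiv 0\pmod n$. In case $(c)$, where $q_1^\prime,q_2\in C_0$ and $C_0^2=n-2a-1$, the formula gives $C\cdot C_0=-a-1+n/2$, while Lemma 4.3 gives $C\cdot C_0\equiv -a-1\pmod n$, again yielding $n/2\equiv 0\pmod n$.

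Since $n>2$ is even, $n/2$ is an integer with $0<n/2<n$, so $n/2\not\equiv 0\pmod n$; thus both $(b)$ and $(c)$ are impossible, and with case $(d)$ already excluded only case $(a)$, namely $C_0^2=-1$, survives. As $C_0$ is an embedded $J$-holomorphic two-sphere (the remark following Lemma 2.3), it is the desired $G$-invariant $J$-holomorphic $(-1)$-sphere. There is no real obstacle internal to this lemma: the delicate work is the local mod-$n$ intersection count of Lemma 4.3, where smoothness of $C$ enters through the Micallef--White normal form, and here one only needs to match it against the global computation and invoke the parity of $n$.
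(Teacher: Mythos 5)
Your proposal is correct and is essentially the paper's own argument in lightly rearranged form: the paper sets $\delta=C\cdot C_0$, derives $C_0^2=2\delta+1$ (your identity $C\cdot C_0=(C_0^2-1)/2$), and rules out cases $(b)$ and $(c)$ by the same clash between Lemma 4.3 mod $n$ and the generic values of $C_0^2$, phrased there as $C_0^2\equiv \pm(2a+1)\pmod{2n}$ versus $C_0^2=\mp(n-2a-1)$, which is exactly your $n/2\not\equiv 0\pmod n$. The only point you pass over is the paper's one-sentence justification that genericity (hence $d\geq 0$) is still available within the restricted class of $J$ equal to $\hat{J}_0$ near $q_1,q_2$ --- namely, that no $J$-holomorphic curve can lie entirely in that neighborhood --- which is worth stating since the earlier dimension count was proved for unconstrained generic $G$-invariant $J$.
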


\begin{proof}
Let $\delta=C\cdot C_0$. Then the fact that $C=e_1$, $F=e_0-e_1$ and $C_0\cdot F=1$ implies
that $C_0=(\delta +1) e_0-\delta e_1$, which then gives $C_0^2=2\delta+1$. By Lemma 4.3,
$\delta=a \pmod{n}$ if $q_1,q_2^\prime\in C_0$, and $\delta=-a-1\pmod{n}$ 
if $q_1^\prime,q_2\in C_0$. Consequently, $C_0^2=2a+1\pmod{2n}$ in the former case, and
$C_0^2=-2a-1\pmod{2n}$ in the latter case.

Now we recall that the transversality theorem for moduli space of $J$-holomorphic curves continues 
to hold even if we consider a more restrictive class of $G$-invariant almost complex structures $J$,
i.e., those which equal to $\hat{J}_0$ in a fixed neighborhood of $q_1,q_2$, because no 
$J$-holomorphic curves can lie completely in such a neighborhood. Consequently, for a generic 
such $J$, we continue to have $C_0^2= -n+2a+1$ if $q_1,q_2^\prime\in C_0$ and 
$C_0^2=-2a-1+n$
if $q_1^\prime,q_2\in C_0$. But this contradicts what was obtained in the previous paragraph, 
hence $C_0^2=-1$ must be true. This finishes off the proof. 

\end{proof}

{\bf Case (ii): Either $\Sigma$ or $\Sigma^\prime$ is fixed by $G$}. Without loss of generality, we shall
only consider the case where $\Sigma$ is fixed, which corresponds to $a=0$; the other case is 
completely analogous.

We note that in this case, the fixed point $q_1^\prime$ is not well-defined. However, $q_2^\prime$
is well-defined and the rotation numbers at $q_2$, $q_2^\prime$ continue to be $(-1, a+1)=(-1,1)$,
$(-1,-a-1)=(-1,-1)$ respectively (with respect to almost complex structure $J$).

Let $x,y$ be the two fixed points on $C_0$, where $x\in \Sigma$, and $y=q_2$ or $q_2^\prime$. 
Then it follows easily that the weights $(m_{1,1},m_{1,2})$ at $x$ equals $(1,0)$ and the weights
$(m_{2,1},m_{2,2})$ at $y$ equals $(1, n-1)$ or $(1, 1)$, depending on whether $y=q_2$ or 
$q_2^\prime$. Correspondingly, when $J$ is a generic $G$-invariant almost complex structure,
we have 
$$
C_0^2=-1 \mbox{ if } y=q_2, \mbox{ and } C_0^2=-n+1 \mbox{ if } y=q_2^\prime. 
$$
Theorem 1.1 follows immediately, observing that if $n=2$ or $n$ is odd, $C_0^2$ must be $-1$. 

For Theorem 1.2 where $n>2$ and even, we can eliminate the case where $y=q_2^\prime$ as 
follows. If $x=q_1$, then Lemma 4.3 still applies and we get $C\cdot C_0=a=0 \pmod{n}$. If $x\neq
q_1$, then $C\cdot C_0=0 \pmod{n}$ holds automatically. Then, in any event, we have, as in 
Lemma 4.4, $C_0^2=1\pmod{2n}$ which contradicts $C_0^2=-n+1$. This finishes off the proof
for Theorem 1.2. 

\section{Smooth classification of $G$-Hirzebruch surfaces}

We fix a generator $\mu\in G=\Z_n$, and let $F_r(a,b)$ be the Hirzebruch surface $F_r$ equipped with
a homologically trivial, holomorphic $G$-action with the following fixed-point set structure. Note that such
a $G$-action has two invariant fibers, denoted by $F_0, F_1$, and leaves the zero-section $E_0$ and
the infinity-section $E_1$ invariant also. (Here our convention is from \cite{Wil2} that $E_0\cdot E_0=-r$.)
We set $x_{ij}=F_i\cap E_j$ for $i,j=0,1$, which are fixed points of the $G$-action. With this understood,
the integers $(a,b) \pmod{n}$ are the rotation numbers at $x_{00}$ with respect to the complex 
structure on $F_r$, with the second number in the pair being the weight of the action in the fiber 
direction. The rotation numbers at the other fixed points $x_{01}, x_{10},x_{11}$ are $(a,-b)$, $(-a,b+ra)$, and $(-a,-b-ra)$. See Wilczynski \cite{Wil2}, Theorem 4.1. We note that the integers $a,b,n$ must satisfy 
$\gcd(a,b,n)=1$, and furthermore, 
\begin{itemize}
\item $\gcd(a,n)=1$ if and only if $E_0,E_1$ have trivial isotropy;
\item $\gcd(b,n)=1$ if and only if $F_0$ has trivial isotropy; and
\item $\gcd(b+ra,n)=1$ if and only if $F_1$ has trivial isotropy.
\end{itemize}

Let $F_{r^\prime}(a^\prime,b^\prime)$ be another $G$-Hirzebruch surface with the 
corresponding invariant fibers and sections and the fixed points denoted by $F_i^\prime$,
$E_i^\prime$, and $x_{ij}^\prime$ respectively. Under appropriate numerical conditions 
on $(a,b,r)$ and $(a^\prime,b^\prime,r^\prime)$, there are six types, $c_1, c_2, \cdots, c_6$, 
of canonically defined, orientation-preserving, equivariant diffeomorphisms between $F_r(a,b)$ and 
$F_{r^\prime}(a^\prime,b^\prime)$, which we describe below. (See also related discussions in
Wilczynski \cite{Wil2}.)

{\it Type $c_1$}. Suppose $a^\prime=-a$, $b^\prime=-b$, and $r^\prime=r$. Then there is an
equivariant diffeomorphism $c_1:F_r(a,b)\rightarrow F_{r^\prime}(a^\prime,b^\prime)$, which
sends $F_i$ to $F^\prime_i$, $E_i$ to $E_i^\prime$, and which induces $z\mapsto \bar{z}$ 
between the bases and the fibers of $F_r$ and $F_{r^\prime}$.

{\it Type $c_2$}. Suppose $a^\prime=-a$, $b^\prime=b+ra$, and $r^\prime=r$. Then there is an
equivariant diffeomorphism $c_2:F_r(a,b)\rightarrow F_{r^\prime}(a^\prime,b^\prime)$, which
sends $F_0$ to $F^\prime_1$ and $F_1$ to $F_0^\prime$, $E_i$ to $E_i^\prime$, and which induces $z\mapsto {z}^{-1}$ between the bases of $F_r$ and $F_{r^\prime}$.

{\it Type $c_3$}. Suppose $a^\prime=a$, $b^\prime=-b$, and $r^\prime=-r$. Then there is an
equivariant diffeomorphism $c_3:F_r(a,b)\rightarrow F_{r^\prime}(a^\prime,b^\prime)$, which
sends $F_i$ to $F^\prime_i$, $E_0$ to $E_1^\prime$, $E_1$ to $E_2^\prime$, and which 
induces $z\mapsto {z}^{-1}$ between the fibers of $F_r$ and $F_{r^\prime}$.

{\it Type $c_4$}. Suppose $r^\prime=r=0$, $a^\prime=b$, and $b^\prime=a$. Then there is an
equivariant diffeomorphism $c_4:F_r(a,b)\rightarrow F_{r^\prime}(a^\prime,b^\prime)$, which
switches the fibers and sections between $F_r$ and $F_{r^\prime}$, sending $F_0$ to 
$E_0^\prime$, $F_1$ to $E_1^\prime$, and  $E_0$ to $F_0^\prime$, $E_1$ to $F_1^\prime$.

\vspace{1.5mm}

For the types $c_5,c_6$, we assume that $\gcd(a,n)=\gcd(a^\prime,n)=1$.

\vspace{1.5mm}

{\it Type $c_5$}. Suppose $a^\prime=a$, $b^\prime=b$, and $r^\prime =r\pmod{2n}$. Then there is an
equivariant diffeomorphism $c_5:F_r(a,b)\rightarrow F_{r^\prime}(a^\prime,b^\prime)$, sending the fixed
points $x_{ij}$ to $x_{ij}^\prime$, $i,j=0,1$. To see this, we shall explain that there is a diffeomorphism 
between the quotient orbifolds, $\hat{c}_5: F_r(a,b)/G\rightarrow F_{r^\prime}(a^\prime,b^\prime)/G$, 
which are orbifold $\s^2$-bundles over an orbifold $\s^2$ with two singular points $z_0,z_1$ of order $n$.
Moreover, the orbifold $\s^2$-bundles are induced, under the canonical embedding $\s^1\subset SO(3)$, from the principal orbifold $\s^1$-bundles of Euler number $-r/n$, $-r^\prime/n$ respectively. Since
$\gcd(a,n)=\gcd(a^\prime,n)=1$, we may assume without loss of generality that $a=a^\prime=1$. Then
the Seifert invariants of the two bundles, which are the same, are $(n,\beta_0)$, $(n,\beta_1)$ at 
$z_0,z_1$ where $\beta_0=b\pmod{n}$, $\beta_1=-b-r\pmod{n}$. With this understand, let 
$e,e^\prime\in\Z$ such that
$$
-\frac{r}{n}=\frac{\beta_0}{n}+\frac{\beta_1}{n}+e, \;\;
-\frac{r^\prime}{n}=\frac{\beta_0}{n}+\frac{\beta_1}{n}+e^\prime, 
$$
then $r^\prime-r=(e-e^\prime)\cdot n$, which implies $e^\prime=e\pmod{2}$ because 
$r^\prime =r\pmod{2n}$. Since $\pi_1 SO(3)=\Z_2$, $e^\prime=e\pmod{2}$ implies that the
two orbifold $\s^2$-bundles are isomorphic, which gives the diffeomorphism 
$\hat{c}_5: F_r(a,b)/G\rightarrow F_{r^\prime}(a^\prime,b^\prime)/G$. 

{\it Type $c_6$}. Suppose $a^\prime=a$, $b^\prime=b$, and $r^\prime a^\prime  =-2b-ra\pmod{2n}$.
Then there is an equivariant diffeomorphism $c_6:F_r(a,b)\rightarrow F_{r^\prime}(a^\prime,b^\prime)$, sending the fixed points $x_{0j}$ to $x_{0j}^\prime$, $j=0,1$, and $x_{10}$ to $x_{11}^\prime$,
$x_{11}$ to $x_{10}^\prime$. To see this, note that switching $x_{10}$ and $x_{11}$ means applying 
$z\mapsto z^{-1}$ to a neighborhood of $F_1$, which has the effect of changing the sign of $\beta_1$
in the Seifert invariant. Therefore, there is a diffeomorphism from $F_r(a,b)$ to $F_{\tilde{r}}(a,b)$
which switches $x_{10}$ and $x_{11}$, where $\tilde{r}$ satisfies 
$$
-\frac{\tilde{r}}{n}=\frac{\beta_0}{n}+\frac{-\beta_1}{n}+e.
$$
It follows that $\tilde{r}-r=2\beta_1=-2(b+r) \pmod{2n}$, which gives $\tilde{r}=-2b-r \pmod{2n}$.
Note that $r^\prime=\tilde{r}\pmod{2n}$, so that there is a $c_5:F_{\tilde{r}}(a,b)\rightarrow 
F_{r^\prime}(a^\prime,b^\prime)$. Consequently, there is an equivariant diffeomorphism 
$c_6:F_r(a,b)\rightarrow F_{r^\prime}(a^\prime,b^\prime)$ as claimed. (Compare also the
relevant discussions in Wilczynski \cite{Wil2}.)

With the preceding preparations, we shall derive in the next two lemmas a set of numerical conditions
which must be satisfied by the triples $(a,b,r)$ and $(a^\prime,b^\prime,r^\prime)$ (modulo the relations
from the canonical equivariant diffeomorphisms $c_1$ through $c_6$) if there is an orientation-preserving equivariant diffeomorphism between $F_r(a,b)$ and $F_{r^\prime}(a^\prime,b^\prime)$.

\begin{lemma}
Suppose $\gcd(a,n)=\gcd(a^\prime,n)=1$. If $F_r(a,b)$ is orientation-preservingly equivariantly 
diffeomorphic to $F_{r^\prime}(a^\prime,b^\prime)$, then after replacing $F_r(a,b)$, 
$F_{r^\prime}(a^\prime,b^\prime)$ by a $G$-Hirzebruch surface (continuously denoted by
$F_r(a,b)$, $F_{r^\prime}(a^\prime,b^\prime)$ for simplicity) which is equivariantly diffeomorphic 
to $F_r(a,b)$, $F_{r^\prime}(a^\prime,b^\prime)$ by a sequence of canonical equivariant 
diffeomorphisms of types $c_i$, $1\leq i\leq 6$, one of the following must be true: (i) 
$F_r(a,b)=F_{r^\prime}(a^\prime,b^\prime)$, or (ii) $a^\prime=a$, $b^\prime=b$, and $r^\prime=r-n$, 
or (iii) $a^\prime=b$, $b^\prime=a$ and $r^\prime=r=n$, where $a\neq \pm b$.
\end{lemma}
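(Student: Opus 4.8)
The plan is to extract from the existence of an orientation-preserving equivariant diffeomorphism $\phi\colon F_r(a,b)\to F_{r^\prime}(a^\prime,b^\prime)$ only the information carried by the fixed-point data, and then to run a purely arithmetic reduction using the canonical diffeomorphisms $c_1,\dots,c_6$. First I would record that $\phi$ restricts to a bijection of the four isolated fixed points which preserves the local representations -- the rotation numbers, as unordered pairs up to a simultaneous change of sign -- and which preserves the isotropy-stratified incidence structure: the ``square'' whose edges are the invariant sections $E_0,E_1$ and the invariant fibers $F_0,F_1$, each labelled by its isotropy, with $x_{ij}=F_i\cap E_j$ at the corners. Since $\gcd(a,n)=\gcd(a^\prime,n)=1$, both sections carry a free $G$-action away from the fixed points, which distinguishes them from the fibers unless the fibers too are isotropy-free; this is precisely the ambiguity that $c_4$ (available only when $r=0$) is designed to absorb.

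Next I would normalize the matching. The diffeomorphisms $c_1$ (the simultaneous sign change $(a,b)\mapsto(-a,-b)$), $c_2$ (interchanging the two invariant fibers), $c_3$ (fiber inversion, flipping the sign of both $b$ and $r$), and, when $r=0$, $c_4$ (interchanging sections with fibers) realize every combinatorial symmetry of this labelled square. Applying a suitable composition, I may assume $\phi$ sends $x_{ij}$ to $x_{ij}^\prime$, $E_i$ to $E_i^\prime$ and $F_i$ to $F_i^\prime$. Comparing the section-direction entry of the rotation numbers at $x_{00}$ and using $\gcd(a,n)=1$ forces $a^\prime\equiv a\pmod n$ after a possible $c_1$, and comparing the fiber-direction weights then forces $b^\prime\equiv b\pmod n$, absorbing a sign by $c_3$ if needed. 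The one situation in which the section/fiber distinction cannot pin down the roles of $a$ and $b$ is when both are coprime to $n$, so that the entries may be interchanged; this potential swap is exactly what $c_4$ and the exceptional case (iii) account for.

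The heart of the argument is the arithmetic comparison of $r$ and $r^\prime$. Reading the rotation numbers at $x_{10}$, namely $(-a,b+ra)$ against $(-a,b+r^\prime a)$, and invoking $\gcd(a,n)=1$, I obtain $r\equiv r^\prime\pmod n$, so $r^\prime=r+kn$ for some integer $k$. Now $c_5$ gives an equivariant diffeomorphism whenever $r^\prime\equiv r\pmod{2n}$, so modulo $c_5$ the integer $k$ matters only through its parity: $k$ even yields case (i), $F_r(a,b)=F_{r^\prime}(a^\prime,b^\prime)$, while $k$ odd yields case (ii), $r^\prime=r-n$. I would also feed in $c_6$, whose relation $r^\prime a\equiv -2b-ra\pmod{2n}$ is used to confirm that this parity obstruction cannot be cleared, so that (ii) survives as a genuine candidate. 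The exceptional case (iii) then emerges precisely at $r=n$: there $na\equiv 0\pmod n$ collapses the four rotation-number pairs to the symmetric configuration $\{\pm a,\pm b\}$, which is invariant under interchanging $a$ and $b$; when in addition $\gcd(b,n)=1$ and $a\neq\pm b$, so that this interchange is neither trivial nor realized by $c_1$, the surfaces $F_n(a,b)$ and $F_n(b,a)$ share identical fixed-point data, giving case (iii).

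I expect the main obstacle to lie in the two normalization steps: one must verify that the labelled incidence square together with the rotation-number constraints admits only the matchings generated by $c_1,\dots,c_6$, handling with care the degenerate situations $n=2$, $2a\equiv 0\pmod n$, and the section/fiber interchange at $r=0$. Equally delicate is the mod-$2n$ analysis, where the gap between what the fixed-point data forces (a congruence mod $n$) and what $c_5,c_6$ control (the range mod $2n$) is exactly what produces the separate candidate (ii). At the level of this lemma only the \emph{necessity} of the three normal forms is established; the proof that (ii) does not in fact collapse to (i) -- i.e.\ that $F_r(a,b)$ and $F_{r-n}(a,b)$ are genuinely non-diffeomorphic -- is the content deferred to the equivariant Gromov--Taubes computation of Proposition 5.3.
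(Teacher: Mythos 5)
Your overall architecture (normalize with $c_1,\dots,c_6$, compare rotation numbers at the fixed points, reduce $r$ modulo $2n$ via $c_5$, and isolate the swap $a\leftrightarrow b$ as the residual case) is the same as the paper's, but the central normalization step rests on a claim that is false in general. An orientation-preserving equivariant diffeomorphism preserves only the fixed points together with their rotation numbers (as unordered pairs up to a simultaneous sign change) and the strata with nontrivial isotropy; when $\gcd(a,n)=\gcd(b,n)=\gcd(b+ra,n)=1$, the sections $E_0,E_1$ and fibers $F_0,F_1$ carry no isotropy away from the fixed points, so nothing forces $\phi$ to send sections to sections or fibers to fibers, and your ``labelled incidence square'' is not diffeomorphism-invariant data. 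Consequently ``the section-direction entry of the rotation numbers at $x_{00}$'' is not well-defined on the target side, and the chain $a'\equiv a$, then $b'\equiv b$, then $r\equiv r'\pmod{n}$ presupposes exactly the matching it is meant to establish. The paper pins down $a'=\pm a$ by a purely arithmetic, intrinsic observation: when $r\neq 0\pmod{n}$ and $2b+ra\neq 0\pmod{n}$, the number $a$ is characterized as the unique one of $a,b$ that appears, up to sign, in all four rotation pairs $(a,\pm b)$, $(-a,\pm(b+ra))$; the complementary case $r\equiv 0$ or $2b+ra\equiv 0\pmod{n}$ is first reduced by $c_6$ to $r\equiv r'\equiv 0\pmod{n}$, and only there does the genuine swap ambiguity arise.

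Two further steps are glossed. First, even after arranging $x_{00}\mapsto x_{00}'$, the diffeomorphism may send $x_{10}$ to $x_{11}'$ rather than $x_{10}'$, which yields $b+ra\equiv-(b'+r'a')\pmod{n}$ instead of your congruence; this is precisely what the paper's application of $c_6$ normalizes away ($c_6$ switches $x_{10}$ and $x_{11}$, replacing $r$ by $-2b-r$ modulo $2n$ when $a=1$), whereas you assign $c_6$ the different and, for this lemma, irrelevant role of ``confirming that the parity obstruction cannot be cleared'' --- the lemma asserts only the necessity of the three normal forms, and no non-diffeomorphy is verified at this stage. Second, your case (iii) is asserted rather than derived: one must show that the swap matching forces the degenerate congruences, that the $c_6$-then-$c_5$ reduction puts $r'=r\in\{0,n\}$, that $c_4$ together with $c_5$ absorbs $r=0$, and that $c_1$ trivializes the swap when $a=\pm b$, leaving exactly $r'=r=n$ with $a\neq\pm b$. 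Filling these gaps essentially reproduces the paper's case analysis, so the skeleton is right, but as written the combinatorial heart of the proof is missing.
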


\begin{proof}
First, consider the case where $r\neq 0\pmod{n}$ and $2b+ra\neq 0\pmod{n}$. The key observation is
that in this case, $a$ is the unique number among $a,b$ such that either $a$ or $-a$ shows up in all 
four pairs of the rotation numbers, i.e., $(a,\pm b)$ and $(-a,\pm (b+ra))$. It follows easily from the
assumption that $F_r(a,b)$ is orientation-preservingly equivariantly diffeomorphic to 
$F_{r^\prime}(a^\prime,b^\prime)$ that $r^\prime\neq 0\pmod{n}$ and 
$2b^\prime+r^\prime a^\prime\neq 0\pmod{n}$ must also hold, and that $a^\prime=\pm a$.
Furthermore, observe that either $b\neq 0\pmod{n}$ or $b+ra\neq 0\pmod{n}$, which means that at most
one of $F_0,F_1$ is fixed under the $G$-action. We assume without loss of generality that $b\neq 0
\pmod{n}$. Then $x_{00}$ and $x_{01}$, being isolated fixed points, must be sent to the fixed-points 
$x_{ij}^\prime$ for some $i,j$ under the equivariant
diffeomorphism. After replacing $F_{r^\prime}(a^\prime,b^\prime)$ by a $G$-Hirzebruch surface 
(continue to be denoted by $F_{r^\prime}(a^\prime,b^\prime)$ for simplicity) which is equivariantly diffeomorphic to $F_{r^\prime}(a^\prime,b^\prime)$ by a sequence of canonical equivariant 
diffeomorphisms of types $c_i$, $i=2,3$, one can arrange so that $x_{00}$ is sent to $x_{00}^\prime$ 
under the equivariant diffeomorphism from $F_r(a,b)$ to $F_{r^\prime}(a^\prime,b^\prime)$. Then the assumption $r\neq 0\pmod{n}$ and $2b+ra\neq 0\pmod{n}$ implies that $x_{01}$ must be sent to 
$x_{01}^\prime$. With an application of $c_1$ if necessary, we may arrange to have $a^\prime=a$, 
$b^\prime=b$. Finally, if $b+ra=0\pmod{n}$, then $b^\prime +r^\prime a^\prime=0\pmod{n}$ must also
be true, and if $b+ra\neq 0\pmod{n}$, with an application of $c_6$ if necessary, we may arrange to 
have $b+ra=b^\prime +r^\prime a^\prime \pmod{n}$. In any event, $r^\prime=r\pmod{n}$ is satisfied. 
With a further application of $c_5$, we have either $F_r(a,b)=F_{r^\prime}(a^\prime,b^\prime)$,
or $r^\prime=r-n$. Note that when $n\neq 2$ and $x_{1j}$ are isolated, $x_{1j}$ is sent to
$x_{1j}^\prime$ under the equivariant diffeomorphism from $F_r(a,b)$ to 
$F_{r^\prime}(a^\prime,b^\prime)$.

Suppose $r= 0\pmod{n}$ or $2b+ra=0\pmod{n}$. Then $r^\prime= 0\pmod{n}$ or 
$2b^\prime+r^\prime a^\prime=0\pmod{n}$ must also hold. With an application of $c_6$ to both
$F_r(a,b)$ and $F_{r^\prime}(a^\prime,b^\prime)$ if necessary, one may assume 
$r^\prime=r=0\pmod{n}$. If $b=0\pmod{n}$, then we must also have $b^\prime=0\pmod{n}$, and with
a further application of $c_1$, we may arrange to have $a=a^\prime$, $b=b^\prime$. 
If $b\neq 0\pmod{n}$, then $\{x_{ij}\}$ are the only fixed points, and will be sent to $\{x_{ij}^\prime\}$ 
under the equivariant diffeomorphism. With an application of $c_2,c_3$ to 
$F_{r^\prime}(a^\prime,b^\prime)$ if necessary, one can arrange to have $x_{00}$ sent to 
$x_{00}^\prime$ and have $(a,b)=\pm (a^\prime,b^\prime)$ as unordered pairs. Assume first that 
$a\neq \pm b$. Then with an application of $c_1$ if necessary, we have either $a^\prime=b$, 
$b^\prime=a$, in which case $x_{01}$ is sent to $x_{10}^\prime$, or $a^\prime=a$, $b^\prime=b$, 
in which case $x_{01}$ is sent to $x_{01}^\prime$. In the former case, 
if $r=0\pmod{2n}$ or $r^\prime=0\pmod{2n}$, one may apply $c_4$ and $c_5$ to arrange to have 
$a^\prime=a$, $b^\prime=b$. In the latter case, note that $x_{1j}$ is sent to $x_{1j}^\prime$ 
when $n\neq 2$. Assume $a=\pm b$. Then with an application of $c_1$ if necessary,
we may arrange to have $a^\prime=a$, $b^\prime=b$. 
It follows easily that we either have $F_r(a,b)=F_{r^\prime}(a^\prime,b^\prime)$, or $a^\prime=a$, 
$b^\prime=b$ with $r^\prime=r-n$, or $a^\prime=b$, $b^\prime=a$ and $r^\prime=r=n$ with
$a\neq \pm b$. 

\end{proof}

We remark that from the proof it is clear that when $F_r(a,b)\neq 
F_{r^\prime}(a^\prime,b^\prime)$, there is an orientation-preserving equivariant diffeomorphism 
from $F_r(a,b)$ to $F_{r^\prime}(a^\prime,b^\prime)$, which, in the case of $a^\prime=a$, 
$b^\prime=b$ and $r^\prime=r-n$, sends $F_i$ to $F_i^\prime$ if the fibers are fixed, and 
sends $x_{ij}$ to $x_{ij}^\prime$ when $n\neq 2$ if the fixed points are isolated. 
Moreover, in the case of $a^\prime=b$, $b^\prime=a$ and $r^\prime=r=n$, where $a\neq \pm b$, 
the equivariant diffeomorphism sends $x_{ij}$ to $x_{ij}^\prime$ for $i=j$ and sends 
$x_{ij}$ to $x_{ji}^\prime$ for $i\neq j$.

\begin{lemma}
Suppose $\gcd(a,n)\neq 1$. If $F_r(a,b)$ is orientation-preservingly equivariantly diffeomorphic to
$F_{r^\prime}(a^\prime,b^\prime)$, then after replacing $F_{r^\prime}(a^\prime,b^\prime)$ by
a $G$-Hirzebruch surface (continuously denoted by $F_{r^\prime}(a^\prime,b^\prime)$ for simplicity)
which is equivariantly diffeomorphic to $F_{r^\prime}(a^\prime,b^\prime)$
by a sequence of canonical equivariant diffeomorphisms of types $c_i$, $1\leq i\leq 6$, one has
either $F_r(a,b)=F_{r^\prime}(a^\prime,b^\prime)$, or $r=0$, $\gcd(a^\prime,n)=1$ and 
$r^\prime=0\pmod{n}$. 
\end{lemma}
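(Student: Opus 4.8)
The plan is to exploit the rigidity forced by the hypothesis $\gcd(a,n)\neq 1$. Set $d=\gcd(a,n)>1$ and let $H\subset G$ be the subgroup of order $d$. Since $\gcd(a,b,n)=1$ and $d\mid a$, we have $d\nmid b$, so $H$ acts trivially on the base but nontrivially on the fibers; hence its fixed-point set is exactly $E_0\sqcup E_1$, two disjoint spheres of self-intersection $-r$ and $+r$. The first thing I would extract is the list of equivariant-diffeomorphism invariants carried by this fixed locus: the self-intersection numbers $\{-r,+r\}$ (preserved exactly, since the diffeomorphism is orientation-preserving and homologically trivial), the order $d$ of the pointwise stabilizer (preserved because the diffeomorphism is equivariant for the fixed generator $\mu$), the rotation weight $\pm a$ of the induced $G$-action on each fixed sphere, and the unordered pair of normal weights $\{b,\,b+ra\}\pmod{n}$ at the two poles of $E_0$. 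In particular, $F_r(a,b)$ carries a two-dimensional fixed component of a nontrivial subgroup with \emph{nonzero} self-intersection if and only if $r\neq 0$, and this is the dichotomy that organizes the proof.

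In the case $r\neq 0$ the fixed spheres $E_0,E_1$ have nonzero self-intersection, and the only invariant spheres of nonzero self-intersection in $F_{r'}(a',b')$ are $E_0',E_1'$ (every invariant fiber being of self-intersection $0$). Hence the equivariant diffeomorphism carries $\{E_0,E_1\}$ onto $\{E_0',E_1'\}$. Matching pointwise stabilizers gives $\gcd(a',n)=d$, matching self-intersections gives $r'=\pm r$, and after an application of $c_3$ I may assume $r'=r$ as integers. This is precisely where the present argument departs from the preceding lemma: the self-intersection of a fixed section pins $r$ down on the nose, so there is no residual congruence ambiguity and neither $c_5$ nor $c_6$ (both of which require coprimality, and are therefore unavailable here) is needed. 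It then remains to match the equivariant data of the corresponding fixed sphere: the rotation weight yields $a'\equiv\pm a$ and the normal weights yield $\{b',\,b'+r'a'\}\equiv\{b,\,b+ra\}\pmod{n}$; applying $c_1$ to fix the global sign and $c_2$ to order the two poles, one arrives at $a'\equiv a$, $b'\equiv b\pmod{n}$, whence $F_r(a,b)=F_{r'}(a',b')$.

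In the case $r=0$ the fixed spheres $E_0,E_1$ have self-intersection $0$, so $F_0(a,b)$ carries no invariant sphere of nonzero self-intersection; consequently neither does $F_{r'}(a',b')$, which forces $r'=0$ or $\gcd(a',n)=1$. If $\gcd(a',n)\neq 1$ then $E_0',E_1'$ are fixed with self-intersection $\mp r'$, so $r'=0$; both surfaces are $\s^2\times\s^2$, and after using $c_4$ if necessary to make the fixed sections correspond, the matching of the $r\neq 0$ case (now with $r=r'=0$) gives $F_r(a,b)=F_{r'}(a',b')$. If instead $\gcd(a',n)=1$, then the sections $E_0',E_1'$ carry trivial isotropy, so the image of $\{E_0,E_1\}$ must be the pair of invariant fibers $\{F_0',F_1'\}$; since $r=0$ gives $b+ra=b$, the fibers $F_0,F_1$ of the source are fixed by a subgroup of order $\gcd(b,n)$, and a count of fixed spheres then forces $\gcd(b,n)=1$ (otherwise the source would carry four fixed spheres while the target carries only two). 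As $r=0$ and $\gcd(b,n)=1$, I may apply $c_4$ to the source, replacing $F_0(a,b)$ by $F_0(b,a)$, whose first weight is now coprime to $n$. Both $F_0(b,a)$ and $F_{r'}(a',b')$ now satisfy the coprimality hypothesis of the preceding lemma, so that lemma applies; since the source has $r=0$, its conclusions give either $F_0(b,a)=F_{r'}(a',b')$ or $r'=-n$, and in either case $r'\equiv 0\pmod{n}$. This is the exceptional alternative.

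I expect the main obstacle to be the case $r=0$: here the self-intersections of the nontrivially-stabilized sections degenerate to $0$, so they may be carried to invariant fibers rather than to sections, and the rigidity that drives the case $r\neq 0$ is lost. The work is to show that this degeneration can occur only when the source fibers carry trivial isotropy, which is exactly what makes $c_4$ available, and then to extract the congruence $r'\equiv 0\pmod{n}$ by reducing to the preceding lemma. The case $r\neq 0$, by contrast, is essentially rigid, the exact self-intersection of the fixed sections removing the modular ambiguity that was present when $\gcd(a,n)=1$.
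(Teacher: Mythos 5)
Your treatment of the case $r\neq 0$, and of the subcase $r=0$, $\gcd(a',n)\neq 1$, is correct and essentially the paper's own argument, just organized explicitly around the fixed locus $E_0\sqcup E_1$ of the subgroup $H$ of order $d=\gcd(a,n)$: fixed spheres of nonzero self-intersection cannot go to fibers, which pins down $r'=\pm r$ and, after $c_3$, $c_1$, $c_2$, forces $(a',b',r')=(a,b,r)$. Your counting argument in the remaining subcase is also correct: if $\gcd(b,n)>1$ the source carries four $2$-dimensional fixed components ($E_0,E_1$ fixed by the order-$d$ subgroup and $F_0,F_1$ fixed by the order-$\gcd(b,n)$ subgroup, distinct since $\gcd(a,b,n)=1$) while the target, whose sections have trivial isotropy, carries at most two; so $\gcd(b,n)=1$ and $c_4$ is available on the source.

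The gap is in your final step, where you apply Lemma 5.1 to $F_0(b,a)$ and $F_{r'}(a',b')$ and read its conclusion as ``either $F_0(b,a)=F_{r'}(a',b')$ or $r'=r-n=-n$, and in either case $r'\equiv 0\pmod{n}$.'' Lemma 5.1's alternatives hold only after replacing \emph{both} surfaces by canonically equivalent ones, and those replacements do not preserve the source's parameter $r=0$, not even modulo $n$: $c_6$ --- which becomes available on the modified source precisely because you arranged its first weight $b$ to be coprime to $n$ --- replaces $r=0$ by $\tilde r\equiv -2ab^{-1}\pmod{2n}$, which is generally nonzero mod $n$ (e.g.\ $n=6$, $a=2$, $b=1$ gives $\tilde r\equiv 2\pmod 6$). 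So from the statement of Lemma 5.1 you can neither dismiss alternative (iii) ($r'=r=n$ refers to the modified $r$, not your original $0$) nor extract $r'\equiv 0\pmod{n}$ from (i) or (ii); moreover its proof may apply $c_4$ to the target, which here (where $\gcd(b',n)=d>1$) would destroy the coprimality $\gcd(a',n)=1$ that the conclusion of the present lemma asserts. These defects are repairable by re-running the relevant branch of Lemma 5.1's proof with bookkeeping (one checks that $c_6$ is invoked on a surface only to move it into the $r\equiv 0\pmod{n}$ branch, so a source already there stays there, and that the swapped matching is excluded by comparing $\gcd$'s of the weights), but your write-up does not do this, and note also that the lemma as stated modifies only the target while you modify the source. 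The paper avoids the detour entirely with a two-line direct argument: since $r=0$ the two invariant fibers of the source have equal rotation weight, and the unordered pair of fiber weights up to sign is an equivariant invariant, so $b'+r'a'\equiv\pm b'\pmod{n}$; the plus sign gives $r'a'\equiv 0$, hence $r'\equiv 0\pmod{n}$ as $\gcd(a',n)=1$, and the minus sign, $2b'+r'a'\equiv 0\pmod{n}$, is reduced to the plus sign by a single application of $c_6$ to the target --- which also makes it transparent that only the target is modified and that $\gcd(a',n)=1$ is retained, exactly as the statement requires.
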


\begin{proof}
First, consider the case where $r\neq 0$. Then since $\gcd(a,n)\neq 1$, the sections $E_0, E_1$
of $F_r(a,b)$ have nontrivial isotropy. On the other hand, $E_0,E_1$ have nonzero self-intersections,
so that they can not be mapped to fibers of $F_{r^\prime}(a^\prime,b^\prime)$ with nontrivial isotropy.
Consequently, the sections $E_0^\prime,E_1^\prime$ of $F_{r^\prime}(a^\prime,b^\prime)$
must also have nontrivial isotropy, and furthermore, $r^\prime=\pm r$ and $a^\prime=\pm a$. 
On the other hand, observe that $b\neq 0\pmod{n}$ and $b+ra\neq 0\pmod{n}$ because 
$\gcd(a,n)\neq 1$ but $\gcd(a,b,n)=1$. (In other words, there are no fixed fibers.) Finally, note that 
$c_2$, $c_3$ act transitively on the set $\{x_{ij}\}$, so that with an application of $c_2,c_3$ to
$F_{r^\prime}(a^\prime,b^\prime)$ if necessary, we may assume that the equivariant 
diffeomorphism from $F_r(a,b)$
to $F_{r^\prime}(a^\prime,b^\prime)$ sends $x_{00}$ to $x_{00}^\prime$. This particularly implies
$(a^\prime,b^\prime)=\pm (a,b)$ as unordered pairs. With a further application of $c_1$, we obtain
$(a^\prime,b^\prime)=(a,b)$ as ordered pairs (note that $a\neq \pm b$ because otherwise, $\gcd(a,b,n)
\neq 1$, and that $a^\prime=\pm a$). Finally, $r^\prime=r$, because $E_0$ must be sent to $E_0^\prime$, 
hence $F_r(a,b)=F_{r^\prime}(a^\prime,b^\prime)$ if $r\neq 0$.

Next, we assume $r=0$. If $\gcd(a^\prime,n)\neq 1$, then $r^\prime$ must be $0$, so that with an
application of $c_4$ if necessary, we may assume the equivariant diffeomorphism from $F_r(a,b)$
to $F_{r^\prime}(a^\prime,b^\prime)$ sends sections to sections. It follows easily that
$F_r(a,b)=F_{r^\prime}(a^\prime,b^\prime)$ up to a sequence of canonical equivariant 
diffeomorphisms.

Finally, consider the case where $r=0$ and $\gcd(a^\prime,n)=1$. We observe that 
$b^\prime+r^\prime a^\prime=\pm b^\prime \pmod{n}$ must be true because it is true for $F_r(a,b)$,
and $F_r(a,b)$ is equivariantly diffeomorphic to $F_{r^\prime}(a^\prime,b^\prime)$. Consequently,
either $r^\prime a^\prime=0 \pmod{n}$, or $2b^\prime+r^\prime a^\prime=0\pmod{n}$. In the latter
case, an application of $c_6$ will reduce it to the former case, which is equivalent to $r^\prime=0
\pmod{n}$. This proves the lemma. 

\end{proof}

Note that in the latter case of Lemma 5.2, we can apply $c_4$ to $F_r(a,b)$, and with a further 
application of $c_5$ to $F_{r^\prime}(a^\prime,b^\prime)$ if necessary, we may arrange to have
either $F_r(a,b)=F_{r^\prime}(a^\prime,b^\prime)$, or $a^\prime=a$, $b^\prime=b$, $r^\prime=r-n$,
and $\gcd(a,n)=1$. Furthermore, there is an orientation-preserving equivariant diffeomorphism 
from $F_r(a,b)$ to $F_{r^\prime}(a^\prime,b^\prime)$, which sends $F_i$ to $F_i^\prime$ if
the fibers are fixed, and sends $x_{ij}$ to $x_{ij}^\prime$ when $n\neq 2$ if the fixed points 
are isolated. 

With the preceding understood, the main technical result is summarized in the following proposition.

\begin{proposition}
Suppose the $G$-actions are non-pseudo-free and $n\neq 2$ unless $r,r^\prime$ are even.
Then there are no orientation-preserving equivariant diffeomorphisms from $F_r(a,b)$ to
$F_{r^\prime}(a^\prime,b^\prime)$, which send $F_i$ to $F_i^\prime$ if the fibers are fixed, 
and send $x_{ij}$ to $x_{ij}^\prime$ when $n\neq 2$ if the fixed points are isolated, where 
$a^\prime=a$, $b^\prime=b$, $r^\prime=r-n$, and $\gcd(a,n)=\gcd(a^\prime,n)=1$. 
\end{proposition}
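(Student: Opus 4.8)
The plan is to argue by contradiction, with the engine being the equivariant Gromov--Taubes count of invariant $J$-holomorphic sections developed in Sections~3 and~4, now applied to the section $C_0$ furnished by Lemma 2.4 (or Lemma 2.3, according to the parity of $r-n$). First I would dispose of the case $n$ odd: then $r^\prime=r-n$ has parity opposite to $r$, so the underlying four-manifolds of $F_r(a,b)$ and $F_{r-n}(a,b)$ are $\s^2\times\s^2$ and $\C\P^2\#\overline{\C\P^2}$ in one order or the other, hence not even homeomorphic, and no equivariant diffeomorphism can exist. Thus the substance is the case $n$ even (including the admissible boundary $n=2$ with $r,r^\prime$ even), where the two surfaces share the same underlying manifold and the same fixed-point data, so a genuinely gauge-theoretic invariant is needed to separate them.

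So suppose an orientation-preserving equivariant diffeomorphism $\Phi\colon F_r(a,b)\to F_{r-n}(a,b)$ with the stated properties exists. The first step is to transport the holomorphic zero-section: $C:=\Phi(E_0)$ is a $G$-invariant smooth $(-r)$-sphere in $F_{r-n}(a,b)$ which, because $\Phi$ carries fibers to fibers and $x_{ij}$ to $x_{ij}^\prime$ (and $F_i$ to $F_i^\prime$ when the fibers are fixed), is a section of the fibration meeting the prescribed fixed points with exactly the local representations that $E_0$ carries in $F_r(a,b)$; in particular the weight of the $G$-action in the normal direction of $C$ at the relevant fixed point is again $a$, and $\gcd(a,n)=1$ guarantees the induced action on $C$ is effective. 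At this stage $F_{r-n}(a,b)$ carries two distinguished invariant smooth sections: its own negative section $E_0^\prime$, with $(E_0^\prime)^2=-(r-n)$, and the transported sphere $C$, with $C^2=-r$; in homology $[C]=[E_0^\prime]-\tfrac{n}{2}[F]$, which is precisely where the evenness of $n$ enters.

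Next I would run the machinery of Sections~3 and~4 on $F_{r-n}(a,b)$. For a generic $G$-invariant $\omega$-compatible $J$, Lemma 2.4 (resp. Lemma 2.3) produces a $G$-invariant $\s^2$-fibration together with a $G$-invariant $J$-holomorphic section $C_0$, and the virtual-dimension formula together with the transversality bound $d\ge 0$ confines $C_0^2$ to a short explicit list governed by which fixed points lie on $C_0$, exactly as in the derivations of Lemmas 4.3 and 4.4. Applying this with the genuine negative section $E_0^\prime$, whose self-intersection $-(r-n)$ is small relative to $n$, the extension carried out in Section~5 pins the generic value to $C_0^2=-(r-n)$. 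The contradiction is then extracted from the transported section $C$, which plays the role of the invariant $(-1)$-sphere of Section~4: the Micallef--White local-intersection computation of Lemma 4.3 gives $C\cdot C_0\equiv a\pmod n$ from the normal weight, while the homological identity $[C]=[E_0^\prime]-\tfrac{n}{2}[F]$ together with the section relation $C_0^2=2(C\cdot C_0)+r$ forces $C_0^2$ into a residue modulo $2n$ determined by $a$ and $r$. Running through the possibilities for the pair of fixed points on $C_0$, as in Cases (i) and (ii) of Section~4, this residue is incompatible with the value $-(r-n)$ forced by $E_0^\prime$; since the generic section $C_0$ is unique for a fixed $J$, the two determinations cannot coexist. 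Conceptually, the equivariant Gromov--Taubes invariant attached to the distinguished negative-section class is non-vanishing for the surface that actually contains an invariant $(-r)$-section and vanishing for $F_{r-n}(a,b)$, so $\Phi$ cannot exist.

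The step I expect to be the main obstacle is the \emph{vanishing} half of this scheme: establishing that a generic invariant $J$ on $F_{r-n}(a,b)$ genuinely cannot support the section that $C$ would force. Unlike the existence results of Theorems 1.1 and 1.2, this requires that the self-intersections in play be small enough relative to $n$ for the virtual-dimension list to be exhaustive, and it requires an unambiguous reading of the local weights at the fixed points through which $C_0$ passes -- weights that are pinned down only because the $\Phi$-correspondence $x_{ij}\mapsto x_{ij}^\prime$, $F_i\mapsto F_i^\prime$ is built into the hypotheses. A second, more bookkeeping obstacle is the separate treatment of the sub-cases where an invariant fiber is fixed, i.e. $a\equiv 0$ or $a\equiv n-1 \pmod n$, paralleling Case (ii) of Section~4, and of the boundary case $n=2$ with $r,r^\prime$ even, where only the even-self-intersection conclusion of Lemma 2.4 is available. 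In each of these the same plan -- force the generic value of $C_0^2$ from the small section $E_0^\prime$ and contradict it with the transported section $C$ -- should go through after substituting the relevant explicit weights.
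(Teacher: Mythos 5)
There is a genuine gap, and it sits exactly where you predicted the main obstacle would be: the vanishing half. Write $r^\prime=r-n$. Your contradiction plays off two determinations of the generic section's square --- $C_0^2=-r^\prime$ ``forced by $E_0^\prime$'' versus a residue mod $2n$ forced via Lemma 4.3 by the transported sphere $C$ with $C^2=-r$, $[C]=[E_0^\prime]-\tfrac n2[F]$. But the virtual-dimension formula alone only confines $C_0^2$ to the list $\{-r^\prime,\,-n+2b+r^\prime,\,n-2b-r^\prime\}$; pinning it down further requires the Lemma 4.3 congruence, which is valid only for $J$ equal, near the two fixed points, to a $\hat{J}_0$ making the \emph{reference} sphere locally holomorphic. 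You cannot adapt one $J$ to both $C$ and $E_0^\prime$ at once: both pass through $x_{00}^\prime,x_{10}^\prime$, and the same Micallef--White computation applied to the pair $(C,E_0^\prime)$ would force $C\cdot E_0^\prime\equiv -r^\prime\pmod{n}$, against the homological value $-r^\prime-\tfrac n2$; so no invariant $J$ is adapted to both, and your two determinations never coexist for a single $J$ (uniqueness of $C_0$ for fixed $J$ cannot bridge different $J$'s). Worse, for a single generic $J$ adapted to $C$ the arithmetic is \emph{consistent}, not contradictory: in case (b) ($x_{00}^\prime,x_{11}^\prime\in C_0$) your own section relation gives $C_0^2=2(C\cdot C_0)+r\equiv 2b+r^\prime+n\pmod{2n}$, which agrees mod $2n$ with the dimension value $-n+2b+r^\prime$, and similarly in case (c); only case (a) is excluded. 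So with your direction of transport the machinery does not pin $C_0^2$ to $-r^\prime$ and then clash --- the generic section simply lands in case (b) or (c) and nothing contradicts anything. This failure is structural: pushing the more negative section into $F_{r-n}(a,b)$ produces a sphere of square $-r\le -n$ that the $d\ge 0$ existence machinery can never realize holomorphically, so it can only ever serve as a congruence reference, and as a reference it proves too little.

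The paper runs the transport the other way and supplies the vanishing by degeneration, not by a second generic count. It sets $C=f^{-1}(E_0^\prime)\subset F_r(a,b)$, so $C^2=-r^\prime$ with $0\le r^\prime<n$ and $[C]=[E_0]+\tfrac n2[F]$; there the same mod-$2n$ comparison comes out \emph{opposite} to yours (in case (b) the congruence gives $C_0^2\equiv 2b+r^\prime\pmod{2n}$ versus the dimension value $-n+2b+r^\prime$), so cases (b), (c) are killed and one gets Lemmas 5.5/5.6: for generic invariant $J$ adapted to $C$ there is an invariant embedded $J$-holomorphic $(-r^\prime)$-sphere homologous to $C$ through the prescribed fixed points. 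The contradiction then comes from letting $J_k^\prime\to J_0$ (the integrable structure on $F_r(a,b)$) in $C^0$ with uniform $C^1$ bounds, applying \emph{equivariant Gromov compactness} to the resulting spheres, and eliminating every admissible component of the limit cusp-curve among $E_0,E_1,F_0,F_1$ and free orbits using $2C\cdot E_1=-r^\prime+r=n$: free orbits contribute at least $n$, invariant fibers carry multiplicity at least $n$ via the node-weight relation $g_\omega=g_\nu^{-1}$, $E_1$ is excluded by $E_1^2=r\ge n$, and $E_0$ by $E_0\cdot E_1=0$. This compactness/limit step, entirely absent from your proposal, is the actual mechanism; without it (or some substitute vanishing argument), the proof does not close. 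Your preliminary reduction for $n$ odd and your catalogue of sub-cases (fixed fibers, $n=2$ with $r,r^\prime$ even) are fine, though note also that $\Phi(E_0)$ need not literally be a section of the target fibration, only a sphere meeting the fiber class once.
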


\begin{remark}
The traditional method for distinguishing non-free smooth finite group actions on a four-manifold
has been through the knotting of the $2$-dimensional fixed-point set of the actions 
(cf. \cite{Gif, Go, HLM, FSS, KR}). Here we give examples of $G$-Hirzebruch surfaces which are not
equivariantly diffeomorphic by Proposition 5.3, but can not be distinguished using the traditional method. 

Consider $F_1(1,3)$ and $F_5(1,3)$ with $G=\Z_4$. The actions are semi-free, and the 
fixed-point set of each $G$-surface consists of two isolated fixed points and one fixed fiber.
The complement of the fixed fiber in each $G$-surface is a trivial complex line bundle over
$\s^2$. It is easily seen that the $G$-actions on each complement are equivariantly diffeomorphic, i.e., there is no ``knotting" of the $2$-dimensional fixed-point set. 

\end{remark}

\noindent{\bf Proof of Proposition 5.3}

\vspace{3mm}

Suppose to the contrary, there is such an orientation-preserving equivariant diffeomorphism 
$f: F_r(a,b)\rightarrow F_{r^\prime}(a^\prime,b^\prime)$. We first observe that $r,r^\prime$ must 
have the same parity, so that $n$ is necessarily even. Without loss of generality,
we assume $a^\prime=a=1$. Furthermore, with an application of $c_3$ to both $F_r(a,b)$ and
$F_{r^\prime}(a^\prime,b^\prime)$ if necessary, we may assume $0\leq 2b^\prime=2b\leq n$.

Next, we claim that with an application of $c_5,c_6$ to both $G$-Hirzebruch surfaces if necessary,
one can arrange so that $r^\prime$ and $r$ satisfy the following constraints:
$$
0\leq r^\prime<n, \;\; b+r^\prime\leq n,\;\; \mbox{ and } n\leq r=r^\prime+n<2n.
$$
To see this, note that with $c_5$, we may assume $0\leq r,r^\prime<2n$, and assuming without loss
of generality that $r^\prime<r$, we have $0\leq r^\prime<n\leq r=r^\prime+n<2n$. If $b+r^\prime\leq n$, 
then we are done. Suppose $b+r^\prime>n$. Then we apply $c_6$ to both $G$-Hirzebruch surfaces 
and replace $r, r^\prime$ by $\tilde{r}=4n-2b-r$ and $\tilde{r}^\prime=2n-2b-r^\prime$ respectively. 
Note that $\tilde{r}^\prime-\tilde{r}=-2n-(r^\prime-r)=-n$, so that $\tilde{r}^\prime=\tilde{r}-n$ continues 
to hold. We will show that the conditions $0<\tilde{r}^\prime$ and $b+\tilde{r}^\prime<n$ are satisfied, 
with which $n\leq \tilde{r}=\tilde{r}^\prime+n<2n$ follows easily. With this understood, 
$0< \tilde{r}^\prime$ follows from $2b\leq n$ and $r^\prime<n$, and $b+\tilde{r}^\prime<n$
follows from the assumption $b+r^\prime>n$. Hence the claim. 

With the preceding preparation, we shall denote $F_r(a,b)$ by $X$; in particular, $X$ is either
$\C\P^2\# \overline{\C\P^2}$ or $\s^2\times\s^2$.  Let $C\subset X$ be the 
pre-image of the holomorphic $(-r^\prime)$-section $E_0^\prime\subset F_{r^\prime}(a^\prime,b^\prime)$ under $f$. Then $C$ is a $G$-invariant, smoothly embedded two-sphere in $X$ with self-intersection
$-r^\prime$. Let $J_0$ be the $G$-invariant complex structure on $X=F_r(a,b)$, and let $\omega_0$ 
be a fixed $G$-invariant K\"{a}hler form. Our goal is to first show that for a certain generic $G$-invariant,
$\omega_0$-compatible $J$, there is a $G$-invariant, embedded $J$-holomorphic two-sphere $\tilde{C}$
with self-intersection $-r^\prime$. On the other hand, by choosing a sequence of such $J$ which 
converges to $J_0$, the corresponding $J$-holomorphic $(-r^\prime)$-spheres will converge to a
cusp-curve $C_\infty$ by Gromov compactness. Carefully analyzing $C_\infty$ will lead to a 
contradiction to the complex geometry of $J_0$, which proves that $f$ should not exist. 

We begin our proof by giving an orientation to $C$. Since the $G$-action is non-pseudo-free, it
follows easily that either $F_0$ or $F_1$ has nontrivial isotropy. Without loss of generality, we assume
$F_0$ has nontrivial isotropy. Then it follows that under $f$, $F_0$ is mapped to $F_0^\prime$.
As a consequence, we see that $C$ intersects $F_0$ transversely. With this understood, we shall
orient $C$ so that $C\cdot F_0=1$. 

Before we proceed further, we shall fix the following notations which are compatible with those used
in Lemmas 2.3 and 2.4. For $X=\C\P^2\#\overline{\C\P^2}$, let $e_0,e_1\in H^2(X)$ be a basis 
such that $c_1(K_{\omega_0})=-3e_0+e_1$; in particular, $F_0=F_1=e_0-e_1$. For $X=\s^2\times
\s^2$, we choose a basis $e_1,e_2\in H^2(X)$ such that $c_1(K_{\omega_0})=-2e_1-2e_2$, and
moreover, $F_0=F_1=e_2$. 

\vspace{2mm}

As in Section 4, it is more convenient to divide our discussions according to the following two scenarios :
\begin{itemize}
\item [{(i)}] Neither $F_0$ nor $F_1$ is fixed by $G$, i.e., $0<b<n-r^\prime$; in particular, $n\neq 2$.
\item [{(ii)}] Either $F_0$ or $F_1$ is fixed by $G$, i.e., either $b=0$ or $b+r^\prime=0$ or $n$.
\end{itemize}

\vspace{2mm}

{\bf Case (i): Neither $F_0$ nor $F_1$ is fixed by $G$}. In this case, $f$ sends $x_{ij}$ to $x_{ij}^\prime$
for all $i,j$; in particular, $C$ contains $x_{00}$ and $x_{01}$. Furthermore, we remark that since 
the space of $G$-invariant $\omega_0$-compatible $J$ is contractible, the rotation numbers at $x_{0j}$, 
$x_{1j}$, which are $(1,\pm b)$, $(-1, \pm (b+r^\prime))$ respectively respect to $J_0$, remain 
unchanged with respect to any other $\omega_0$-compatible $J$. In particular, the weight of the action
on $C$ is $+1$, $-1$ at $x_{00}$ and $x_{01}$ respectively. 

Since $n>2$ and even, Lemma 4.2 is true so that the tangent space of $C$ at $x_{0j}$ is 
$J_0$-invariant for $j=0,1$. As in Section 4, one can construct a $G$-invariant, 
$\omega_0$-compatible, almost complex structure $\hat{J}_0$ in a neighborhood of $x_{00}$,
$x_{10}$, such that 
\begin{itemize}
\item $\hat{J}_0$ agrees with $J_0$ at $x_{00},x_{10}$;
\item for any $i=0,1$, if $C$ intersects the $G$-invariant fiber $F_i$ transversely at $x_{i0}$, then
$C$ is $\hat{J}_0$-holomorphic near $x_{i0}$. 
\end{itemize}

The following lemma is a generalization of Lemma 4.4. 

\begin{lemma}
For any generic $G$-invariant $J$ equaling $\hat{J}_0$ in a neighborhood of $x_{00},x_{10}$, 
there is an embedded $G$-invariant $J$-holomorphic two-sphere $\tilde{C}$ containing $x_{00}$, 
$x_{10}$ such that $\tilde{C}$ and $C$ are homologous; in particular, $\tilde{C}^2=-r^\prime$. 
\end{lemma}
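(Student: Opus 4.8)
The plan is to take $\tilde C$ to be the $J$-holomorphic section $C_0$ produced by Lemma 2.3(2) (or Lemma 2.4(2) when $X=\s^2\times\s^2$). Recall that such a $C_0$ exists by Taubes' theorem, is $G$-invariant and embedded with $C_0^2<0$ (odd for $\C\P^2\#\overline{\C\P^2}$, even for $\s^2\times\s^2$), and that the associated $\s^2$-fibration is $G$-invariant with the two invariant fibers being $F_0,F_1$. Writing $[C_0]=e_1-kF$ (resp.\ $e_1-ke_2$) with $k\geq 0$, so that $C_0^2=-1-2k$ (resp.\ $-2k$), the whole lemma reduces to showing that $C_0$ passes through $x_{00}$ and $x_{10}$ and that $k=k_0$, where $k_0$ is determined by $[C]=e_1-k_0F$ (resp.\ $e_1-k_0e_2$) with $C^2=-r'$; note $0\leq k_0<n$. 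Once this is shown, $[C_0]=[C]$ and we may set $\tilde C=C_0$.

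Two ingredients pin down $C_0$. First, being a section, $C_0$ meets each invariant fiber in a single fixed point, so it passes through one of $x_{00},x_{01}$ and one of $x_{10},x_{11}$, giving four cases. In each I would compute the virtual dimension $d$ exactly as in Sections 3 and 4: the orbifold index formula of \cite{CR} together with the adjunction formula (\cite{C}, Theorem 3.1) gives $d=\tfrac1n C_0^2+\tfrac2n+1-\sum_i\tfrac{m_{i,1}+m_{i,2}}{n}$, where the local weights $(m_{i,1},m_{i,2})$ at the two fixed points on $C_0$ are read off from the rotation numbers $(1,b)$, $(1,-b)$, $(-1,b+r')$, $(-1,-b-r')$ at $x_{00},x_{01},x_{10},x_{11}$ (these are independent of the $G$-invariant $J$ because the space of such $J$ is contractible). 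As in Lemma 4.4, the transversality theorem survives the restriction to those $J$ that equal $\hat J_0$ near $x_{00},x_{10}$, since no $J$-holomorphic curve can lie entirely inside such a neighborhood; hence for generic such $J$ one has $d\geq 0$, which bounds $k$ from above in each case.

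Second, I would compute $C\cdot C_0\pmod n$ by the local method of Lemma 4.3. Since $C\cdot F_0=C\cdot F_1=1$, the sphere $C$ meets $F_0,F_1$ transversely at $x_{00},x_{10}$, so by Lemma 4.2 and the construction of $\hat J_0$ the curve $C$ is $\hat J_0$-holomorphic near both points, and Micallef-White \cite{MW} applies there. At a fixed point lying on both $C$ and $C_0$ the two sections are tangent (both tangent to the relevant weight-$(\pm1)$ eigendirection), and the local intersection index of two such $G$-invariant curves, with tangent-direction weight $s$ and normal weight $t$, is $\equiv s^{-1}t\pmod n$, while away from the fixed points the free $G$-orbits contribute $0\pmod n$. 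Since $C\cdot C_0=-1-(k+k_0)$, this determines the residue of $k$ mod $n$ in each of the four cases.

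Combining the two ingredients eliminates all cases but the desired one. The case $x_{01},x_{11}$ dies at once, as there $d\geq 0$ forces $C_0^2\geq r'\geq 0$, contradicting $C_0^2<0$. In the two mixed cases the congruence forces a residue of $k$ in $[0,n)$ that strictly exceeds the transversality bound on $k$, so no admissible $k$ exists; this is where the standing inequalities $0<b<n-r'$, $0\leq r'<n$, and $n>2$ even are used. In the remaining case $x_{00},x_{10}$ one finds $C\cdot C_0\equiv -r'\pmod n$, hence $k\equiv k_0\pmod n$, together with $0\leq k\leq k_0<n$ from $d\geq 0$; this forces $k=k_0$, i.e.\ $C_0^2=-r'$ and $[C_0]=[C]$, so $\tilde C=C_0$ passes through $x_{00},x_{10}$ and is homologous to $C$. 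I expect the main obstacle to be the second ingredient: correctly setting up the Micallef-White local model at the tangential fixed points and the weight bookkeeping $s^{-1}t\bmod n$, and then checking that the resulting residues genuinely overshoot the index bound uniformly across the admissible ranges of $b$ and $r'$, so that the two mixed cases are excluded.
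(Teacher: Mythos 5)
Your overall route coincides with the paper's proof of Lemma 5.5: locate the two fixed points of $C_0$ among the four possibilities (a)--(d), bound $C_0^2$ via the orbifold index inequality $d\geq 0$ for generic invariant $J$ (and your observation that the restriction to $J=\hat{J}_0$ near $x_{00},x_{10}$ does not spoil transversality is exactly the paper's, inherited from Lemma 4.4), and compute $C\cdot C_0 \bmod n$ by the Micallef--White local analysis of Lemma 4.3. Your elimination of the mixed cases (b), (c) by ``the residue of $k$ overshoots the index bound'' is a correct variant of the paper's argument, which instead uses integrality of $d$ to pin $C_0^2$ down exactly ($C_0^2=-n+2b+r'$, resp.\ $-2b-r'+n$) and contradicts the congruence $C_0^2\equiv 2b+r'$, resp.\ $-2b-r' \pmod{2n}$; the two bookkeepings agree (modulo the small slip that $C\cdot C_0=-(k+k_0)$, not $-1-(k+k_0)$, on $\s^2\times\s^2$).

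There is, however, one genuine gap: case (d) does not ``die at once,'' because your premise $C_0^2<0$ is not available in this lemma. Here $F_0$ has nontrivial isotropy and is therefore $J$-holomorphic for \emph{every} invariant $J$, so the $\s^2$-fibration exists and $C_0$ is a section even in case (1) of Lemmas 2.3/2.4 -- and in case (1) of Lemma 2.4 one has $C_0=e_1$ with $C_0^2=0$ (the remark that $C_0^2$ is negative applies only to case (2)). Since $r'=0$ lies in the standing range $0\leq r'<n$ (and then $r=n$ is even, so $X=\s^2\times\s^2$), case (d) with $d=(C_0^2-r')/n=0$ is perfectly consistent: $C_0$ is homologous to $C$, but it passes through the \emph{wrong} fixed points $x_{01},x_{11}$, so setting $\tilde{C}=C_0$ fails the requirement $x_{00},x_{10}\in\tilde{C}$. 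The paper closes exactly this subcase by an extra step you are missing: since $C_0^2=0$ and its class is fixed by $G$, the spheres in that class sweep out a second $G$-invariant $\s^2$-fibration of $X$; the $G$-invariant fiber of this second ruling through $x_{00}$ is disjoint from $C_0$, hence avoids $x_{11}$ and must meet $F_1$ at $x_{10}$, and this curve is the desired $\tilde{C}$. For $r'>0$ your contradiction in case (d) is genuine ($d\geq 0$ would force $C_0^2\geq r'>0\geq C_0^2$), so once this $r'=0$ subcase is added your proof matches the paper's.
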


\begin{proof}
For any given $G$-invariant $J$ equaling $\hat{J}_0$ in a neighborhood of $x_{00},x_{10}$, 
we apply Lemmas 2.3 and 2.4 to it. We further note that, in the present situation, since $F_0$ 
has nontrivial isotropy, it is automatically $J$-holomorphic, so that the $\s^2$-fibration structure
on $X$ always exists, and we may consider $C_0$ as a $J$-holomorphic section even in case (1)
of the lemmas. 

As we have seen before, there are four possibilities for the fixed points on $C_0$:
$$
\mbox{(a) $x_{00},x_{10}\in C_0$; \;\; (b) $x_{00},x_{11}\in C_0$; \;\; (c) $x_{01},x_{10}\in C_0$; \;\;
(d) $x_{01},x_{11}\in C_0$.}
$$
Moreover, the weights $(m_{i,1},m_{i,2})$, $i=1,2$, in the formula for the virtual dimension $2d$ 
of the moduli space of $J$-holomorphic curves at $C_0$ can be read off from the rotation numbers 
and are given correspondingly as follows:
\begin{itemize}
\item [{(a)}] $(m_{1,1},m_{1,2})=(1,b)$, $(m_{2,1},m_{2,2})=(1, n-b-r^\prime)$;
\item [{(b)}] $(m_{1,1},m_{1,2})=(1,b)$, $(m_{2,1},m_{2,2})=(1, b+r^\prime)$;
\item [{(c)}] $(m_{1,1},m_{1,2})=(1,n-b)$, $(m_{2,1},m_{2,2})=(1, n-b-r^\prime)$;
\item [{(d)}] $(m_{1,1},m_{1,2})=(1,n-b)$, $(m_{2,1},m_{2,2})=(1, b+r^\prime)$
\end{itemize}
where $b,r^\prime$ satisfy the inequalities $0<b<n-r^\prime$. Correspondingly, we have
$$
(a) \;d=\frac{C_0^2+r^\prime}{n}, \; (b) \; d=\frac{C_0^2+n-2b-r^\prime}{n}, \; (c)\;  
d=\frac{C_0^2+2b+r^\prime-n}{n}, \;
(d)\;  d=\frac{C_0^2-r^\prime}{n}.
$$
We choose $J$ to be a generic $G$-invariant almost complex structure equaling $\hat{J}_0$ in a
neighborhood of $x_{00},x_{10}$. Then $d\geq 0$, so that with $C_0^2\leq 0$, we obtain
$$
(a) \; C_0^2=-r^\prime, \; (b) \; C_0^2= -n+2b+r^\prime, \; (c)\;  C_0^2=-2b-r^\prime+n,
$$ 
and case (d) is a contradiction unless $r^\prime=0$, and in this case $C_0^2=0$. 

In order to rule out cases (b), (c), we observe that Lemma 4.3 continues to hold here, i.e., 
$C\cdot C_0=b \pmod{n}$ if $x_{00}, x_{11}\in C_0$ and 
$C\cdot C_0=-b-r^\prime \pmod{n}$ if $x_{01},x_{10}\in C_0$. With this understood, we need 
to discuss separately according to $X=\C\P^2\# \overline{\C\P^2}$ or $X=\s^2\times \s^2$.

Suppose $X=\C\P^2\# \overline{\C\P^2}$. We write $C=(u+1)e_0-ue_1$, $C_0=(v+1)e_0-ve_1$
for some $u,v\in\Z$. Then $C_0^2+C^2=2(u+v+1)$, and $C\cdot C_0=u+v+1$. Consequently, 
in case (b), $u+v+1=b\pmod{n}$ so that $C_0^2=2(u+v+1)-C^2=2b+r^\prime\pmod{2n}$, and 
in case (c), $u+v+1=-b-r^\prime\pmod{n}$, so that $C_0^2=-2b-r^\prime \pmod{2n}$. It follows 
easily that in both cases we reached a contradiction. 

Suppose $X=\s^2\times \s^2$. We write $C=e_1+ue_2$, $C_0=e_1+ve_2$ for some $u,v\in\Z$.
Then $C_0^2+C^2=2(u+v)$ and $C\cdot C_0=u+v$. The same argument shows that 
$C_0^2=2b+r^\prime \pmod{2n}$ in case (b) and $C_0^2=-2b-r^\prime \pmod{2n}$ in case (c), 
which is a contradiction.

Note that in case (a), the above calculations also show that $C^2=C_0^2$ implies that $C,C_0$
are homologous. In this case, $\tilde{C}=C_0$, and the lemma follows.

Finally, suppose $r^\prime=0$ and case (d) occurs. Then $C_0$ gives rise to a $\s^2$-fibration
on $X$ whose fibers are embedded $J$-holomorphic two-spheres in the class of $C_0$,
which is $G$-invariant because $G$ fixes the class of $C_0$. It follows easily that
there is a $G$-invariant fiber containing the fixed-points $x_{00},x_{10}$, which is the 
desired $J$-holomorphic curve $\tilde{C}$. 

\end{proof}

We shall next construct a sequence of suitable $G$-invariant $\omega_0$-compatible
almost complex structures converging to $J_0$. To this end, let $g_0$ be the K\"{a}hler metric
and $\hat{g}_0=\omega_0(\cdot,\hat{J}_0 \cdot)$ be the metric associated to $\hat{J}_0$.
Fix a large enough $k_0>0$ and let $\rho$ be a cutoff function such that $\rho(t)\equiv 1$ for 
$t\leq 1$ and $\rho(t)\equiv 0$ for $t\geq 2$, and $0\leq \rho(t)\leq 1$ and 
$|\rho^\prime(t)|\leq 100$. Then for any integer $k>k_0$, we define a $G$-invariant metric $g_k$ 
on $X$ by 
$$
g_k(x)=g_0(x)+(\rho(k|x-x_{00}|)+\rho(k|x-x_{10}|))(\hat{g}_0(x)-g_0(x)), \;\; \forall x\in X. 
$$
Here $|x-x_{i0}|$ is the distance from $x$ to $x_{i0}$, measured with respect to the K\"{a}hler
metric $g_0$. We choose $k_0$ large enough so that (i) $|x-x_{i0}|\leq\frac{3}{k_0}$ is 
contained in the neighborhood of $x_{i0}$ where $\hat{J}_0$ is defined, and (ii) 
$\max \{k_0|x-x_{i0}|, i=0,1\}>2$. Then it follows easily that (1) $g_k$ equals $\hat{g}_0$ in a 
neighborhood of each $x_{i0}$ and converges to $g_0$ in $C^0$-topology as 
$k\rightarrow \infty$, and (2) the $C^1$-norm of $g_k$ is uniformly bounded by a constant 
depending only on $g_0$, $\omega_0$ and $\hat{J}_0$. With this understood, we let $J_k$
be the $\omega_0$-compatible almost complex structure determined by $g_k$. Then it is
clear that $J_k$ is $G$-invariant, and $J_k$ converges to $J_0$ in $C^0$-topology as 
$k\rightarrow \infty$, and the $C^1$-norm of $J_k$ is uniformly bounded by a constant 
depending only on $J_0$, $\omega_0$ and $\hat{J}_0$.

We apply Lemma 5.5 to $J_k$ and for each $k>k_0$, pick a generic $G$-invariant $J_k^\prime$ from Lemma 5.5, such that the $C^1$-norm of $J_k^\prime-J_k$ is bounded by $1/k$. Then 
$\{J_k^\prime\}$ is a sequence of $G$-invariant $\omega_0$-compatible almost complex 
structures such that
\begin{itemize}
\item $J_k^\prime$ converges to $J_0$ in $C^0$-topology as $k\rightarrow \infty$, 
\item the $C^1$-norm of $J_k^\prime$ is uniformly bounded by a constant depending only 
on $J_0$, $\omega_0$ and $\hat{J}_0$, and 
\item there is a $G$-invariant $J_k^\prime$-holomorphic $(-r^\prime)$-sphere, 
denoted by $C_k$, which contains $x_{00}$ and $x_{10}$. 
\end{itemize}

Next we shall apply the Gromov Compactness Theorem to the sequence $\{C_k\}$. To this end, 
let $\C\P^1$ be given the standard $G$-action with fixed points $0,\infty$, and let $j_0$ be the 
$G$-invariant complex structure on $\C\P^1$. Let $f_k:\C\P^1\rightarrow X$ be a $G$-equivariant 
$(J_k,j_0)$-holomorphic map which parametrizes $C_k$. Then by the Gromov Compactness Theorem, after re-parametrization if necessary, a subsequence of $f_k$ (still denoted by $f_k$ 
for simplicity) converges to a ``cusp-curve" $f_\infty: \Sigma\rightarrow X$ in the following sense 
as $k\rightarrow \infty$, where $\Sigma=\sum_\nu\Sigma_\nu$ is a nodal Riemann two-sphere.
\begin{itemize}
\item The maps $f_k$ converges to $f_\infty$ locally in H\"{o}lder $C^{1,\alpha}$-norm for some 
$\alpha>0$ (this follows from the fact that the $C^1$-norm of $J_k^\prime$ is uniformly bounded 
and by the standard elliptic estimates, e.g. cf. \cite{McDS}).
\item There is no energy loss, i.e., $(f_\infty)_\ast [\Sigma]=C$.
\item The map $f_\infty$ is $(J_0,j_0)$-holomorphic (this is due to the fact that $J_k^\prime$ converges to $J_0$ in $C^0$-topology). 
\end{itemize}
In the present situation, since each $f_k$ is $G$-equivariant, the convergence 
$f_k\rightarrow f_\infty$ is also $G$-equivariant. In particular, there is a $G$-action on the nodal Riemann two-sphere $\Sigma=\sum_\nu\Sigma_\nu$, with respect to which 
$f_\infty: \Sigma\rightarrow X$ is $G$-equivariant. It is important to note that the $G$-action on 
$\Sigma$ and the map $f_\infty$ have the following properties: 
\begin{itemize}
\item Each component $\Sigma_\nu$ is either $G$-invariant or is in a free $G$-orbit.
\item For any two distinct $G$-invariant components $\Sigma_\nu$, $\Sigma_\omega$ such that 
$z_0\in \Sigma_\nu\cap \Sigma_\omega$ which is a fixed-point of $G$, if $g_\nu,g_\omega\in G$
are the elements which act by a rotation of angle $2\pi/n$ in a neighborhood of 
$z_0\in \Sigma_\nu$ and $z_0\in \Sigma_\omega$ respectively, then $g_\omega=g_\nu^{-1}$. 
\end{itemize}
(There is an equivalent formulation of the above statements in terms of the Orbifold Gromov Compactness Theorem, see \cite{CR}.)

We proceed by finding what kind of possible components the $J_0$-holomorphic cusp-curve 
$f_\infty$ has, which are $J_0$-holomorphic two-spheres in $X$. To this end, recall that $X$ 
comes with a $J_0$-holomorphic $\C\P^1$-fibration over $\C\P^1$ which is $G$-invariant. The $J_0$-holomorphic two-spheres in $X$ are either fibers or sections of this fibration. There are 
two $G$-invariant fibers $F_0,F_1$, containing $x_{00},x_{10}$ respectively, and
two $G$-invariant sections $E_0,E_1$ which has self-intersection $-r$ and $r$ respectively, and these are the only $G$-invariant $J_0$-holomorphic two-spheres in $X$ 
(cf. Wilczynski \cite{Wil2}, \S 4).

The following observation greatly simplifies the analysis of the components of the cusp-curve 
$f_\infty$, that is,
$$
2 (f_\infty)_\ast [\Sigma]\cdot E_1=2C\cdot E_1=C^2+E_1^2=-r^\prime+r=n.
$$
(We have seen it in the proof of Lemma 5.5.) An immediate consequence of this is that there are 
no components $f_\infty(\Sigma_\nu)$ which are not $G$-invariant, because the $G$-orbit of 
such a component will contribute at least $n$ to the intersection number 
$(f_\infty)_\ast [\Sigma]\cdot E_1$, which contradicts $2 (f_\infty)_\ast [\Sigma]\cdot E_1=n$. 
Furthermore, $E_1$ can not show up in the cusp-curve either because $E_1^2=r\geq n$. 
Consequently, the only $J_0$-holomorphic two-spheres which are possibly allowed 
in the cusp-curve $f_\infty$ are the $(-r)$-section $E_0$ and the invariant fibers $F_0,F_1$. 

Next we show that if $F_i$, $i=0$ or $1$, shows up in the cusp-curve $f_\infty$, the multiplicity 
must be at least $n$, which is also not allowed by $2(f_\infty)_\ast [\Sigma]\cdot E_1=n$.  To see this, suppose 
without loss of generality that the component $f_\infty:\Sigma_\nu\rightarrow X$ has image 
$F_0$. Then the homology class $(f_\infty)_\ast [\Sigma_\nu]=m_\nu F_0$ for some $m_\nu>0$ 
such that $m_\nu=b \pmod{n}$ because the weight of the $G$-action in the direction of fiber 
$F_0$ equals $b$ at $x_{00}$. With this understood, let $z_0\in\Sigma_\nu$ such that 
$f_\infty (z_0)=x_{01}$. Then it follows easily that there must be another component 
$f_\infty:\Sigma_\omega\rightarrow X$ with $z_0\in \Sigma_\omega$. Furthermore, it must also 
have image $F_0$ because the other two allowable  $J_0$-holomorphic two-spheres $E_0$ 
and $F_1$ do not contain $x_{01}$. Let $m_\omega>0$ be the multiplicity of 
$(f_\infty)_\ast [\Sigma_\omega]$ in $F_0$. Then $m_\omega=-b\pmod{n}$, because the 
relation $g_\omega=g_\nu^{-1}$ in the Gromov Compactness Theorem we alluded to earlier, 
and the fact that the weight of the $G$-action in the direction of $F_0$ at $x_{01}$ equals $-b$. 
This proves our claim that the multiplicity of $F_0$ in the cusp-curve $f_\infty$ must be at least 
$n$ as $m_\nu+m_\omega=0\pmod{n}$. 

We conclude that $E_0$ is the only possible $J_0$-holomorphic curve in the cusp-curve 
$f_\infty$. However, this is also a contradiction because $E_0\cdot E_1=0$ but 
$(f_\infty)_\ast [\Sigma]\cdot E_1\neq 0$. This completes the proof of Proposition 5.3 when neither
$F_0$ nor $F_1$ is fixed by $G$.

\vspace{2mm}

{\bf Case (ii): Either $F_0$ or $F_1$ is fixed by $G$}. Without loss of generality, we shall only
consider the case where $F_0$ is fixed, which corresponds to $b=0$. Note that both of
$F_0,F_1$ are fixed by $G$ if and only if $b=0=r^\prime$; in particular, when $n=2$,
both $F_0,F_1$ are fixed because $b=0=r^\prime$ in this case. 

Let $x_0,x_1$ be the fixed points on $C$ such that $x_0\in F_0$. Then $x_1=x_{10}$ 
unless $F_1$ is also fixed by $G$, in which case $x_1\in F_1$. We shall fix a $\hat{J}_0$,
which is a $G$-invariant, $\omega_0$-compatible, integrable complex structure in a 
neighborhood of $x_{0}$ and $x_{1}$, such that 
\begin{itemize}
\item $\hat{J}_0$ agrees with $J_0$ at $x_{0},x_{1}$;
\item for any $i=0,1$ such that $C$ intersects $F_i$ transversely, there are holomorphic 
coordinates $z_1,z_2$ (with respect to $\hat{J}_0$) such that $C$ is 
given by $z_2=0$ and $F_i$ is given by $z_1=0$. 
\end{itemize}

Correspondingly, we have the following lemma in place of Lemma 5.5.

\begin{lemma}
For any generic $G$-invariant $J$ equaling $\hat{J}_0$ in a neighborhood of $x_{0}$ and 
$x_{1}$, there is an embedded $G$-invariant $J$-holomorphic two-sphere $\tilde{C}$ 
such that (1) $\tilde{C}$ and $C$ are homologous, and (2) $x_{10}\in \tilde{C}$.
In particular, $\tilde{C}^2=-r^\prime$. 
\end{lemma}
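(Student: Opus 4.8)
The plan is to run the argument of Lemma 5.5 in the present degenerate situation, where the pointwise-fixed fiber $F_0$ produces new orbifold weights along $C_0$, and then to supply the analogue of the intersection-number computation of Lemma 4.3 in order to eliminate the one unwanted configuration. First I would apply Lemma 2.3 (resp. Lemma 2.4 when $X=\s^2\times\s^2$) to a generic $G$-invariant $J$ that equals $\hat{J}_0$ near $x_0,x_1$. Since $F_0$ is pointwise fixed it is automatically $J$-holomorphic, so, exactly as in the preamble to Lemma 5.5, the $\s^2$-fibration exists with $F_0$ as one invariant fiber and a $G$-invariant section $C_0$. The section $C_0$ meets $F_0$ in a single fixed point $y_0$ and meets the other invariant fiber $F_1$ in a fixed point $y_1$; when $F_1$ is not fixed (i.e. $r^\prime\neq 0$) we have $y_1\in\{x_{10},x_{11}\}$, while if $F_1$ is also fixed ($r^\prime=0$) then $y_1$ is an arbitrary fixed point of $F_1$.

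Next I would read off the orbifold weights of $C_0/G$. At $y_0$ the tangent-to-$C_0$ (base) weight is $+1$ and the normal (fiber) weight is $0$, giving $(m_{1,1},m_{1,2})=(1,0)$; after normalizing the tangent weight to $+1$ one gets $(1,n-r^\prime)$ at $y_1=x_{10}$ and $(1,r^\prime)$ at $y_1=x_{11}$. Feeding these into the dimension formula of Section 3 yields $d=(C_0^2+r^\prime)/n$ when $y_1=x_{10}$ and $d=(C_0^2+n-r^\prime)/n$ when $y_1=x_{11}$. For generic $J$ the transversality theorem gives $d\geq 0$, and since $d$ is the complex dimension of the moduli space it is a non-negative integer; combined with $C_0^2\leq 0$ and the normalization $0\leq r^\prime<n$ this forces $C_0^2=-r^\prime$ in the first case and $C_0^2=r^\prime-n$ in the second, just as in Lemma 5.5.

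It then remains to discard the configuration $y_1=x_{11}$. Here I would invoke the intersection-number argument of Lemma 4.3 and of Section 4, Case (ii): writing $C$ and $C_0$ in the standard basis exactly as in Case (i) gives the identity $C_0^2=2(C\cdot C_0)+r^\prime$ for both $X=\C\P^2\#\overline{\C\P^2}$ and $X=\s^2\times\s^2$. When $y_1=x_{11}$ the curves $C$ and $C_0$ share no fixed point on $F_1$, since $C\cap F_1=\{x_{10}\}$ while $C_0\cap F_1=\{x_{11}\}$, so their only possible common fixed point lies on $F_0$, where the normal weight vanishes; by Micallef--White together with a small equivariant perturbation the local intersection there is $\equiv 0\pmod{n}$, and all remaining intersection points occur in free $G$-orbits. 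Hence $C\cdot C_0\equiv 0\pmod{n}$ and therefore $C_0^2\equiv r^\prime\pmod{2n}$, contradicting $C_0^2=r^\prime-n$. Thus $y_1=x_{10}$ and $C_0^2=-r^\prime=C^2$, which forces $C_0$ to be homologous to $C$; setting $\tilde{C}=C_0$ gives the desired sphere through $x_{10}$. In the degenerate case $r^\prime=0$ (both fibers fixed) the same count gives $C_0^2=0$, and $C_0$ generates a second $G$-invariant $\s^2$-fibration; taking $\tilde{C}$ to be its invariant fiber through the fixed point $x_{10}$ then completes the argument.

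The main obstacle I anticipate is the elimination of the $y_1=x_{11}$ configuration. Because $C$ is only $\hat{J}_0$-holomorphic near its two fixed points and need not be $J$-holomorphic elsewhere, one cannot quote global positivity of intersections, and the local analysis at the common $F_0$-point, where $C$ and $C_0$ may be tangent or even coincide, must be carried out by hand via the Micallef--White normal form and an equivariant perturbation, exactly as in Lemma 4.3. Controlling this local intersection modulo $n$ is the crux; once it is in place, the mod-$2n$ comparison with the dimension count closes the case.
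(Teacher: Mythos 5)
Your proposal is correct and follows the paper's own proof essentially step for step: the same use of the pointwise-fixed fiber $F_0$ to guarantee the $\s^2$-fibration and the $G$-invariant section $C_0$ even in case (1) of Lemmas 2.3/2.4, the same orbifold dimension count with weights $(1,0)$ at $y_0$ and $(1,n-r^\prime)$ resp.\ $(1,r^\prime)$ at $y_1$, and the same elimination of the configuration $y_1=x_{11}$ via the congruence $C\cdot C_0\equiv 0 \pmod{n}$ (Micallef--White plus equivariant perturbation at the possible shared point on $F_0$, as in Lemma 4.3 and Section 4, Case (ii)) combined with the identity $C_0^2=2(C\cdot C_0)+r^\prime$. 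The only divergence is the sub-case $r^\prime=0$: the paper disposes of it by the same congruence, concluding $r^\prime\neq 0$ so that $F_0,F_1$ are never both fixed, whereas you keep it alive and produce $\tilde{C}$ as the $G$-invariant fiber through $x_{10}$ of the second fibration generated by the square-zero section --- exactly the device the paper itself uses for case (d) in the proof of Lemma 5.5 --- and either disposal yields the lemma.
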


\begin{proof}
We apply Lemma 2.3 or 2.4 to any given $G$-invariant $J$ which equals $\hat{J}_0$ in a neighborhood of $x_{0}$ and $x_{1}$, and note that since $F_0$ is fixed by $G$,
it is automatically $J$-holomorphic, so that the $\s^2$-fibration structure on $X$ always 
exists, and we may consider $C_0$ as a $J$-holomorphic section even in case (1)
of the lemma. 

Let $y_0,y_1$ be the fixed points on $C_0$ where $y_0\in F_0$. Then $y_1=x_{10}$ or
$x_{11}$ if $r^\prime>0$,  and $y_1\in F_1$ if $r^\prime=0$. When $J$ is chosen to be
a generic $G$-invariant almost complex structure, $d\geq 0$ implies that
$$
(a) \;C_0^2=-r^\prime \mbox{ if } y_1=x_{10}, \; (b) \;C_0^2=-n+r^\prime \mbox{ if } y_1=x_{11},
\mbox{ and } (c) \; C_0^2=-n \mbox{ if } r^\prime=0. 
$$
Furthermore, case (b) or (c) can be ruled out by observing $C\cdot C_0=0 \pmod{n}$ as
we argued before in Section 4 so that
$C_0^2=r^\prime \pmod{2n}$, which contradicts the above equations. In particular, 
$r^\prime\neq 0$, so that $F_0,F_1$ can not be both fixed by $G$. The lemma follows by
taking $\tilde{C}=C_0$. 

\end{proof}

The rest of the argument is the same as in case (i), except there is one additional possibility. 
Let $J_k^\prime$ be a sequence of such generic almost complex structures converging to
$J_0$ and $C_k$ be the corresponding $J_k^\prime$-holomorphic curves from 
Lemma 5.6 which converges to a cusp-curve $f_\infty$. Denote by $x_0^{(k)}$, $x_1^{(k)}$
the fixed points on $C_k$. Then in case (i) $x_0^{(k)}=x_{00}$, $x_1^{(k)}=x_{10}$ for
all $k$, however, in the present case, $x_0^{(k)}$ can be any point on $F_0$. In particular,
there is the additional possibility that $x_0^{(k)}$ converges to $x_{01}$ as $k\rightarrow\infty$.
Let $f_\infty: \Sigma_\nu\rightarrow X$ be the corresponding component containing the
limit point $x_{01}$ of $x_0^{(k)}$. Then $(f_\infty)_\ast [\Sigma_\nu]=m_\nu F_0$ for
some $m_\nu>0$ such that $m_\nu=b \pmod{n}$. Since $b=0$ we continue to have 
$m_\nu\geq n$. As in case (i), the existence of the cusp-curve $f_\infty$ is a contradiction. 

The proof of Proposition 5.3 is completed. 

\vspace{3mm}

Proposition 5.3 has an analog for pseudo-free actions, where Lemma 5.5 requires a different 
approach as we have seen in the proof of Theorems 1.1 and 1.2. (Lemma 5.6 is irrelevant in the
case of pseudo-free actions.)
We choose to not repeat it here, and instead we refer to the corresponding results in
Wilczynski \cite{Wil2}. 

\vspace{2mm}

\noindent{\bf Proof of Theorem 1.4}

\vspace{2mm}

Suppose $F_r(a,b)$ and $F_{r^\prime}(a^\prime,b^\prime)$ are orientation-preservingly 
equivariantly diffeomorphic. By Lemmas 5.1 and 5.2, after modifying $F_r(a,b)$ and 
$F_{r^\prime}(a^\prime,b^\prime)$ by a sequence of canonical equivariant diffeomorphisms,
we have either $F_r(a,b)=F_{r^\prime}(a^\prime,b^\prime)$, in which case Theorem 1.4
follows, or $\gcd (a,n)=\gcd (a^\prime,n)=1$ and one of the following occurs:
\begin{itemize}
\item $a^\prime=a$, $b^\prime=b$, and $r^\prime=r-n$, or
\item $a^\prime=b$, $b^\prime=a$ and $r^\prime=r=n$, where $a\neq \pm b$. (Note that the
$G$-actions are pseudo-free in this case.)
\end{itemize}

For pseudo-free actions and when $n>2$, the above possibilities are ruled out by
Lemma 4.9(4),(5) in Wilczynski \cite{Wil2}. For non-pseudo-free actions where $n>2$
or $n=2$ and $r,r^\prime$ are even, Proposition 5.3 can be used.

It remains to examine the case where $n=2$ and either the $G$-actions are pseudo-free or
$r,r^\prime$ are odd. Observe that in this case, with an application of $c_2$ if necessary,
one can always arrange so that $b=b^\prime=1$. Thus as in the proof of Proposition 5.3, 
it suffices to examine 
$F_r(1,1)$ and $F_{r^\prime}(1,1)$ for $(r^\prime,r)=(0,2)$ or $(1,3)$. In the former case,
the two $G$-Hirzebruch surfaces are equivariantly diffeomorphic by $c_6$, and in the
latter case, by $c_3\circ c_6$. This finishes off the proof of Theorem 1.4.

\section{Symplectic $G$-minimality versus smooth $G$-minimality}

We begin by considering minimal rational $G$-surfaces $X$ where $G$ is not necessarily 
a finite cyclic group. Our first lemma shows that we only need to look at the cases where $X$ is a 
conic bundle with singular fibers or a Hirzebruch surface. 

\begin{lemma}
If $X$ is a minimal rational $G$-surface which is not minimal as a topological
$G$-manifold, then $X$ must be a conic bundle with singular fibers or a Hirzebruch surface
$F_r$ with $r>1$ and odd. 
\end{lemma}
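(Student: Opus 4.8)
The plan is to combine the Manin--Iskovskikh structure theory for minimal rational $G$-surfaces with an elementary self-intersection estimate coming from the hypothetical invariant $(-1)$-spheres. Recall from \cite{DI} that a minimal rational $G$-surface $X$ is of exactly one of two types: either $X$ is a del Pezzo surface with $\mathrm{Pic}(X)^G\cong\Z$, or $X$ carries a $G$-equivariant conic bundle structure $\pi:X\rightarrow\P^1$ with $\mathrm{Pic}(X)^G\cong\Z^2$. First I would dispose of the del Pezzo case entirely, and then analyze the conic bundle case to extract the two alternatives in the statement.

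The key homological input is as follows. Since $X$ is not minimal as a topological $G$-manifold, by definition there is a $G$-invariant disjoint union $S_1\sqcup\cdots\sqcup S_m$ of locally linear $(-1)$-spheres, with $m\geq 1$, whose classes can be consistently oriented so as to be preserved by $G$. Setting $E=\sum_{i=1}^m [S_i]\in H^2(X;\Z)$, disjointness gives $[S_i]\cdot[S_j]=0$ for $i\neq j$, whence
\[
E^2=\sum_{i=1}^m [S_i]^2=-m<0,
\]
and preservation of the classes makes $E$ a nonzero $G$-invariant class. Now for a rational surface the first Chern class map is a $G$-equivariant isomorphism $\mathrm{Pic}(X)\cong H^2(X;\Z)$, so $H^2(X;\Q)^G=\mathrm{Pic}(X)^G\otimes\Q$. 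In the del Pezzo case this rationalized invariant group has rank one and is spanned by a class of positive self-intersection (a rational multiple of the ample anticanonical class $-K_X$); hence every nonzero invariant class, $E$ in particular, has strictly positive square. This contradicts $E^2<0$, so $X$ cannot be del Pezzo and must be a conic bundle.

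It remains to treat the conic bundle $\pi:X\rightarrow\P^1$. If $\pi$ has a singular fiber, we are in the first alternative of the statement and there is nothing more to prove. Otherwise $\pi$ is a holomorphic $\P^1$-bundle over $\P^1$, that is, a Hirzebruch surface $F_r$, and I would pin down $r$ using two observations. First, $r$ must be odd: if $r$ were even then $F_r$ is diffeomorphic to $\s^2\times\s^2$, whose intersection form is even and therefore represents no class of square $-1$, so $X$ would contain no $(-1)$-sphere at all and would be topologically $G$-minimal, contrary to hypothesis. Second, $r\neq 1$, since $F_1$ contains a unique curve of self-intersection $-1$, which is therefore $G$-invariant and can be blown down equivariantly, contradicting the $G$-minimality of $X$ as a rational surface. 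Together these give $r>1$ and odd, completing the analysis.

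The step I expect to require the most care is the extraction and use of the invariant class $E$: namely verifying that the ``preserved homology classes'' hypothesis really produces a nonzero $G$-invariant class with negative square, and that $H^2(X;\Q)^G$ may be identified with $\mathrm{Pic}(X)^G\otimes\Q$ so that ampleness of $-K_X$ forces positivity of the square in the del Pezzo case. Once this is in place, the del Pezzo case is ruled out cleanly, and the parity and minimality checks disposing of the even and $r=1$ Hirzebruch surfaces are routine.
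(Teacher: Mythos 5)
Your proposal is correct and follows essentially the same route as the paper: both extract the $G$-invariant class $E=\sum_i [S_i]$ with $E^2<0$ from the hypothetical invariant family of $(-1)$-spheres and then invoke Theorem 3.8 of \cite{DI} to force the conic-bundle/Hirzebruch alternative. The only cosmetic differences are that you exclude the del Pezzo case via positivity of the square of the invariant generator (a rational multiple of $-K_X$), whereas the paper pairs $E$ with the invariant K\"ahler class to show $\mathrm{Pic}(X)^G$ has rank exactly $2$, and you spell out the endgame (even $r$ gives an even intersection form with no $(-1)$-classes; $F_1$'s unique $(-1)$-curve is invariant, contradicting $G$-minimality) that the paper compresses into ``the lemma follows easily.''
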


\begin{proof}

Our first observation is that the dimension of $H^2(X;\R)^G$ is at least $2$. To see this, note that 
$X$ has a $G$-invariant K\"{a}hler form $\omega_0$ with $[\omega_0]\in H^2(X;\R)^G$ and 
$[\omega_0]^2>0$, and on the other hand, the class $E$ of the union of $(-1)$-spheres along 
which $X$ is blown down also lies in $H^2(X;\R)^G$ and $E^2<0$. It follows that $[\omega_0]$ 
and $E$ are linearly independent, and consequently, the dimension of $H^2(X;\R)^G$ is at least 
$2$. With this understood, we recall the fact that for any minimal rational $G$-surfaces $X$, 
$Pic(X)^G$ has rank at most $2$, cf. \cite{DI}, Theorem 3.8. It follows that $Pic(X)^G$ must 
have rank $2$. By Theorem 3.8 in \cite{DI} again, $X$ must be either a conic bundle with singular
fibers, or a Hirzebruch surface. Finally, we note that if $X$ can be blown down $G$-equivariantly,  
the underlying manifold $X$ must be topologically non-minimal. The lemma follows easily. 

\end{proof}

Now we specialize to the case where $G=\Z_n$ is a finite cyclic group. First, consider the case when
$X$ is a conic bundle with singular fibers. An element $\mu\in G$ which is a generator is called a
de Jonqui\'{e}res element, and such elements have been classified by Blanc \cite{B}. In particular,
$n=2m$ must be even, and $\tau=\mu^m$ is a de Jonqui\'{e}res involution, which has the following 
properties: The involution $\tau$ leaves each fiber of the conic bundle invariant, switches the two 
irreducible components in each singular fiber, and the fixed-point set of $\tau$ is an irreducible 
smooth bisection $\Sigma$ with a hyperelliptic involution, such that the conic bundle projection 
defines the quotient map with ramification points equal to the singular points of the fibers. The de Jonqui\'{e}res element $\mu$ induces a permutation of the set of singular fibers. It follows easily 
that $X$ as a $\langle \tau\rangle$-surface is also minimal.

\begin{proposition}
Let $X$ be a minimal $G$-conic bundle with singular fibers where $G=\Z_n$. 
Then $X$ is minimal as a topological $G$-manifold. 
\end{proposition}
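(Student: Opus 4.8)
The plan is to argue by contradiction: suppose $X$ carries a $G$-invariant disjoint union of locally linear $(-1)$-spheres, and exploit the de Jonqui\'{e}res involution $\tau=\mu^m$ together with its fixed bisection $\Sigma$. The first step is to reduce to a single $\tau$-invariant sphere. Since by our standing convention the homology classes of the spheres are preserved by $G$, if $S$ is one of them and $g\in G$ satisfies $gS\neq S$, then $gS$ and $S$ are disjoint members of the collection, so $[S]^2=[gS]\cdot[S]=0$, contradicting $[S]^2=-1$. Hence every such $S$ is $G$-invariant, and in particular $\tau$-invariant, with $\tau_\ast[S]=[S]$.

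The heart of the argument is a lattice-parity obstruction. Because $\tau$ leaves every fiber invariant and switches the two components $C_i,C_i'$ of each singular fiber, one has $f=C_i+C_i'$ and $\tau_\ast C_i=C_i'$; combined with $\tau_\ast[S]=[S]$ this yields $[S]\cdot f=[S]\cdot C_i+[S]\cdot C_i'=2\,([S]\cdot C_i)$, so $[S]\cdot f$ is even (here the presence of a singular fiber is used). Next I would invoke that the $\tau$-invariant subspace $H^2(X;\Q)^\tau$ is $2$-dimensional, spanned by $f$ and $\Sigma$ (the $-1$-eigenspace of $\tau^\ast$ being generated by the differences $C_i-C_i'$), so that $[S]=\alpha f+\beta\Sigma$ with $\alpha,\beta\in\Q$. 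From $[S]\cdot f=2\beta$ even we obtain $\beta\in\Z$, whence $\alpha f=[S]-\beta\Sigma$ is an integral class; since the conic-bundle fiber $f$ is primitive, this forces $\alpha\in\Z$ as well, so $[S]$ lies in the integral sublattice $\Z f\oplus\Z\Sigma$.

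It then remains to compute the intersection form on this sublattice. Here $f^2=0$ and $f\cdot\Sigma=2$ are immediate, and the decisive point is that $\Sigma^2$ is even: applying adjunction to the smooth curve $\Sigma$ (using $K_X\cdot f=-2$, $\Sigma\cdot f=2$, and $K_X\in\langle f,\Sigma\rangle_\Q$ since $K_X$ is $\tau$-invariant) together with the numerology $K_X^2=8-k$ and $k=2g+2$ of a conic bundle with $k$ singular fibers whose fixed curve $\Sigma\to\P^1$ is a double cover of genus $g$ branched over the $k$ nodes, yields $\Sigma^2=2g+2$. Thus the form on $\Z f\oplus\Z\Sigma$ is even, so $[S]^2=4\alpha\beta+\beta^2\Sigma^2$ is even for all $\alpha,\beta\in\Z$. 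This contradicts $[S]^2=-1$, and hence no $G$-invariant locally linear $(-1)$-sphere can exist, proving $X$ minimal as a topological $G$-manifold.

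I expect the main obstacle to be the integrality and parity bookkeeping of the middle step: one must check that $[S]$ genuinely lands in the \emph{integral} lattice $\Z f\oplus\Z\Sigma$ rather than merely its rational span, which rests on the primitivity of $f$ and the evenness of $[S]\cdot f$, and that $\Sigma^2$ is even. The primitivity of $f$ and the computation $\Sigma^2=2g+2$ are precisely where the specific geometry of the de Jonqui\'{e}res action — the component-swapping of $\tau$ and the branch structure of $\Sigma\to\P^1$ — has to be used with care; once the invariant lattice is shown to be even, the contradiction with $[S]^2=-1$ follows at once.
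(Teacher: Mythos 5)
Your first step is where the argument breaks, and it is a genuine gap rather than a bookkeeping issue. The standing convention from the introduction says the collection of spheres can be consistently oriented so that the $G$-action preserves the resulting \emph{set} of homology classes, i.e.\ $g_\ast[S]=[gS]$; it does not say that each individual class is fixed, $[gS]=[S]$. So from $gS\neq S$ and disjointness you only get $[gS]\cdot[S]=0$ with $[gS]$ a priori a different class, and no contradiction with $[S]^2=-1$ follows. Indeed, orbits of pairwise disjoint spheres genuinely permuted by $G$ are precisely the typical configuration one must rule out (compare the paper's proof of Theorem 1.0, where a free $G$-orbit of disjoint exceptional curves is produced and equivariantly blown down); under your reading of the convention, equivariant blowdown along such an orbit would be excluded altogether, which cannot be the intent. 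Consequently your proof is missing the essential case: a pair of disjoint locally linear $(-1)$-spheres $E_1$ and $E_2=\tau E_1$ interchanged by the de Jonqui\'{e}res involution. Your parity obstruction cannot touch this case, because the invariant class is $E=[E_1]+[E_2]$ with $E^2=-2$, which \emph{is} even, so the evenness of the invariant lattice spanned by the fiber class and $\Sigma$ yields no contradiction.

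For what it is worth, the part you did prove is correct and even has independent value: your Lefschetz/eigenspace computation of $H^2(X;\Q)^\tau$, the evenness of $[S]\cdot F$ for a $\tau$-invariant class, the integrality via primitivity of the fiber class, and $\Sigma^2=2+2g$ (the paper's Lemma 6.3) together show the invariant lattice is even, which rules out a single $\tau$-invariant $(-1)$-sphere homologically; the paper instead rules out that subcase geometrically, by a rotation-number argument showing an invariant locally linear $(-1)$-sphere would force an isolated fixed point of $\tau$, impossible since $\mathrm{Fix}(\tau)=\Sigma$ is a curve. But the paper's real work is the swapped-pair case, which your outline never reaches: there one observes $E_1\cap\Sigma=\emptyset$ (a point of $E_1\cap\Sigma$ is $\tau$-fixed, hence lies on $E_2$, contradicting disjointness), so $E\cdot\Sigma=0$; writing $E=a\Sigma+bF$ with $a,b\in\Q$, the relations $E\cdot\Sigma=0$ and $E^2=-2$ give $a^2(1+g)=1$, while the evenness $F\cdot E=2a\equiv 0\pmod 2$ forces $a\in\Z$, contradicting $g\geq 1$. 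Note that $g\geq 1$ comes from minimality via $k=2+2g\geq 4$ singular fibers (\cite{DI}, Lemma 5.1), and that for $g=0$ the equation $a^2(1+g)=1$ has integral solutions — so this case cannot be dispatched by any parity argument alone; the Diophantine step, and the place where minimality of the $G$-conic bundle actually enters, are both absent from your proposal.
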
 

\begin{proof}

By the description of de Jonqui\'{e}res elements in \cite{B}, it suffices to consider only the case 
$G=\Z_2$ generated by a de Jonqui\'{e}res involution $\tau$. Let $\Sigma$ be the fixed-point set
of $\tau$, and let $k$ be the number of singular fibers. Then $k=2+2g$ where $g$ is the genus 
of $\Sigma$. Note that $k\geq 4$ (cf. \cite{DI}, Lemma 5.1), which implies that $g\geq 1$. Let 
$F$ denote the fiber class of the conic bundle. Then $F$ and $\Sigma$ span $H^2(X;\Q)^G$ 
over $\Q$ as $F\in Pic(X)^G$. We determine in the next lemma the intersection form of $F,\Sigma$.

\begin{lemma}
$F^2=0$, $\Sigma\cdot F=2$, and $\Sigma\cdot \Sigma=2+2g$, where $g\geq 1$ is the 
genus of $\Sigma$. 
\end{lemma}

\begin{proof}

It is clear that $F^2=0$ and $\Sigma\cdot F=2$ (note that $\Sigma$ is a bisection). We shall 
prove that $\Sigma \cdot \Sigma=2+2g$. To see this, let $K_X$ be the canonical class of the 
rational surface $X$. Then $K_X\in Pic(X)^G$ implies that $K_X$ is a linear combination of 
$\Sigma$ and $F$. With $K_X \cdot F=-2$ (by the adjunction formula), $\Sigma \cdot F=2$ 
and $F^2=0$, it follows easily that $K_X=-\Sigma +l\cdot F$ for some $l\in \Z$. 

Now the adjunction formula $K_X \cdot \Sigma +\Sigma\cdot \Sigma +2=2g$
gives $2l+2=2g$, which implies that $l=g-1$. 

To compute $\Sigma \cdot \Sigma$, note that
$$
\Sigma\cdot \Sigma -4l =K_X^2=8-k=8-(2+2g)=6-2g,
$$
which gives $\Sigma\cdot \Sigma = 6-2g+4l=6-2g+4(g-1)=2+2g$.

\end{proof}

Secondly, we show that $X$ contains no $G$-invariant locally linear $(-1)$-spheres. 
Suppose to the contrary that there is such a $(-1)$-sphere $C$. Obviously
the action of $G$ on $C$ is effective and orientation-preserving, so that $C$ contains
exactly two fixed points of $G$. Let $m_1,m_2$ be the weights of the $G$-action in the
normal direction of $C$ at the two fixed points. Then $m_1,m_2$ obey the following
equation: $m_1+m_2\equiv -1 \pmod{2}$. This implies that exactly one of $m_1,m_2$
equals $0$ mod $2$, and consequently, the $G$-action has an isolated fixed-point, which is
a contradiction. Hence the claim.

As a corollary, if $X$ is not minimal as a topological $G$-manifold, there must be two disjoint 
$(-1)$-spheres which are disjoint from the fixed-point set $\Sigma$, such that the $\Z_2$-action permutes the two $(-1)$-spheres. Denote their classes by $E_1$ and $E_2$ and let 
$E=E_1+E_2$. Then $E$ satisfies the following conditions:
$$
E\in H^2(X;\Z)^G, E\cdot \Sigma=0, E^2=-2, \mbox{ and } F\cdot E=0 \pmod{2}.
$$
To see the last condition, let $\tau\in G$ be the involution. Then 
$$
F\cdot E=F\cdot (E_1+E_2)=F\cdot (E_1+\tau(E_1))=F\cdot E_1+\tau(F)\cdot E_1=2F\cdot E_1
=0\pmod{2}.
$$
We write $E=a\Sigma +b F$ for some $a,b\in\Q$. Then $E\cdot \Sigma=0$
gives $a\Sigma\cdot\Sigma+bF\cdot \Sigma=0$, which, with $\Sigma\cdot\Sigma=2+2g$,
implies $b=-a(1+g)$. Now $E^2=-2$ means 
$$
(2+2g)a^2+4ab=-2,
$$
which gives $a^2(1+g)=1$. Finally, with $F\cdot E=0 \pmod{2}$, we get $2a=F\cdot E=2k$ for
some $k\in\Z$, so that $a\in \Z$. But this contradicts $a^2(1+g)=1$ as $g\geq 1$. The proof of
Proposition 6.2 is completed. 

\end{proof}

It is clear that Theorem 1.5 follows from Proposition 6.2 and Theorem 1.2. 

\vspace{3mm}

We end this section with a proof of Theorem 1.0.

\vspace{3mm}

\noindent{\bf Proof of Theorem 1.0}

\vspace{3mm}

We shall first consider the case where $X$ is symplectic. Let $\omega$ be a symplectic form on $X$ which is $G$-invariant. We shall prove that if $X$ is not minimal as a smooth four-manifold, then for any $G$-invariant, $\omega$-compatible almost complex structure $J$, there is a $G$-invariant set of disjoint union of $J$-holomorphic $(-1)$-spheres in $X$. The theorem follows easily from this when $X$ is either complex K\"{a}hler or symplectic.

According to Lemma 2.3 in \cite{C4}, since $X$ is neither rational nor ruled, $(X, \omega)$ has the 
following properties: for any $\omega$-compatible almost complex structure $J$, 
(i) $X$ contains a $J$-holomorphic $(-1)$-sphere provided that $X$ is not minimal ,
(ii) either $c_1(K_X)$ or $c_1(2K_X)$ is represented by a finite set of $J$-holomorphic curves
(for (ii) see the proof of Lemma 2.3 in \cite{C4}). With this understood, we now assume that $J$ is 
$G$-invariant. Let $C$ be a $J$-holomorphic $(-1)$-sphere in $X$, which exists by (i). Then for 
any $g\in G$, $g(C)$ is also a $J$-holomorphic $(-1)$-sphere. The proposition follows 
easily in this case if for any $g\in G$, either $g(C)=C$ or $g(C)$ is disjoint from $C$. Suppose to the contrary, there is a $g\in G$ such that $g(C)$ and $C$ are distinct with nonempty intersection.  Then by a standard gluing argument (cf. \cite{S}), one can produce an irreducible $J$-holomorphic 
curve $C^\prime$ such that $C^\prime=C+g(C)$. Now observe that the self-intersection 
$$
C^\prime\cdot C^\prime= C^2+2 C\cdot g(C) +g(C)^2\geq -1+2\cdot 1+(-1)=0.
$$
On the other hand, $c_1(K_X)\cdot C^\prime= c_1(K_X) \cdot (C+g(C))=-2$. But this clearly contradicts the fact that either $c_1(K_X)$ or $c_1(2K_X)$ is represented by a finite set of 
$J$-holomorphic curves because $C^\prime\cdot C^\prime\geq 0$. 

It remains to consider the case where $X$ is a non-K\"{a}hler complex surface. In this case, 
since $b_2^{+}\geq 1$, $X$ must be an elliptic surface, and if $X$ is minimal as a complex 
surface, the elliptic fibration on $X$ must be either nonsingular or have at most multiple fibers 
(cf. \cite{FM}, Theorem 7.7). Furthermore, the condition $b_2^{+}\geq 1$ implies that the base 
curve of the elliptic fibration has non-zero genus. It follows easily that $X$ is a minimal complex surface if and only if $X$ is minimal as a smooth four-manifold. Consequently, if $X$ is not 
minimal as a smooth four-manifold, the elliptic fibration must be non-minimal and all the 
exceptional curves in $X$ are contained in the fibers. Observe that any two exceptional curves 
in the same fiber must be disjoint. This implies that the exceptional curves in $X$ are all disjoint. 
Any $G$-orbit of an exceptional curve in $X$ gives rise to a $G$-invariant set of disjoint union of exceptional curves, hence $X$ is non-minimal as a complex $G$-surface. This finishes off the proof of the theorem.

\vspace{2mm}

{\Small University of Massachusetts, Amherst.\\
{\it E-mail:} wchen@math.umass.edu

\end{document}